\title{On the list chromatic index of graphs of tree-width 3 and maximum degree at least 7}
\author{Richard Lang\\ Universidad de Chile,\\ 
Santiago, Chile
 \\ \small rlang@dim.uchile.cl }
\theoremstyle{plain}
\numberwithin{equation}{section}
\numberwithin{figure}{section}
\newtheorem{conjecture}{Conjecture}
\newtheorem{theorem}{Theorem}
\newtheorem{lemma}{Lemma}[section]
\theoremstyle{definition}
\newtheorem{definition}{Definition}
\newcommand{\eb}[2]{v_{#1}w_{#2}}
\newcommand{\egen}[2]{{#1}{#2}}
\newcommand{\elgen}[2]{L(\egen{#1}{#2})}
\newcommand{\ch}{\mathrm{ch}}
\newcommand{\sm}{\setminus}
\begin{document}
\maketitle

\begin{abstract}
  Among other results, it is shown that 3-trees are $\Delta$-edge-choosable and that graphs of tree-width 3 and maximum
  degree at least 7 are $\Delta$-edge-choosable.
\end{abstract}


\section{Introduction}
\label{sec:introduction}
In this paper we analyse the list chromatic index, $\ch'(G),$ of simple graphs $G$ of tree-width 3 and high maximum degree.
Tree-width and path-width are in some sense measures of how much a graph resembles a tree and a path respectively (see Section
\ref{sec:configurations} for a proper definition). Our main
results are:
\begin{theorem}
\label{thm:simple-results}
 Let $G$ be graph with maximum degree $\Delta$. It holds that ${\ch'(G)=\Delta}$ if $G$ has
\begin{enumerate}
 \item \label{itm:simple-results-1} tree-width at most 3 and $\Delta \geq 7,$
 \item \label{itm:simple-results-2} path-width at most 3 and $\Delta \geq 6$ or
 \item \label{itm:simple-results-3} path-width at most 4 and $\Delta \geq 10$.
\end{enumerate}
\end{theorem}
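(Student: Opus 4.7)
The lower bound $\ch'(G) \ge \Delta$ is immediate, so all three parts reduce to proving the upper bound $\ch'(G) \le \Delta$. I would attack this by a minimum counterexample argument: suppose $G$ is a graph of minimum size violating one of (1)--(3), and fix a list assignment $L$ with $|L(e)|=\Delta$ for every edge, admitting no proper $L$-edge-colouring. I may assume $G$ is edge-maximal in its class (adding edges does not decrease $\Delta$ only once care is taken, so this step is where extra care is needed), and in particular that $G$ is a 3-tree in case (1), since subgraphs of 3-trees have tree-width at most 3 and any omitted chord inside a triangle face could be added without increasing $\Delta$; analogous reductions hold for path-width.

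The proof then splits into two halves that have to be calibrated against each other. The first half is a \emph{reducibility} catalogue: a list of small local configurations (a low-degree vertex together with its neighbours and their degrees) with the property that if $G$ contains such a configuration $H$, one can delete a controlled set of edges of $H$, use the minimality of $G$ to obtain a proper $L$-edge-colouring of $G - E(H)$, and then extend it to the deleted edges. The extension step is typically carried out either by counting forbidden colours and using Hall's theorem on a bipartite list-graph, by a kernel argument on an auxiliary digraph, or by local Kempe swaps; each such argument succeeds only when $\Delta$ exceeds some configuration-dependent threshold, which is what controls the numbers $7$, $6$ and $10$.

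The second half is the \emph{structural} statement: every 3-tree with $\Delta \ge 7$ (respectively every graph of the path-width hypothesis of (2) or (3)) contains at least one configuration from the catalogue. For case (1) I would exploit the simplex-tree structure of 3-trees, ordering vertices so that each new vertex is added in a triangular face and tracking the degrees of faces; a counting argument on the tree of tetrahedra should force the existence of a low-degree vertex whose neighbourhood realises one of the configurations. Cases (2) and (3) should be handled by the same template, replacing the tree of tetrahedra by a \emph{path} of tetrahedra (or 4-cliques), which gives much stronger local information and explains why the $\Delta$-bound can be relaxed in (2) but unexpectedly tightens in (3) because the path of $4$-cliques has more internal edges per vertex.

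The main obstacle will be the joint tuning of the two halves. A configuration that is easy to reduce tends to be rare, forcing a weaker $\Delta$-threshold in the structural lemma; a configuration that is abundant tends to demand a large list, raising $\Delta$. Pinpointing a catalogue minimal enough to always be present in a 3-tree while rich enough to be extendable when $\Delta \ge 7$ is the delicate part, and I expect the worst cases to be configurations in which a degree-2 or degree-3 vertex is surrounded by three vertices of maximum degree sharing many common edges, where the extension argument must be an actual kernel or Hall argument rather than simple greedy colouring.
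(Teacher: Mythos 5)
Your overall architecture --- minimum counterexample, a structural lemma guaranteeing a reducible local configuration, and extension lemmas of Hall/kernel type on the bipartite graph between the deleted low-degree vertices and their neighbourhood --- is exactly the paper's plan, and your instinct about where the hard cases live (a few degree-2 or degree-3 vertices all attached to one vertex of maximum degree) is accurate. But one step is invalid as stated: the reduction to edge-maximal graphs. Completing a graph of tree-width $3$ to a $3$-tree in general increases the maximum degree, so lists of size $\Delta(G)$ become too small for the completed graph, and you cannot in general add the missing chords of a bag without pushing some degree past $\Delta$. The paper never makes this reduction; it extracts the configuration directly from a rooted width-$3$ tree decomposition of the possibly non-maximal graph (Lemma~\ref{lem:tree-width-k-configurations}), using only the consequence of minimality that $\deg(v)+\deg(w)\ge\max(l,\deg v,\deg w)+2$ holds on every edge. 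Your ``tree of tetrahedra'' counting can be salvaged in that setting, but not by assuming every edge inside a bag is present.

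The second, more serious gap is in part (3): the template ``delete the configuration, colour the rest by minimality, extend'' fails outright there. Path-width $4$ with $\Delta\ge 10$ forces, in the worst case, a configuration consisting of one degree-$4$ vertex and three degree-$2$ vertices attached to a common bag (Figure~\ref{fig:pw4-4222}) whose associated bipartite extension instance is provably \emph{not} colourable from the lists of remaining colours, no matter how the kernel or Hall argument is run --- Figure~\ref{fig:pw4-problem} exhibits the obstruction. The missing idea is to constrain the colouring of $G-W$ in advance: the paper replaces the deleted vertices by two new vertices joined to the whole bag, colours this auxiliary graph $G^*$ (still of path-width $4$, no larger maximum degree, smaller size) by minimality, and transfers that colouring back so that the leftover instance becomes solvable (Lemma~\ref{lem:pw4-ins}). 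Without something of this kind your proof of part (3) cannot close; parts (1) and (2), by contrast, would go through along your lines once the edge-maximality assumption is dropped.
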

A 3-tree is an edge-maximal graph of tree-width 3.
 \begin{theorem}
\label{thm:3-trees}
 Let $G$ be a 3-tree with chromatic index $\chi'(G);$ then $\ch'(G) = \chi'(G)$.
\end{theorem}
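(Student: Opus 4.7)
The lower bound $\ch'(G) \geq \chi'(G)$ is immediate, so the content is the reverse inequality $\ch'(G) \leq \chi'(G)$. I would split on $\Delta := \Delta(G)$: when $\Delta \geq 7$, Theorem~\ref{thm:simple-results}(\ref{itm:simple-results-1}) gives $\ch'(G) = \Delta$, and since $\Delta \leq \chi'(G) \leq \ch'(G)$ holds in every graph, this forces $\chi'(G) = \Delta = \ch'(G)$. All the real work therefore lies in the range $\Delta \leq 6$, which I would attack by induction on $|V(G)|$ with base case $G = K_4$ (where $\ch'(K_4) = 3 = \chi'(K_4)$ is easy to check directly).

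For the inductive step, let $v$ be a simplicial vertex of degree $3$ in $G$, whose existence is guaranteed by the recursive construction of a 3-tree; write $N(v) = \{x_1, x_2, x_3\}$, which induces a triangle, and set $G' := G - v$. Then $G'$ is a strictly smaller 3-tree with $\chi'(G') \leq \chi'(G) =: k$, so by induction $\ch'(G') \leq \chi'(G') \leq k$. Given a list assignment $L$ on $E(G)$ with $|L(e)| = k$, applying the inductive hypothesis to $G'$ together with the restricted lists produces a proper list edge-colouring $c'$ of $G'$. It remains to extend $c'$ to the three edges at $v$: setting $A_i := L(vx_i) \sm \{c'(f) : f \text{ is incident to } x_i \text{ in } G'\}$, one has $|A_i| \geq k - (\deg_G(x_i) - 1) \geq 1$, and whenever the family $\{A_1, A_2, A_3\}$ satisfies Hall's condition, a system of distinct representatives completes the colouring.

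The hard case is when Hall's condition fails. This can happen only when all three $x_i$ have degree close to $\Delta$ and the three residual lists $A_i$ collapse onto a tiny common palette, which in turn forces rigid constraints on $c'$ near the triangle $x_1 x_2 x_3$. The plan is to modify $c'$ by a Kempe-chain swap in $G'$: pick colours $a, b$ such that exchanging them along the $(a,b)$-bichromatic component through a suitable $x_i$ enlarges $A_i$ without destroying extendability at the other two neighbours. I expect the structural rigidity of 3-trees of small maximum degree --- namely that the neighbourhood triangle of $v$ in a chordal graph of clique number $4$ admits only a bounded list of local configurations --- to reduce the remaining analysis to a short case distinction on the degree sequence $(\deg x_1, \deg x_2, \deg x_3)$ and on the colour pattern on the edges $x_ix_j$. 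This Kempe-swap case analysis is where I expect the main technical obstacle to lie.
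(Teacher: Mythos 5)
Your reduction of the case $\Delta\geq 7$ to Theorem~\ref{thm:simple-results}(\ref{itm:simple-results-1}) is exactly what the paper does and is fine. The gap is in the case $\Delta\leq 6$, and it is a genuine one: the step you yourself flag as the ``main technical obstacle'' --- repairing a failed Hall condition by a Kempe-chain swap in $G'$ --- is not available in the list setting. A $(a,b)$-bichromatic path in $G'$ may pass through edges whose lists contain $a$ but not $b$ (or vice versa), so swapping the two colours along the chain need not produce an $L$-edge-colouring of $G'$; this is precisely the reason Vizing's fan/chain machinery does not transfer to list edge-colouring and why the list colouring conjecture is open. Note also that the failure of Hall's condition is not a rare corner case you can wave away by ``rigidity'': already for the $5$-vertex $3$-tree of Figure~\ref{fig:counterexample} one has $k=\chi'=5$, each $A_i$ is only guaranteed two colours, and an adversarial list assignment can make $A_1=A_2=A_3$ a common $2$-set. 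So as written the proposal proves the theorem only for $\Delta\geq 7$.

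The paper avoids the single-simplicial-vertex extension problem entirely for $\Delta\leq 6$. It first proves (Lemma~\ref{lem:small-3-trees}) that every $3$-tree of maximum degree at most $6$ either has path-width $3$ or is one of nine explicit graphs (Figure~\ref{fig:small-3-trees}). The path-width-$3$ graphs are handled by Lemma~\ref{lem:path-width-3-delta-6}, which deletes not one vertex but a whole set $W$ of low-degree vertices attached to a separator of size at most $3$, and extends the colouring using bipartite list-colouring results in the spirit of Galvin and Borodin--Kostochka--Woodall (where the extension problem is genuinely solvable, unlike the three-singleton-lists situation at a simplicial vertex); the nine exceptional graphs are coloured individually via the compatible-colours lemma (Lemma~\ref{lem:cariolaro}) and Galvin's theorem. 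If you want to salvage your induction, you would need to replace the Kempe-swap step by an argument that recolours $G'$ within the lists, or else delete a larger substructure than a single simplicial vertex --- which is in effect what the paper does.
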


\subsection{The list colouring conjecture}

 List colourings are a generalisation of colourings introduced independently by 
 Vizing \cite{vizing76} and Erd\H{o}s, Rubin and Taylor \cite{ErRuTa79} in the seventies.
 An \emph{instance of list edge-colouring} consists of a graph $G$ and an \emph{assignment of lists} 
 $L:E(G) \rightarrow \mathcal{P}(\mathbb{N})$ that maps the edges of $G$
  to \emph{lists of colours} $L(e)$. 
 A function $\mathcal{C}:E(G) \rightarrow \mathbb{N}$ is called an
 \emph{$L$-edge-colouring} of $G,$ if $\mathcal{C}(e) \in L(e)$ for each $e \in E(G)$ and no two adjacent edges receive the same
 colour. $G$ is said to be \emph{$k$-edge-choosable}, if for each assignment of lists $L,$ where each list has
 a size of at least $k,$ there is an $L$-edge-colouring of $G$. The \emph{list chromatic index}, denoted by $\ch'(G),$ 
 is the smallest integer $k$
 for which $G$ is $k$-edge-choosable. The following conjecture is one of the central open
 problems in the field of list colouring.
 \begin{conjecture}[List edge-colouring conjecture]
  \label{con:list-colouring-conjecture}
  For all graphs $G$ it holds that $\ch'(G) = \chi'(G)$.
 \end{conjecture}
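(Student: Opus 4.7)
The plan must begin by acknowledging that Conjecture~\ref{con:list-colouring-conjecture} is one of the most intractable open problems in chromatic graph theory and has resisted every general attempt for more than four decades; a complete solution is beyond any currently available technique. What I would propose instead is a realistic roadmap combining the two main existing attacks, together with an honest identification of where each one breaks down.

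First I would pursue the probabilistic route initiated by Kahn, who established the asymptotic statement $\ch'(G) \leq (1+o(1))\chi'(G)$. The goal here is to drive the slack in the semi-random (R\"odl nibble) construction from $o(\Delta)$ down to zero. Concretely, I would iterate the nibble together with an absorbing or entropy-compression argument, exploiting the fact that after a greedy assignment of most edges one is typically left with a sparse uncoloured subgraph to which Galvin's theorem on bipartite-like structures can be applied. The difficulty is that each nibble round loses a constant fraction of the colours available on each list, so closing the final gap requires list-size hypotheses strictly stronger than $\chi'(G)$.

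Second, and in parallel, I would pursue the structural/inductive approach taken throughout the present paper. For a minimum counterexample $G$ of Class~1, Vizing-type adjacency lemmas produce reducible configurations that extend to the list setting via the kernel method of Borodin--Kostochka--Woodall. One would aim for a universal reducibility lemma asserting that every graph contains such a configuration unless it is extremely dense, and then handle the dense case by a separate probabilistic colouring on its expander-like skeleton. This is in essence a vast generalisation of the strategy that already works for tree-width $3$ and maximum degree at least $7$.

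The main obstacle, and the reason the conjecture remains open, is the Class~2 case $\chi'(G) = \Delta + 1$: overfull subgraphs can be hidden anywhere in $G$, so there is no global structural lever; and at the same time the edge chromatic number leaves no room for probabilistic slack. Any viable complete proof must therefore marry the structural and probabilistic approaches in a way that no current technique supports. Until such a synthesis is found, the most productive contribution --- and the one the present paper makes via Theorems~\ref{thm:simple-results} and~\ref{thm:3-trees} --- is to settle the conjecture in increasingly broad special classes, gradually narrowing the set of possible minimum counterexamples.
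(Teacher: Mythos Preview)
Your assessment is correct: Conjecture~\ref{con:list-colouring-conjecture} is stated in the paper as an open problem, not as a result to be proved, and the paper makes no attempt to prove it in general. There is therefore no ``paper's own proof'' to compare against; the paper's contribution is precisely to verify the conjecture for the special classes in Theorems~\ref{thm:simple-results} and~\ref{thm:3-trees}, as you note in your final paragraph.

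Your roadmap is a reasonable survey of the known angles of attack, and your identification of the Class~2 case as the essential obstruction is accurate. One small caveat: the structural/inductive approach you describe in your second paragraph does not obviously require the graph to be Class~1, and indeed the paper's own results (via Theorem~\ref{thm:nishizeki}) sidestep the Class~1/Class~2 dichotomy by restricting to tree-width~3 with $\Delta\geq 6$, where $\chi'(G)=\Delta(G)$ is already known. The genuine difficulty you point to is not so much ``hidden overfull subgraphs'' per se, but rather that for dense graphs with no exploitable sparse substructure, no reducible configuration of bounded size is known to exist, and the probabilistic slack is exactly zero when the target is $\chi'(G)$ rather than $\chi'(G)+o(\Delta)$.
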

 While still open in general, Conjecture~\ref{con:list-colouring-conjecture} has been verified asymptotically by Kahn~\cite{journals/jct/Kahn96} and also
 for some particular families of graphs: Galvin  
 proved that $\ch'(G) = \Delta(G)$ for all bipartite graphs $G$ \cite{journals/jct/Galvin95}. Borodin et al. used this to show that 
 $\ch'(G) = \Delta(G)$, if the maximum average degree of $G$ is at most $\sqrt{2\Delta}$~\cite{journals/jct/BorodinKW97}.
 Ellingham and Goddyn \cite{journals/combinatorica/EllinghamG96} used a method of Alon and Tarsi to show that
 that every $d$-regular planar graph is $d$-edge-choosable.
 In 1999 Juvan, Mohar and Thomas showed that
 series-parallel graphs  are $\Delta$-edge-choosable~\cite{Juvan99listedge-colorings}. This family can also be characterised in terms of tree-width. 
 Series-parallel graphs are exactly the graphs of tree-width at most~2. Bruhn and Meeks suggested that 
 similar ideas could be applied to graphs of tree-width 3 and verified the list colouring conjecture for graphs of path-width at most 3 
 and maximum degree at least 6 [personal communication in 2012].

 \begin{figure}
\centering
\tikzstyle{vertex}=[circle,draw,inner sep=2pt,font=\tiny]
\tikzstyle{edge} = [draw,-]
\tikzstyle{weight} = [font=\small]
 \begin{tikzpicture}[scale=0.4]

    \foreach \pos/\name in {{(1,1)/v_3}, {(-2,-1)/v_2}, {(4,-1)/v_4},
                            {(1,4)/v_1},{(0,1)/v_5}}
       \node[vertex, align=center] (\name) at \pos {};
    \foreach \source/ \dest /\weight in {v_1/v_2/2, v_1/v_3/3, v_1/v_4/3, v_1/v_5/2, 
                                         v_2/v_3/3, v_2/v_4/3, v_2/v_5/3,
                                         v_3/v_4/2, v_3/v_5/2}
       \path[edge] (\source) -- node[weight] {} (\dest);

    \foreach \vertex  in {}
        \path node[selected vertex] at (\vertex) {};

 \end{tikzpicture}
\caption{A graph of tree-width 3 and chromatic index 5.}
\label{fig:counterexample}
\end{figure}
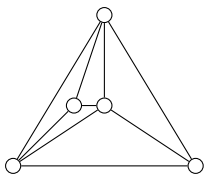
 The list colouring conjecture has been verified mostly for classes of graphs whose elements have chromatic index $\Delta(G)$.  
 But this does not hold for graphs of tree-width 3, see Figure~\ref{fig:counterexample}. 
 However, Nakano, Nishizeki and Zhou~\cite{DBLP:journals/jal/ZhouNN96} provided  the following result. 
 \begin{theorem}
  \label{thm:nishizeki}
  Let $G$ be a graph of tree-width $k$ and $\Delta(G) \geq 2  k$; then ${\chi'(G)=\Delta(G)}$.
 \end{theorem}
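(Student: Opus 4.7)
The approach is a proof by contradiction via a minimal counterexample and Vizing's Adjacency Lemma. Suppose $\chi'(G) > \Delta$; by Vizing's theorem $\chi'(G) = \Delta + 1$. Choose an edge-minimal subgraph $H \subseteq G$ with $\chi'(H) = \Delta + 1$. Then $H$ is edge-chromatic critical, and because $\chi'(H) \leq \Delta(H) + 1$ together with $\Delta(H) \leq \Delta$ forces $\Delta(H) = \Delta$. Since tree-width is monotone under subgraphs, $H$ still has tree-width at most $k$ and is therefore $k$-degenerate, so there is a vertex $v \in V(H)$ with $d_H(v) \leq k$.

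The next step is to apply Vizing's Adjacency Lemma to $v$ in $H$: for every edge $vu \in E(H)$, the endpoint $u$ has at least $\Delta - d_H(v) + 1$ neighbours of degree $\Delta$ distinct from $v$, and symmetrically $v$ has at least $\Delta - d_H(u) + 1$ such neighbours distinct from $u$. Running these inequalities for every neighbour $u$ of $v$, and using that $v$ has only $d_H(v) \leq k$ neighbours to spare, forces $d_H(u) \geq \Delta - k + 2$ for every $u \in N_H(v)$ and produces many neighbours of $v$ of maximum degree $\Delta$. When $\Delta \geq 2k$ the counts become incompatible with $d_H(v) \leq k$, delivering the desired contradiction; together with the trivial bound $\chi'(G) \geq \Delta$, this gives $\chi'(G) = \Delta$.

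The main obstacle is making these adjacency inequalities tight at the boundary $\Delta = 2k$: the pair-wise form $d_H(u) + d_H(v) \geq \Delta + 2$ alone yields $d_H(u) \geq k + 2$ for every neighbour of $v$, which is not yet contradictory. To close the gap one must either invoke the stronger counting form of the adjacency lemma, showing that $v$ itself has at least $\Delta - d_H(v) + 1$ maximum-degree neighbours and hence $d_H(v) \geq \lceil (\Delta + 1)/2 \rceil$, which already gives $\Delta \leq 2k - 1$; or, failing that, iterate the degeneracy argument by extracting a second low-degree vertex $w \in V(H - v)$ (available since $H - v$ still has tree-width at most $k$) and play the adjacency constraints at $v$ and $w$ against each other, exploiting that both $v$ and $w$ lie in bags of size at most $k + 1$ of the tree decomposition to control how their neighbourhoods can overlap.
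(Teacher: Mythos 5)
The paper does not actually prove this statement: Theorem~\ref{thm:nishizeki} is quoted from Zhou, Nakano and Nishizeki~\cite{DBLP:journals/jal/ZhouNN96}, so your proposal can only be judged on its own terms. Your overall strategy (pass to a $\Delta$-critical subgraph $H$, combine Vizing's Adjacency Lemma with the $k$-degeneracy of graphs of tree-width at most $k$) is viable, but the step that is supposed to produce the contradiction is missing, and the first of your two suggested repairs rests on a false statement. The Adjacency Lemma applied to an edge $uv$ bounds from below the number of degree-$\Delta$ neighbours of $u$ in terms of $d_H(v)$; it does \emph{not} say that $v$ itself has at least $\Delta - d_H(v)+1$ neighbours of degree $\Delta$. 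The conclusion you draw from that misreading, namely $d_H(v)\ge\lceil(\Delta+1)/2\rceil$ for every vertex of a critical graph, is false: critical graphs with a vertex of degree $2$ exist for arbitrarily large $\Delta$ (for instance, any critical subgraph witnessing that the overfull graph obtained from $K_{2m}$ by subdividing one edge is of class two must contain the subdivision vertex, which has degree $2$, while $\Delta=2m-1$). Your second repair, ``play the adjacency constraints at $v$ and $w$ against each other,'' is not yet an argument.

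The gap can be closed, with less machinery than you set up. In the critical graph $H$ let $B$ be the set of vertices of degree at least $k+1$; $B$ is non-empty because $\Delta\ge 2k\ge k+1$. If $u\in B$ has a neighbour $s$ of degree at most $k$, the Adjacency Lemma applied to $us$ gives $u$ at least $\Delta-k+1\ge k+1$ neighbours of degree $\Delta$, all of which lie in $B$; if $u$ has no such neighbour, then all of its $d_H(u)\ge k+1$ neighbours already lie in $B$. Either way every vertex of the induced subgraph $H[B]$ has at least $k+1$ neighbours inside $B$, so $H[B]$ has minimum degree at least $k+1$; but $H[B]$ still has tree-width at most $k$ and is therefore $k$-degenerate, a contradiction. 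This is exactly where the hypothesis $\Delta\ge 2k$ enters. Note also that the pairwise inequality $d(u)+d(v)\ge\Delta+2$, which is all your written argument actually extracts from criticality, cannot suffice on its own: Section~\ref{sec:configurations} of the paper is devoted precisely to graphs of tree-width $k$ satisfying that inequality on every edge, and they exist in abundance.
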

 Bearing this in mind we focused our research on graphs of tree-width 3 and high maximum degree. 
 
 \subsection{The approach}
 
 Let $G$ be a graph with a subset of edges $F \subset E(G)$ and an assignment of lists $L$ to the edges of $G$. 
 For an $L$-edge-colouring $\mathcal{C}$ 
 of $G - F$ we call a colour $c$
 of the list of an uncoloured edge $e \in F$ \emph{available}, if no edge adjacent to $e$ has already been coloured with $c$. The
 set of available colours of the edge $e$ is called \emph{list of remaining colours} and denoted by~$L^{\mathcal{C}}(e)$.
 
 Here is an outline of the proof of Theorem~\ref{thm:simple-results}. Let a graph $G$
 of tree-width 3 and high maximum degree $\Delta$ be given and lists of colours $L,$ each of size at least $\Delta,$
 be assigned to its edges. The tree-width will be used in combination with the maximum degree to locate a suitable substructure in $G$
 that consists of edges $F$. We then pursue a Vizing-like approach. More precisely ew use an inductional argument to find an $L$-edge-colouring $\mathcal{C}$ of the graph $G-F$. 
 Thus in order to extend $\mathcal{C}$ to an $L$-edge-colouring of $G$ we have to colour the edges $F$ from the lists of remaining
 colours $L^{\mathcal{C}}$. 
 We will prove the first two items of Theorem~\ref{thm:simple-results} 
 this way. In the proof of the third item we will have to find an $L$-edge-colouring $\mathcal{C}$ with certain properties. This is feasible
 by colouring an auxiliary graph $G^*$ of tree-width 3 and maximum degree $\Delta$. 
 At that point it will important that $G^*$ is $\Delta$-edge-colourable, which is asserted by Theorem~\ref{thm:nishizeki} if
 $\Delta$ is at least 6.
 
 The methods presented are used in~\cite{Lan13} to prove a list version of Vizing's theorem for graphs of tree-width 3 and to
 verify the list colouring conjecture for Halin graphs.
 The rest of the paper is organised as follows. In Section~\ref{sec:configurations} we will
 locate certain substructures for which we will solve the related instances of list edge colouring in Section~\ref{sec:instances}. 
 In Section~\ref{sec:results} we will combine these efforts to give proofs of the main results.

\section{Finding substructures}
 \label{sec:configurations}
 In this section we will identify some substructures that will arise within the graphs of our interest.
 \subsection{Bounded tree-width}
 For a graph $G$ a \emph{tree decomposition} $(T,\mathcal{V})$ consists of a tree $T$ and a collection 
 $\mathcal{V} = \{V_t \text{ : } t \in V(T) \}$ 
 of \emph{bags} $V_t \subset V(G)$ such that
 \begin{itemize}
  \item $V(G) = \bigcup_{t \in V(T)} V_t,$
  \item for each $vw \in E(G)$ there exists a $t \in V(T)$ such that $v,$ $w \in V_t$ and
  \item if $v \in V_{t_1} \cap V_{t_2},$ then $v \in V_t$ for each vertex $t$ that lies on the path connecting $t_1$ and $t_2$ in $T$.
 \end{itemize}
 A tree decomposition $(T,\mathcal{V})$ of $G$ has \emph{width} $k,$ if each bag has a size of at most $k+1$.
 The \emph{tree-width} of $G$ is the smallest integer $k$ for which there exists a width $k$ tree decomposition of $G$. As
 our later proofs are based on minimality it is important to mention that graphs of tree-width at most $k$ form a minor-closed family. 
  A \emph{path decomposition} is a tree decomposition $(T,\mathcal{V})$ in which the associated
 tree is a path, and the \emph{path-width} of $G$ is the minimum width over all path decompositions of $G$.

 The next definition presents the general substructure that we are looking for.

\begin{figure}
\centering
\tikzstyle{vertex}=[circle,draw,minimum size=14pt,inner sep=0pt]
\tikzstyle{edge} = [draw,-]
\tikzstyle{weight} = [font=\small,draw,fill           = white,
                                  text           = black]
 
 \begin{tikzpicture}[scale=1.1]
\tikzstyle{vertex}=[circle,draw,minimum size=4pt,inner sep=0pt,fill]
\tikzstyle{edge-dot} = [draw,dotted]
\tikzstyle{edge-lin} = [draw,-]
\tikzstyle{weight} = [font=\small,draw,fill           = white,
                                  text           = black]
    \foreach \pos/\name in {{(1,0)/v_1},  {(3,0)/v_3},
                            {(0,2)/w_1}, {(4,2)/w_3},{(5,2)/w_5},{(4,0)/u}}
       \node[vertex, align=center] (\name) at \pos {};
    \foreach \pos/\name in {{(0,-1)/x_0},{(2,-1)/x_1},{(5,-1)/x_4},{(3,-1)/x_3},{(1,-1)/x_2}}
       \node[circle,inner sep = 0pt] (\name) at \pos {};
    \node [circle,inner sep = 0, minimum size = 12 pt, below] (test) at (4,0) {$u$};

\node [circle] (dots) at (2,0) {...};
\node [circle] (dots2) at (2,2) {...};

\node[circle] (W) at (-1,2) {\large{$W$}};
\node[circle] (V) at (-1,0) {\large{$V$}};

    \foreach \source/ \dest in {v_1/w_1,
 w_1/u/4, 
                                        w_3/v_3,
                                         v_3/w_1, 
                                         w_5/v_1,w_5/v_1,w_5/v_3,v_3/u,v_1/x_1,v_1/x_0,v_3/x_3,v_3/x_1,v_1/x_2,v_1/w_3}
       \path[edge-dot] (\source) -- node[] {} (\dest);

    \foreach \source/ \dest in {u/w_1, u/w_3,w_5/u}
       \path[edge-lin] (\source) -- node[] {} (\dest);

  \draw (v_1) edge[out=-30,in=-150,dotted] (u);
  \draw (v_1) edge[out=-30,in=-150,dotted] (v_3);

      \path[draw,opacity=.1,line width=20,line cap=round, color = black] (v_1) --node[] {} (u);
      \path[draw,opacity=.1,line width=20,line cap=round, color = black] (w_1) --node[] {} (w_5);

\fill [color=black,opacity=0.1] (2.5,-1.2) ellipse (3 and 0.6);
 \end{tikzpicture} 
\caption{A useful substructure.}
\label{fig:substructure}
\end{figure}
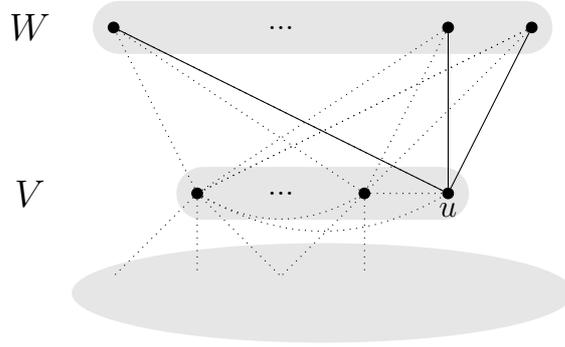
 \begin{definition}
  \label{def:substructure}
  For a graph $G$ and integers $k,l\in \mathbb N$ we call a triplet $(V,W,{u})$ that consists of two disjoint non-empty subsets $V,W \subset V(G)$ and
  a dedicated vertex ${u} \in V$ a \emph{$(k,l)$-substructure} if
 \begin{enumerate}[label=\emph{\alph*})]
  \item \label{itm:1}   $W$ is stable and $N(W) \subset V$,
  \item \label{itm:2}   each vertex of $W$ is connected to ${u},$
  \item \label{itm:3}   $\deg(w) \leq k$ for each $w \in W,$
  \item \label{itm:9}   $|V| \leq k+1,$
    \item \label{itm:8}   $N({u}) \subset (V \cup W),$
     \item \label{itm:4}   $\deg({u}) \geq l+2-k,$
       \item \label{itm:7}   $|W| \geq l+2 -2k$ and

  \item \label{itm:6}   there is a width $k$ tree decomposition $(T,\mathcal{V})$ of $G - W$ such 
                        that $V \subset V_t$ for a vertex $t \in V(T).$
 
  \end{enumerate} 
\end{definition}

 We start with a general result that can be extracted from \cite{journals/corr/abs-1110-4077}.

\begin{lemma}
 \label{lem:tree-width-k-configurations}
 For $l,$ $k \in \mathbb{N}$ with $l \geq 2k-1,$ let $G$ be a graph of tree-width at most $k$ 
 and $$\deg(v) + \deg(w) \geq \max(l,\deg(v),\deg(w))+2$$ 
 for each edge $vw\in E(G)$. Then $G$ has a $(k,l)$-substructure $(V,W,{u})$. 
 
 Furthermore, if $G$ has path-width of at most $k,$ then $|V| \leq k$, in the tree decomposition $(T,\mathcal{V})$
 associated with $(V,W,{u})$ the tree $T$ is a path and the vertex $t \in V(T)$ specified in Definition~\ref{def:substructure}(\ref{itm:6}) is a leaf.
\end{lemma}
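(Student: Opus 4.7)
My plan is to extract the substructure from a suitable leaf of a width-$k$ tree decomposition of $G$.

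I first observe that the hypothesis $l \geq 2k-1$ forces the set $L := \{v \in V(G) : \deg(v) \leq k\}$ to be independent: an edge $vw$ with both endpoints in $L$ would yield $\deg(v)+\deg(w) \leq 2k < l+2 \leq \max(l,\deg(v),\deg(w))+2$, contradicting the hypothesis. Pick a width-$k$ tree decomposition $(T,\mathcal{V})$ of $G$, and by contracting any redundant bags assume that no bag is contained in an adjacent one; then for every leaf $t$ of $T$ with neighbour $t'$ the private set $B := V_t \setminus V_{t'}$ is non-empty. Each $w \in B$ satisfies $N(w) \subseteq V_t$, so $\deg(w) \leq k$ and $B \subseteq L$; the independence of $L$ then forces every edge incident to $B$ to land in the separator $S := V_t \cap V_{t'}$, which has size at most $k$.

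Next, I pick the hub $u \in S$. For any edge $uw$ with $w \in B$ the degree hypothesis yields $\deg(u) \geq l+2-k > k$. A further extremal choice of $(T,\mathcal{V})$ and of $t$, in the spirit of \cite{journals/corr/abs-1110-4077}, allows me to assume that $u$'s appearances in $T$ are confined to $\{t,t'\}$, so that $N(u) \subseteq V_t \cup V_{t'}$. I then set $W := B \cap N(u)$ and $V := V_{t'}$; $V$ and $W$ are disjoint because $B$ and $V_{t'}$ are. Conditions~(\ref{itm:1})--(\ref{itm:3}) follow from $W \subseteq B \subseteq L$ together with $N(W) \subseteq S \subseteq V$. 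Condition~(\ref{itm:9}) is just $|V_{t'}| \leq k+1$, (\ref{itm:4}) is the above bound on $\deg(u)$, and (\ref{itm:8}) holds since $N(u) \setminus V \subseteq B \cap N(u) = W$. Condition~(\ref{itm:7}) follows from $|N(u) \setminus W| \leq |V \setminus \{u\}| \leq k$ combined with $\deg(u) \geq l+2-k$, giving $|W| \geq l+2-2k$. Finally, deleting $W$ from every bag of $(T,\mathcal{V})$ yields a tree decomposition of $G-W$ of width at most $k$ in which $V$ still appears as the bag at $t'$, establishing (\ref{itm:6}).

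The main obstacle is arranging for $u$'s appearances in $T$ to be confined to $\{t,t'\}$: a priori $u$ may appear in many bags and have neighbours scattered throughout $T$, so that no single bag of size $k+1$ of $G-W$ contains $N(u)\setminus W$. Overcoming this requires a careful simultaneous extremal choice of both the decomposition and the leaf, which is the technical heart of the construction borrowed from \cite{journals/corr/abs-1110-4077}. For the path-width addendum, $T$ is a path so $t$ has a unique neighbour $t'$; a parallel but stronger extremal choice, ensuring that every vertex of $B$ is adjacent to $u$ and that $u$ has no neighbours in $V_{t'} \setminus V_t$, lets me take $W := B$ and $V := S$ in place of $V_{t'}$, giving $|V| \leq k$, with $t$ still a leaf of the path decomposition of $G-W$ whose shrunken bag equals $V$, as required.
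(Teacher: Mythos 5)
There is a genuine gap, and in fact the construction as you set it up cannot work. You harvest $W$ from the private set $B=V_t\setminus V_{t'}$ of a \emph{single leaf bag}, and then try to repair the resulting problems (conditions~(\ref{itm:8}) and~(\ref{itm:7}) of Definition~\ref{def:substructure}) by an unspecified ``further extremal choice'' forcing $u$ to appear only in $\{t,t'\}$. You yourself flag this as the technical heart, but no such choice exists in general. Concretely, take $k=3$, $l=7$ and the graph obtained by joining a triangle $\{u,v_1,v_2\}$ to an independent set $\{w_1,\dots,w_m\}$, $m\geq 6$ (each $w_i$ adjacent to all of $u,v_1,v_2$). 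This graph satisfies the degree hypothesis and has tree-width $3$, but in \emph{every} width-$3$ tree decomposition each $w_i$ forces a bag equal to $\{u,v_1,v_2,w_i\}$, so every leaf's private set has size at most $1$ and $u$ necessarily appears in almost all bags. Your $W:=B\cap N(u)$ then has size at most $1<l+2-2k=3$, so condition~(\ref{itm:7}) fails, and $N(u)\subseteq V_t\cup V_{t'}$ is false no matter how the decomposition and leaf are chosen. The valid substructure here is $W=\{w_1,\dots,w_m\}$, whose vertices are scattered over $m$ different bags --- which is exactly what a single-leaf construction cannot see.

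The paper avoids this by a different, global extremal choice: root the tree, and among all vertices of degree at least $k+1$ pick $u$ whose topmost bag $t_u$ has maximum height; then work with the \emph{entire subtree} $T'$ rooted at $t_u$ and set $X=\bigcup_{t\in V(T')}V_t$. Maximality of the height guarantees that every high-degree vertex of $X$ already lies in $V_{t_u}$, so all of $u$'s neighbours outside a single bag of size $k+1$ are low-degree and can be taken as $W$, however many bags of $T'$ they are spread over; this is what simultaneously delivers (\ref{itm:8}), (\ref{itm:4}) and (\ref{itm:7}). Your opening observation (the low-degree vertices form an independent set) and your verification of (\ref{itm:1})--(\ref{itm:9}) are fine, and your treatment of the path-width addendum has the same structural flaw, since it again relies on confining $u$ to two consecutive bags. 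To fix the proof you would need to replace the leaf bag by the subtree below the deepest top-bag of a high-degree vertex, i.e.\ essentially adopt the Meeks--Scott argument you allude to rather than defer to it.
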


 Here are some more definitions and an elemental lemma, that will be used only in the next proof.
 For a tree $T$, which is rooted in some vertex $r \in V (T ),$ we define the \emph{height} of
 any $t \in V(T)$ to be the distance from $r$ to $t$. If $(T, \mathcal{V})$ is a tree-decomposition of a graph $G$, then for any $v \in V (G),$ we define
 $t_v$ to be the unique vertex $t$ of minimum height such that $v \in V_t$. 
 For a connected graph $G$ we call $S \subset V(G)$ a cut-set, if the graph $G - S$ is not connected. 
 A proof of the next lemma can be found in \cite{Diestel00}.
\begin{lemma}
\label{lem:tree-cut}
 Let $G$ be a connected graph with a tree decomposition $(T,\mathcal{V})$. Then for any edge $t_1t_2 \in E(T)$ the intersection $V_{t_1} \cap V_{t_2}$ is a cut-set.
\end{lemma}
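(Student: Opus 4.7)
The plan is to exploit the fact that any edge of the tree $T$ splits $T$ into two subtrees, and to show that the corresponding decomposition of $V(G)$ has the claimed intersection and ``no crossing edges'' property. Concretely, removing the edge $t_1t_2$ from $T$ leaves two connected components, call them $T_1$ (containing $t_1$) and $T_2$ (containing $t_2$). Set
\[
A \;=\; \bigcup_{t\in V(T_1)} V_t, \qquad B \;=\; \bigcup_{t\in V(T_2)} V_t.
\]
By the first bullet in the definition of a tree decomposition, $A\cup B = V(G)$.

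The heart of the argument is to establish $A\cap B = V_{t_1}\cap V_{t_2}$. The inclusion ``$\supset$'' is immediate. For ``$\subset$'', take $v\in A\cap B$: then there exist $s_1\in V(T_1)$ and $s_2\in V(T_2)$ with $v\in V_{s_1}\cap V_{s_2}$. The unique path in $T$ from $s_1$ to $s_2$ necessarily crosses the edge $t_1t_2$, so it contains both $t_1$ and $t_2$. The third axiom of a tree decomposition (the ``interpolation'' property) then forces $v\in V_{t_1}\cap V_{t_2}$. The second step is to show that no edge of $G$ joins a vertex of $A\setminus B$ to a vertex of $B\setminus A$: if $vw\in E(G)$, then by the second axiom there is a single bag $V_t$ containing both $v$ and $w$; depending on whether $t\in V(T_1)$ or $t\in V(T_2)$, the pair $\{v,w\}$ lies entirely in $A$ or entirely in $B$.

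Combining these two facts, removing $V_{t_1}\cap V_{t_2}=A\cap B$ from $G$ leaves the two disjoint parts $A\setminus(V_{t_1}\cap V_{t_2})$ and $B\setminus(V_{t_1}\cap V_{t_2})$ with no edge between them. Provided both parts are non-empty, this shows $G-(V_{t_1}\cap V_{t_2})$ is disconnected, which is the claim. The main obstacle is really just the identification $A\cap B=V_{t_1}\cap V_{t_2}$, since the interpolation property has to be invoked carefully; after that, everything is bookkeeping. The only subtlety is the degenerate case where $A\setminus(V_{t_1}\cap V_{t_2})$ or $B\setminus(V_{t_1}\cap V_{t_2})$ is empty, but this can be handled by noting that such a situation allows one to contract the edge $t_1t_2$ without affecting the decomposition, so it may be assumed away.
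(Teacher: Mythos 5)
Your argument is correct and is exactly the standard proof (the paper gives none of its own here, deferring to Diestel): split $T$ at $t_1t_2$, set $A$ and $B$ to the unions of bags over the two components, identify $A\cap B=V_{t_1}\cap V_{t_2}$ via the interpolation axiom, and observe that no edge of $G$ crosses from $A\setminus B$ to $B\setminus A$ because every edge lives in a single bag. The one place I would push back is your treatment of the degenerate case: if, say, $A\setminus(V_{t_1}\cap V_{t_2})=\emptyset$, then $A\subseteq V_{t_1}\cap V_{t_2}$ and $G-(V_{t_1}\cap V_{t_2})$ need not be disconnected at all, so the literal claim "$V_{t_1}\cap V_{t_2}$ is a cut-set" is simply false for that edge; contracting $t_1t_2$ produces a different decomposition and says nothing about the original edge. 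The honest resolution is the one the paper itself adopts in the sentence immediately following the lemma: what is really proved (and what is used later, in the proof of Lemma~2.2) is that $V_{t_1}\cap V_{t_2}$ \emph{separates} $\bigcup_{t\in V(T_1)}V_t$ from $\bigcup_{t\in V(T_2)}V_t$, a statement that your two facts establish with no exceptional cases and which holds vacuously when one side is swallowed by the separator.
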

In other words, if $T_1$ and $T_2$ are the connected components of the forest $T -t_1t_2$, then the intersection $V_{t_1} \cap V_{t_2}$
separates the vertex sets $\bigcup_{t \in V(T_1)}(V_t)$ and $\bigcup_{t \in V(T_2)}(V_t)$ in $G$.

\begin{proof}[Proof of Lemma~\ref{lem:tree-width-k-configurations}]
 By the assumptions we have for all $vw \in E(G)$
  \begin{equation}
 \label{equ:tree-width-k-configurations}
  \deg(v) + \deg(w) \geq \max(l,\deg(v),\deg(w))+2 \geq l+2 \geq 2k+1.
 \end{equation}
 In particular, of any two adjacent vertices, at least one has degree at least $k+1$ (and $G$ has at least one vertex of degree at least $k+1$).  
 We define ${B} \subset V (G)$ to be the (non-empty) set of vertices of degree at least $k+1$.
 Then ${S} := V(G)\sm B$ is stable. 
 
 Fix a width $k$ tree decomposition $({T},\mathcal{V})$ of $G$ and root the associated tree ${T}$ in an 
 arbitrary vertex $r \in V({T})$. 
 Let ${{u}} \in B$ such that 
 $
 h(t_{{u}} ) = \max_{v \in B} h(t_v ).
$
   Define 
 ${T'}$ as the
 subtree of ${T}$ rooted at $t_{{u}},$ that is, the subgraph of ${T}$ induced by all vertices $t \in V({T})$ where the path from $t$ to the root $r$ contains $t_{{u}}$.  
 
 Set $X := \bigcup_{t \in V({T'})}V_t$ and $V:= N(X) \subset V_{t_u}$. Note that $|V| \leq k+1$.
 We have $B\cap X \subset V,$ since any $v \in (B \cap X)\sm  V$ would have $h(t_v)>h(t_{u})$, 
contrary to the choice of ${{u}}$.
  Consequently 
  \begin{equation}\label{X'}
  X\sm V \subset {S}.
 \end{equation} 
  
By definition of the tree decomposition, 
 no element of $X\sm V$ can appear in a bag indexed
 by a vertex $t \in V({T} - {T'})$. Since ${S}$ is stable this gives
  \begin{equation}\label{neighX'}
  N(X\sm V) \subset V.
 \end{equation}
   By definition of $t_{{u}},$ also ${{u}}$ does not appear in any bag $V_t$ of a vertex $t \in {T}- {T'}$. So, $N({u}) \subset X$.
 
    Set $W := N({{u}}) \sm V$. Then $W \subset X\sm U$. We claim that $(V,W,{u})$ is the desired substructure. To this end we check 
    if~\ref{itm:1}--\ref{itm:7} of Definition~\ref{def:substructure} hold.
    By~\eqref{X'},~\eqref{neighX'} and as $N(u) \subset X$, we can guarantee~\ref{itm:1}--\ref{itm:8}.
    
    Using the assumptions of the lemma and~\ref{itm:3}, we get
  $$\deg({{u}}) \geq l+2 - \deg(w) \geq l + 2 -k$$ and thus~\ref{itm:4}. Since $N({{u}}) \subset V \cup W$ we obtain 
  $$|W| \geq |N({{u}}) \sm (V \sm \{{{u}}\})| \geq l+2 -2k,$$
  which is as desired for ~\ref{itm:7}. 
     Note that the subtree $T'' = T - (T' - t_u)$ with $\mathcal{V}'' = \bigcup_{t \in T''} V_t$, is a tree-decomposition satisfying~\ref{itm:6}. 
  
  If $G$ has a path-width of at most $k$ we can assume that $T$ is a path and its root $r$ is a leaf. 
  Let $t'$ be the neighbour of $t_u$ in the subpath $T'.$ Without loss we can assume that $V_{t_u} \neq V_{t'}$. By Lemma~\ref{lem:tree-cut} the vertex set 
  $V_{t_{u}} \cap V_{t'}$ separates the vertices of $X'$ from the remaining vertices of $G$. Thus $N(W) \subset V_{t_{u}} \cap V_{t'}$ and consequently
   $|V| = |N(W)| \leq |V_{t_{u}} \cap V_{t'}| \leq k$.
   
  Finally, we want to show that $(T'',\mathcal{V}'')$ is path decomposition of $G-W$ where $T''$ starts at $r$ and ends in $t_u$.  
  If $X' \setminus W = \emptyset$, this is true. Otherwise,
  we need to transfer the vertices of $X' \setminus W$ into $(T'',\mathcal{V}'')$.   
  
  To this end 
  write $X' \sm W = \{x_1, \ldots, x_m\}$ and set $P$ to be the path $r\ldots t_u s_1 \ldots s_{m+1}$ with new vertices $s_1, \ldots, s_{m+1}$.
  Define bags $V''_{s_i} = \{x_i\} \cup V $ for $1 \leq i \leq m$ and $V''_{s_{m+1}} = V$. 
  Then the substructure $(V,W,u)$ with path decomposition $(P, \mathcal{V}'' \cup \{V''_{s_m} , \ldots , V''_{s_{m+1}} \})$ works as desired.
\end{proof}

 The next result provides the substructures for the proof of Lemma~\ref{lem:tree-width-3-delta-7}, which will be used
 to prove Theorem~\ref{thm:simple-results}(\ref{itm:simple-results-1}).

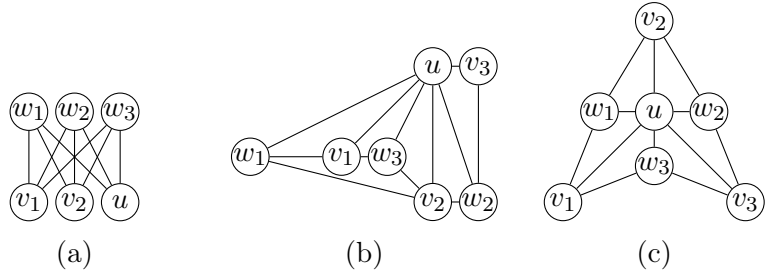
\begin{figure}
\centering
\tikzstyle{vertex}=[circle,draw,inner sep=0pt,font=\small, minimum size=14pt]
\tikzstyle{edge} = [draw,-]
\tikzstyle{weight} = [font=\small]

\begin{subfigure}[b]{0.2\textwidth}
\centering
 \begin{tikzpicture}[scale=0.6]
    \foreach \pos/\name in {{(1,0)/v_2}, {(0,0)/v_1},
                            {(2,0)/{u}},{(0,2)/w_1},{(1,2)/w_2},{(2,2)/w_3}}
       \node[vertex, align=center] (\name) at \pos {$\name$};
    \foreach \source/ \dest /\weight in {{u}/w_1/6, {u}/w_2/6, {u}/w_3/6,
                                          v_1/w_1/3, v_1/w_3/3, v_1/w_2/3,
                                          v_2/w_1/4, v_2/w_2/4, v_2/w_3/4} 
       \path[edge] (\source) -- node[weight] {} (\dest);
 \end{tikzpicture} \caption{}
\label{fig:tw3md7-conf-1}
\end{subfigure}
\begin{subfigure}[b]{0.3\textwidth}\centering
 \begin{tikzpicture}[scale=0.6]
    \foreach \pos/\name in {{(1,1)/v_2}, {(-1,2)/v_1}, {(2,4)/v_3},
                            {(1,4)/{u}},{(-3,2)/w_1},{(2,1)/w_2},{(0,2)/w_3}}
       \node[vertex, align=center] (\name) at \pos {$\name$};
    \foreach \source/ \dest /\weight in {{u}/v_1/3, {u}/v_2/4, {u}/v_3/2, {u}/w_1/6, {u}/w_2/6, {u}/w_3/6,
                                          v_1/w_1/3, v_1/w_3/3, 
                                          v_2/w_1/4, v_2/w_2/4, v_2/w_3/4,
                                         v_3/w_2/2} 
       \path[edge] (\source) -- node[weight] {} (\dest);
 \end{tikzpicture} \caption{}
\label{fig:tw3md7-conf-2}
\end{subfigure}
\begin{subfigure}[b]{0.2\textwidth}\centering
 \begin{tikzpicture}[scale=0.6]
    \foreach \pos/\name in {{(1,1)/{u}}, {(-1,-1)/v_1}, {(3,-1)/v_3},
                            {(1,3)/v_2},{(-0.2,1)/w_1},{(2.2,1)/w_2},{(1,-0.2)/w_3}}
       \node[vertex, align=center] (\name) at \pos {$\name$};
    \foreach \source/ \dest /\weight in {v_2/{u}/3, v_2/w_1/3, v_2/w_2/3, v_3/w_3/3,
                                          v_1/w_1/3, v_1/w_3/3, v_1/{u}/3, {u}/v_3/3,
                                          {u}/w_1/6, {u}/w_2/6, {u}/w_3/6,
                                         v_3/w_2/3} 
       \path[edge] (\source) -- node[weight] {} (\dest);
 \end{tikzpicture} \caption{}
\label{fig:tw3md7-conf-3}
\end{subfigure}
\caption{$\deg_G(w_i) = 3$ for $1 \leq i \leq 3$ and $\deg_G({u}) = 6$.}
\label{fig:tw3md7-conf}
\end{figure}

 \begin{lemma}
 \label{lem:conf-tw3md7}
  Let $G$ be a graph of tree-width at most $3$ and $$\deg(v) + \deg(w) \geq \max(7, \deg(v),\deg(w))+2$$ 
 for each edge $vw\in E(G)$.
 Then $G$ has a $(3,7)$-substructure $(V,W,{u})$ and one of the following holds:
\begin{enumerate}[label=\emph{\roman*})]
   \item \label{itm:conf-tw3md7-1}$|W| \geq 4$ or
   \item \label{itm:conf-tw3md7-2}$W = \{w_1, w_2, w_3\}$ with $\deg(w_i) = 3$ for each $1\leq i \leq 3$, $\deg({u}) = 6$ and $G$ has one
         of the subgraphs shown in Figure~\ref{fig:tw3md7-conf}. 
  \end{enumerate}
 \end{lemma}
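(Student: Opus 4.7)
The plan is to apply Lemma~\ref{lem:tree-width-k-configurations} with $k=3$, $l=7$ to obtain a $(3,7)$-substructure $(V,W,u)$; then, assuming case \ref{itm:conf-tw3md7-1} fails (i.e.\ $|W|\leq 3$), pin down the parameters $|V|,|W|,\deg(u),\deg(w_i)$ exactly and enumerate how the three $w_i$'s can attach to $V\sm\{u\}$.

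First I would invoke Lemma~\ref{lem:tree-width-k-configurations}, which directly yields a substructure with $|V|\leq 4$, $|W|\geq 3$, $\deg(u)\geq 6$ and $\deg(w)\leq 3$ for every $w\in W$. If $|W|\geq 4$ we are in case \ref{itm:conf-tw3md7-1}, so assume $W=\{w_1,w_2,w_3\}$. From $N(u)\subset V\cup W$ together with $u\in V$ one gets $\deg(u)\leq |V|-1+|W|\leq 3+3=6$, forcing $\deg(u)=6$ and $|V|=4$. Writing $V\sm\{u\}=\{v_1,v_2,v_3\}$, the vertex $u$ is then adjacent to every other vertex of $V\cup W$. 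Applying the degree hypothesis to the edge $uw_i$ gives
\[
 6+\deg(w_i)=\deg(u)+\deg(w_i)\geq \max(7,6,\deg(w_i))+2=9,
\]
so $\deg(w_i)\geq 3$, and combined with $\deg(w_i)\leq 3$ we conclude $\deg(w_i)=3$ for each $i$.

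Because $N(w_i)\subset V$ and $w_i$ is already adjacent to $u$, each $w_i$ has exactly two further neighbours in the three-element set $\{v_1,v_2,v_3\}$. Thus each $w_i$ is assigned a ``type'' $\tau(w_i)\in\binom{\{v_1,v_2,v_3\}}{2}$, of which there are exactly three. Up to relabelling the $v_j$'s and permuting the $w_i$'s, the multiset $\{\tau(w_1),\tau(w_2),\tau(w_3)\}$ falls into one of three cases: all three types equal, exactly two equal, or all three distinct. These yield precisely the subgraphs pictured in Figures~\ref{fig:tw3md7-conf-1}, \ref{fig:tw3md7-conf-2} and \ref{fig:tw3md7-conf-3} respectively (with $v_3$ simply omitted from the first figure because it has no $W$-neighbour).

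Once this combinatorial enumeration is set up, no real obstacle remains: the arithmetic identities $|V|+|W|=7$ and $\deg(u)=6$ are essentially forced, and the rest is a finite case check on 2-subsets of a 3-set. The one subtlety worth stating explicitly in the write-up is that the figures display only the part of $G$ incident to $W\cup\{u\}$, so the vertex $v_3$ in case \ref{fig:tw3md7-conf-1} and any further neighbours of the $v_j$'s are present in $V$ but suppressed from the picture.
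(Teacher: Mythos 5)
Your proposal is correct and follows essentially the same route as the paper: invoke Lemma~\ref{lem:tree-width-k-configurations}, assume $|W|\leq 3$, use Definition~\ref{def:substructure} and the edge-degree hypothesis to force $|W|=3$, $\deg(u)=6$ and $\deg(w_i)=3$, and then enumerate the three possible neighbourhood patterns of the $w_i$ in $V\sm\{u\}$. Your explicit derivation of $|V|=4$ and the ``type'' bookkeeping is in fact a little cleaner than the paper's closing remark about the cases $|V|=2$ and $|V|=3$.
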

 \begin{proof}
  By the assumptions $G$ has a $(3,7)$-substructure $(V,W,{u})$ as stated in Lemma~\ref{lem:tree-width-k-configurations}. 
  We will assume that $|W| \leq 3$ and hence $|W| = 3$ by Definition~\ref{def:substructure}(\ref{itm:7}). We have
  $\deg({u}) \leq |(W \cup V) \setminus \{{u}\}| = 6$ as $W$ and $V$ are disjoint, which yields $\deg({u}) = 6$ 
  by Definition~\ref{def:substructure}(\ref{itm:4} ).
  So for any $w \in W$ it holds that $$\deg(w) \geq \max(7,\deg({u}),\deg(w)) +2 - \deg({u})\geq 9 - \deg({u}) = 3 $$ and thus $\deg(w) = 3$ by 
   Definition~\ref{def:substructure}(\ref{itm:3}). 
  Either all vertices of $W$ have the same neighbourhood, or exactly two of them have the same neighbourhood, or the neighbourhoods are pairwise distinct.
 So by symmetry $G$ has one of the subgraphs 
  shown in Figure~\ref{fig:tw3md7-conf}. Note that the case $|V| = 2$ can not occur and the case $|V| = 3$ is covered in Figure~\ref{fig:tw3md7-conf-1}.
 \end{proof}

 Now we will handle the substructures for the Lemma~\ref{lem:path-width-3-delta-6}, which will be used
 to proof Theorem~\ref{thm:simple-results}(\ref{itm:simple-results-2}).

\begin{figure}
\centering
\tikzstyle{vertex}=[circle,draw, minimum size=14pt,inner sep=0pt]
\tikzstyle{edge} = [draw,-]
\tikzstyle{weight} = [font=\small]
 \begin{tikzpicture}[scale=0.8]
    \foreach \pos/\name in {{(1,3)/v_1}, {(1,2)/{u}}, {(2,2)/w_2},
                            {(0,2)/w_1},{(1,1)/v_2}, {(2,3)/{v'}}}
       \node[vertex] (\name) at \pos {$\name$};
    \foreach \source/ \dest /\weight in {v_1/{u}/5, v_1/w_1/3, v_1/w_2/3, {u}/w_1/2
                                        , {u}/w_2/2, v_2/{u}/5, v_2/w_1/3, v_2/w_2/3, {u}/{v'}/1}
       \path[edge] (\source) -- node[weight] {} (\dest);

    \foreach \vertex  in {}
        \path node[selected vertex] at (\vertex) {$\vertex$};

\end{tikzpicture}

\caption{$\deg_G(w_1) = \deg_G(w_1) = 3$ and $\deg_G({u}) = 5$.}
\label{fig:pw3md6-conf-2}
\end{figure}
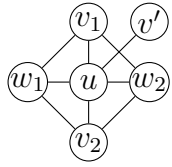

\begin{lemma}
\label{lem:conf-pw3md6}
 Let $G$ be a graph of path-width at most $3$ and $$\deg(v) + \deg(w) \geq \max(6, \deg(v),\deg(w))+2$$ 
 for each edge $vw\in E(G)$.
 Then $G$ has a $(3,6)$-substructure $(V,W,{u})$ and one of the following holds:
\begin{enumerate}[label=\emph{\roman*})]
  \item \label{itm:conf-pw3md6-1} $|W| \geq 3$ or
  \item \label{itm:conf-pw3md6-2} $|W| = 2,$ $\deg(w) = 3$ for each $w \in W,$ $\deg({u}) = 5$ and $G$ has a subgraph
         as shown in Figure~\ref{fig:pw3md6-conf-2}.
\end{enumerate}
\end{lemma}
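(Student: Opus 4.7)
The plan is to invoke Lemma~\ref{lem:tree-width-k-configurations} with $k = 3$ and $l = 6$. The hypothesis $l \geq 2k - 1 = 5$ holds, and the edge-degree assumption transfers verbatim, so I obtain a $(3, 6)$-substructure $(V, W, u)$. Definition~\ref{def:substructure} then provides $|V| \leq 4$, $|W| \geq 2$, $\deg(u) \geq 5$, $\deg(w) \leq 3$ for every $w \in W$, and $N(u) \subset V \cup W$. The path-width version of Lemma~\ref{lem:tree-width-k-configurations} also tells me that the tree decomposition of $G - W$ is a path and that $V$ sits in a leaf bag, which I will use only at the very end.

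If $|W| \geq 3$ I am immediately in case~(\ref{itm:conf-pw3md6-1}), so the interesting case is $|W| = 2$, which I want to match with~(\ref{itm:conf-pw3md6-2}). Because $u \in V$ and $V \cap W = \emptyset$, the inclusion $N(u) \subset V \cup W$ yields $\deg(u) \leq (|V| - 1) + |W| \leq 5$, forcing $\deg(u) = 5$ and $|V| = 4$. Writing $V = \{u, v_1, v_2, v'\}$ and $W = \{w_1, w_2\}$, the edge condition applied to each edge $uw$ reduces, using $\deg(u) = 5$ and $\deg(w) \leq 3$, to $\deg(w) + 5 \geq \max(6, 5, \deg(w)) + 2 = 8$, so $\deg(w) \geq 3$. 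Combined with $\deg(w) \leq 3$ this pins down $\deg(w) = 3$ for each $w \in W$.

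Since $N(w) \subset V$ and $u \in N(w)$, each $w_i$ has exactly two further neighbours in $\{v_1, v_2, v'\}$. To reproduce Figure~\ref{fig:pw3md6-conf-2} I would show, possibly after relabelling, that $w_1$ and $w_2$ share the same pair of non-$u$ neighbours, which I then call $v_1, v_2$; the remaining vertex of $V \setminus \{u\}$ plays the role of $v'$ and is adjacent to neither $w_i$. Here I would invoke the path-decomposition data from Lemma~\ref{lem:tree-width-k-configurations}: the leaf bag of the path decomposition of $G - W$ equals $V$, the adjoining bag meets $V$ in a separator of size at most three, and by the interval property the ``extra'' vertex of $V$ that sits outside this separator can be taken as $v'$. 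Reading off the resulting edges $u v_1, u v_2, u v', u w_i, v_1 w_i, v_2 w_i$ for $i = 1, 2$ then assembles exactly the subgraph depicted in Figure~\ref{fig:pw3md6-conf-2}.

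The main obstacle is this last step: leveraging the path-width to force the two $w_i$'s into the symmetric configuration of Figure~\ref{fig:pw3md6-conf-2} and to rule out asymmetric alternatives (for instance $w_1 \sim v_1, v_2$ and $w_2 \sim v_1, v'$). I would handle it by a localised analysis of the path decomposition around the leaf bag, combined with a small minimality adjustment on the choice of the substructure $(V, W, u)$ to ensure that $v'$ can be chosen as the vertex of $V \setminus \{u\}$ avoided by both $w_1$ and $w_2$.
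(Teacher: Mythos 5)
Your proposal follows essentially the same route as the paper: obtain the $(3,6)$-substructure from Lemma~\ref{lem:tree-width-k-configurations}, dispose of $|W|\geq 3$ as case~(i), and for $|W|=2$ deduce $\deg(u)=5$, $|V|=4$ and $\deg(w)=3$ exactly as the paper does. The only remark worth making concerns the step you single out as ``the main obstacle'': it is not an obstacle at all once you have extracted from the path decomposition the fact that some vertex of $V$ is adjacent to no element of $W$, i.e.\ $|N(W)|\leq 3$ (this is precisely what the path-width part of Lemma~\ref{lem:tree-width-k-configurations} supplies, via $N(W)\subseteq V_{t_u}\cap V_{t'}$). Since each $w_i$ has $N(w_i)\subseteq N(W)$ with $|N(w_i)|=3=|N(W)|$, pigeonhole forces $N(w_1)=N(w_2)=N(W)$, so your asymmetric alternative $w_1\sim v_1,v_2$ and $w_2\sim v_1,v'$ would give $|N(W)|=4$ and is excluded outright; no ``minimality adjustment'' of the substructure is needed. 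Finally, the edge $uv'$ in Figure~\ref{fig:pw3md6-conf-2} comes for free: $\deg(u)=5=|(V\cup W)\setminus\{u\}|$ together with $N(u)\subseteq V\cup W$ means $u$ is adjacent to every other vertex of $V\cup W$, in particular to the fourth vertex $v'$ of $V$.
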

 \begin{proof}
  By the assumptions $G$ has a $(3,6)$-substructure $(V,W,{u})$ as stated in Lemma~\ref{lem:tree-width-k-configurations}.
  There is a vertex in $V$ to
  which no element of $W$ is connected and therefore ${|N(W)| \leq 3}$.
  We will assume that ${|W| \leq 2}$ thus $|W| = 2$ by Definition~\ref{def:substructure}(\ref{itm:7}).
  As $W$ and $V$ are disjoint, ${\deg({u}) \leq |(W \cup V) \setminus \{{u}\}| = 5}$ which yields $\deg({u}) = 5$ 
  by Definition~\ref{def:substructure}(\ref{itm:4}).
  So for any $w \in W$ it holds that $$\deg(w) \geq \max(6,\deg({u}),\deg(w)) +2 - \deg({u}) \geq 8 - \deg({u}) = 3.$$ As $|N(W)| \leq 3,$
  the elements of $W$ share the same neighbourhood which results in the subgraph 
  shown in Figure~\ref{fig:pw3md6-conf-2}.
 \end{proof}

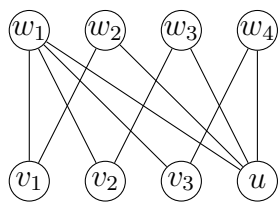
\begin{figure}
\centering
\tikzstyle{vertex}=[circle,draw,minimum size=15pt,inner sep=0pt]
\tikzstyle{edge} = [draw,-]
\tikzstyle{weight} = [font=\small]
  \begin{tikzpicture}[scale=1]\centering
    \foreach \pos/\name in {{(0,0)/v_1}, {(1,0)/v_2}, {(2,0)/v_3},{(3,0)/{u}},
                            {(0,2)/w_1}, {(1,2)/w_2}, {(2,2)/w_3},{(3,2)/w_4}}
       \node[vertex, align=center] (\name) at \pos {$\name$};
    \foreach \source/ \dest /\weight in {v_1/w_1/2, v_1/w_2/2, 
                                         v_2/w_1/2, v_2/w_3/2, 
                                         v_3/w_1/2, v_3/w_4/2,
                                         {u}/w_2/3, {u}/w_3/3,{u}/w_4/3, {u}/w_1/3}
       \path[edge] (\source) -- node[weight] {} (\dest);
 \end{tikzpicture}
\caption{$\deg_G(w_1) = 4$ and $\deg_G(w_i) = 2$ for $2 \leq i \leq 4$.}
\label{fig:pw4-4222}
\end{figure}

  Finally, here are the substructures for the Lemma~\ref{lem:pw4md11}, which implies Theorem~\ref{thm:simple-results}(\ref{itm:simple-results-3}). 

\begin{lemma}
 \label{lem:conf-pw4md10}
 Let $G$ be a graph of path-width at most 4 and $$\deg(v) + \deg(w) \geq \max(10, \deg(v),\deg(w))+2$$ 
 for each edge $vw\in E(G)$.
  Then $G$ has a $(4,10)$-substructure $(V,W,{u})$ and there is a subset $W' \subset W$ of size 4
  for which $|N(W')| \leq 4$ and one of the following holds:
\begin{enumerate}[label=\emph{\roman*})]  
  \item  \label{itm:pw4-1} Each vertex of $W'$ has a degree of at least 3,
  \item  \label{itm:pw4-2} each vertex of $W'$ has a degree of at most 3,
  \item  \label{itm:pw4-3} there are two vertices of degree 2 in $W'$ with the same neighbourhood or
  \item \label{itm:pw4-4}  $G$ has a subgraph as shown in Figure~\ref{fig:pw4-4222}.
  \end{enumerate} 
\end{lemma}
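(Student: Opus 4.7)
The plan is to invoke Lemma~\ref{lem:tree-width-k-configurations} with parameters $k=4$ and $l=10$ to produce a $(4,10)$-substructure $(V,W,u)$, and then to exhibit a suitable four-element $W' \subset W$. Definition~\ref{def:substructure} immediately supplies $|V| \leq 4$, $\deg(u) \geq 8$, $|W| \geq 4$, and $\deg(w) \leq 4$ for every $w \in W$. Since $N(W) \subset V$, any $W' \subset W$ satisfies $|N(W')| \leq |V| \leq 4$, so the neighbourhood bound in the conclusion is automatic and only the dichotomy between (i)--(iv) needs work.

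Next I would sharpen these into two governing inequalities. Applying the edge condition to each edge $uw$ with $w \in W$, and using $\deg(w) \leq 4 < 10$, yields $\deg(w) \geq \max(12 - \deg(u), 2)$. The containment $W \subset N(u) \subset V \cup W$ gives $\deg(u) \leq |V| - 1 + |W|$, equivalently $|W| \geq \deg(u) - |V| + 1$. These turn $|V|$ and $\deg(u)$ into the controlling parameters.

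The case split is then as follows. If $|V| \leq 3$ every $w$ has $\deg(w) \leq |V| \leq 3$, so an arbitrary four-element $W' \subset W$ meets (ii). If $|V| = 4$ and $\deg(u) \leq 9$ the first bound forces $\deg(w) \geq 3$, and any four-element $W'$ (which exists because $|W| \geq \deg(u) - 3 \geq 5$) meets (i). In the remaining case $|V| = 4$ and $\deg(u) \geq 10$, so $|W| \geq 7$; partition $W = W_2 \sqcup W_3 \sqcup W_4$ by degree. If $|W_3 \cup W_4| \geq 4$ take $W' \subset W_3 \cup W_4$ for (i); if $|W_2 \cup W_3| \geq 4$ take $W' \subset W_2 \cup W_3$ for (ii).

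The main obstacle is this last case. One must rule out simultaneous failure of the two inequalities: adding $|W_2| + |W_3| \leq 3$ and $|W_3| + |W_4| \leq 3$ yields $|W_2| + 2|W_3| + |W_4| \leq 6$, which contradicts $|W| \geq 7$. The refinements (iii) and (iv) can then be read off by pigeonhole on the at most three elements of $V \setminus \{u\}$: four or more degree-$2$ vertices must share a second neighbour and supply a pair realising (iii), while three degree-$2$ vertices with pairwise distinct second neighbours together with a degree-$4$ vertex produce the subgraph of Figure~\ref{fig:pw4-4222} required by (iv). These finer cases appear as strengthenings of (i) or (ii) wherever the structure of $W$ is sharper than the generic bound provides, rather than as independent obligations needed to close the argument.
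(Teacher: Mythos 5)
Your argument reaches the same conclusion by a genuinely different route: instead of assuming (i)--(iii) all fail and deriving the configuration of Figure~\ref{fig:pw4-4222}, as the paper does, you run a pigeonhole on the degree classes $W_2$, $W_3$, $W_4$ of $W$ and conclude that a four-element $W'$ satisfying (i) or (ii) always exists, so that (iii) and (iv) never actually arise. The counting itself is sound, and if one takes the ``furthermore'' clause of Lemma~\ref{lem:tree-width-k-configurations} literally ($|V|\leq k=4$ when the path-width is at most $4$; note this comes from that clause, not from Definition~\ref{def:substructure}, which only gives $|V|\leq 5$), then your proof is complete and in fact proves something stronger than the stated lemma.

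The caveat is that everything hinges on $|V|\leq 4$, which is precisely where the paper is least reliable. The proof of the ``furthermore'' clause really only establishes $|N(W)|\leq k$, and the paper's own proofs of the substructure lemmas --- including its proof of this very statement, which explicitly writes $|V|=5$ --- work with $|V|\leq k+1$ together with $|N(W)|\leq k$. Under that weaker reading your key inequality degrades to $|W|\geq\deg(u)-4\geq 6$, and the pigeonhole then admits the degree profile $|W_2|=|W_4|=3$, $|W_3|=0$, for which no four-element $W'$ satisfies (i) or (ii); this is exactly the situation that cases (iii) and (iv) exist to absorb, and it is why the paper treats (iv) as a live case needing the separate machinery of Lemma~\ref{lem:pw4-ins}. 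Your closing paragraph already contains the correct repair --- among the three degree-$2$ vertices, either two share their second neighbour, giving (iii), or the three second neighbours are distinct, forcing $N(W)=\{u,v_1,v_2,v_3\}$ and hence forcing any degree-$4$ vertex of $W$ to be joined to all of $N(W)$, which is Figure~\ref{fig:pw4-4222} --- but it should be promoted from an optional ``strengthening'' to the step that actually closes this residual case.
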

\begin{proof}
 By the assumptions $G$ has a $(4,10)$-substructure $(V,W,{u})$ as stated in Lemma~\ref{lem:tree-width-k-configurations}.
 There is one vertex
 $v' \in V$ to which none of the vertices of $W$ are connected. So we have $|N(W)| \leq 4$ and $|W| \geq 4$ by Definition~\ref{def:substructure}(\ref{itm:7}). 
 We will assume that~\ref{itm:pw4-1}–\ref{itm:pw4-3} do not hold. 
 Because~\ref{itm:pw4-1} does not hold and $|N(W)| \leq 4$, there is at least one vertex $w_4 \in W$ of degree 2. 
 The hypothesis of the lemma implies $deg(w) \geq 2$ for
 every vertex $w$. This yields that 
 $$\deg({u}) \geq \max(10,\deg({u}),\deg(w_4)) +2 - \deg(w_4) \geq 12 - \deg(w_4) = 10.$$ As $V$ and $W$ are disjoint, $|V| = 5$ and 
 by  Definition~\ref{def:substructure}(\ref{itm:8}) we have
 $|W| \geq |N({u}) \setminus (V \setminus \{{u}\})| \geq \deg({u}) - 4 \geq 6$. On the one hand, as~\ref{itm:pw4-2} does not 
 hold there is at least one
 vertex $w_1 \in W$ of degree 4. On the other hand, as~\ref{itm:pw4-1} does not hold and $|W| \geq 6,$
  there are two more vertices $w_2$ and $w_3 \in W\setminus \{w_1,w_4\}$
 of degree 2. As $w_2,$ $w_3$ and $w_4 \in N({u})$ and~\ref{itm:pw4-3} does not hold, we get the subgraph shown in Figure~\ref{fig:pw4-4222}.
\end{proof}
 
\subsection{{\emph{K}}-trees}
 A $k$-tree is a graph that can be constructed from a complete graph on $k+1$ vertices by 
 iteratively \emph{adding}
 a new vertex and connecting it to all vertices of a complete subgraph of order $k$. It is easy to see that a $k$-tree has
 tree-width $k$. In fact each graph of tree-width $k$ is a subgraph of some $k$-tree. So as said in the introduction, $k$-trees are edge-maximal
 graphs of tree-width $k$ (see \cite{opac-b1131120} for details).
 The following lemma characterises 3-trees of maximum degree at most
 6. We will colour these in the next section and use this with Theorem~\ref{thm:simple-results} to prove Theorem~\ref{thm:3-trees} 
 in Section~\ref{sec:results}.

\begin{figure}
\centering
\tikzstyle{vertex}=[circle,draw,inner sep=0pt,font=\small]
\tikzstyle{edge} = [draw,-]
\tikzstyle{weight} = [font=\small]

\begin{subfigure}[b]{0.4\textwidth}\centering
 \begin{tikzpicture}[scale=0.5]

    \foreach \pos/\name in {{(1,1)/v_3}, {(-2,-1)/v_2}, {(4,-1)/v_4},
                            {(1,4)/v_1}}
       \node[vertex, align=center] (\name) at \pos {$\name$};
    \foreach \source/ \dest /\weight in {v_1/v_2/2, v_1/v_3/3, v_1/v_4/3,
                                         v_2/v_3/3, v_2/v_4/3,
                                         v_3/v_4/2}
       \path[edge] (\source) -- node[weight] {} (\dest);

    \foreach \vertex  in {}
        \path node[selected vertex] at (\vertex) {};

 \end{tikzpicture}
 \caption{}
\label{fig:small-3-trees-1}
\end{subfigure}
\begin{subfigure}[b]{0.4\textwidth}\centering
 \begin{tikzpicture}[scale=0.5]

    \foreach \pos/\name in {{(1,1)/v_3}, {(-2,-1)/v_2}, {(4,-1)/v_4},
                            {(1,4)/v_1},{(0,1)/v_5}}
       \node[vertex, align=center] (\name) at \pos {$\name$};
    \foreach \source/ \dest /\weight in {v_1/v_2/2, v_1/v_3/3, v_1/v_4/3, v_1/v_5/2, 
                                         v_2/v_3/3, v_2/v_4/3, v_2/v_5/3,
                                         v_3/v_4/2, v_3/v_5/2}
       \path[edge] (\source) -- node[weight] {} (\dest);

    \foreach \vertex  in {}
        \path node[selected vertex] at (\vertex) {};

 \end{tikzpicture}

\caption{}
\label{fig:small-3-trees-2}
\end{subfigure}
\begin{subfigure}[b]{0.4\textwidth}\centering
 \begin{tikzpicture}[scale=0.5]

    \foreach \pos/\name in {{(1,1)/v_3}, {(-2,-1)/v_2}, {(4,-1)/v_4},
                            {(1,4)/v_1},{(-3,1)/v_5},{(-1,4)/v_6}}
       \node[vertex, align=center] (\name) at \pos {$\name$};
    \foreach \source/ \dest /\weight in {v_1/v_2/2, v_1/v_3/3, v_1/v_4/3, v_1/v_5/2, v_1/v_6/3,
                                         v_2/v_3/3, v_2/v_4/3, v_2/v_5/3,
                                         v_3/v_4/2, v_3/v_5/2, v_3/v_6/3,
                                         v_2/v_6/3} 
       \path[edge] (\source) -- node[weight] {} (\dest);

    \foreach \vertex  in {}
        \path node[selected vertex] at (\vertex) {};

 \end{tikzpicture}\caption{}
\label{fig:small-3-trees-3}
\end{subfigure}
\begin{subfigure}[b]{0.4\textwidth}\centering
 \begin{tikzpicture}[scale=0.5]

    \foreach \pos/\name in {{(1,1)/v_3}, {(-2,-1)/v_2}, {(4,-1)/v_4},
                            {(1,4)/v_1},{(0,1)/v_5},{(2,1)/v_6}}
       \node[vertex, align=center] (\name) at \pos {$\name$};
    \foreach \source/ \dest /\weight in {v_1/v_2/2, v_1/v_3/3, v_1/v_4/3, v_1/v_5/2, v_1/v_6/3,
                                         v_2/v_3/3, v_2/v_4/3, v_2/v_5/3,
                                         v_3/v_4/2, v_3/v_5/2, v_3/v_6/3,
                                         v_4/v_6/3} 
       \path[edge] (\source) -- node[weight] {} (\dest);

    \foreach \vertex  in {}
        \path node[selected vertex] at (\vertex) {};

 \end{tikzpicture}\caption{}
\label{fig:small-3-trees-4}
\end{subfigure}
\begin{subfigure}[b]{0.4\textwidth}\centering
 \begin{tikzpicture}[scale=0.5]

    \foreach \pos/\name in {{(1,1)/v_3}, {(-2,-1)/v_2}, {(4,-1)/v_4},
                            {(1,4)/v_1},{(-3,1)/v_5},{(-1,4)/v_6}, {(2,1)/v_7}}
       \node[vertex, align=center] (\name) at \pos {$\name$};
    \foreach \source/ \dest /\weight in {v_1/v_2/2, v_1/v_3/3, v_1/v_4/3, v_1/v_5/2, v_1/v_6/3,v_1/v_7/3,
                                         v_2/v_3/3, v_2/v_4/3, v_2/v_5/3,
                                         v_3/v_4/2, v_3/v_5/2, v_3/v_6/3,v_3/v_7/3,
                                         v_2/v_6/3,v_4/v_7/3} 
       \path[edge] (\source) -- node[weight] {} (\dest);

    \foreach \vertex  in {}
        \path node[selected vertex] at (\vertex) {};

 \end{tikzpicture}\caption{}
\label{fig:small-3-trees-5}
\end{subfigure}
\begin{subfigure}[b]{0.4\textwidth}\centering
 \begin{tikzpicture}[scale=0.5]

    \foreach \pos/\name in {{(1,1)/v_3}, {(-2,-1)/v_2}, {(4,-1)/v_4},
                            {(1,4)/v_1},{(-3,1)/v_5},{(-1,4)/v_6}, {(-2,3)/v_7}}
       \node[vertex, align=center] (\name) at \pos {$\name$};
    \foreach \source/ \dest /\weight in {v_1/v_2/2, v_1/v_3/3, v_1/v_4/3, v_1/v_5/2, v_1/v_6/3,v_1/v_7/3,
                                         v_2/v_3/3, v_2/v_4/3, v_2/v_5/3,v_2/v_7/3,
                                         v_3/v_4/2, v_3/v_5/2, v_3/v_6/3,v_3/v_7/3,
                                         v_2/v_6/3} 
       \path[edge] (\source) -- node[weight] {} (\dest);

    \foreach \vertex  in {}
        \path node[selected vertex] at (\vertex) {};

 \end{tikzpicture}\caption{}
\label{fig:small-3-trees-6}
\end{subfigure}
\begin{subfigure}[b]{0.4\textwidth}\centering
 \begin{tikzpicture}[scale=0.5]

    \foreach \pos/\name in {{(1,1)/v_3}, {(-2,-1)/v_2}, {(4,-1)/v_4},
                            {(1,4)/v_1},{(0,1)/v_5},{(2,1)/v_6},{(1,0)/v_7}}
       \node[vertex, align=center] (\name) at \pos {$\name$};
    \foreach \source/ \dest /\weight in {v_1/v_2/2, v_1/v_3/3, v_1/v_4/3, v_1/v_5/2, v_1/v_6/3,
                                         v_2/v_3/3, v_2/v_4/3, v_2/v_5/3, v_2/v_7/3,
                                         v_3/v_4/2, v_3/v_5/2, v_3/v_6/3,v_3/v_7/3,
                                         v_4/v_6/3, v_4/v_7/3} 
       \path[edge] (\source) -- node[weight] {} (\dest);

    \foreach \vertex  in {}
        \path node[selected vertex] at (\vertex) {};

 \end{tikzpicture}\caption{}
\label{fig:small-3-trees-7}
\end{subfigure}
\begin{subfigure}[b]{0.4\textwidth}\centering
 \begin{tikzpicture}[scale=0.5]
    \foreach \pos/\name in {{(1,1)/v_3}, {(-2,-1)/v_2}, {(4,-1)/v_4},
                            {(1,4)/v_1},{(0,1)/v_5},{(2,1)/v_6},{(3,4)/v_8}, {(1,0)/v_7}}
       \node[vertex, align=center] (\name) at \pos {$\name$};
    \foreach \source/ \dest /\weight in {v_1/v_2/2, v_1/v_3/3, v_1/v_4/3, v_1/v_5/2, v_1/v_6/3, v_1/v_8/3,
                                         v_2/v_3/3, v_2/v_4/3, v_2/v_5/3,v_2/v_7/3,
                                         v_3/v_4/2, v_3/v_5/2, v_3/v_6/3, v_6/v_8/3,v_3/v_7/3,
                                         v_4/v_6/3, v_4/v_8/3, v_4/v_7/3} 
       \path[edge] (\source) -- node[weight] {} (\dest);

    \foreach \vertex  in {}
        \path node[selected vertex] at (\vertex) {};

 \end{tikzpicture}\caption{}
\label{fig:small-3-trees-8}
\end{subfigure}
\begin{subfigure}[b]{0.4\textwidth}\centering
 \begin{tikzpicture}[scale=0.5]

    \foreach \pos/\name in {{(1,1)/v_3}, {(-2,-1)/v_2}, {(4,-1)/v_4},
                            {(1,4)/v_1},{(0,1)/v_5},{(2,1)/v_6}, {(1,0)/v_7}, {(1,5)/v_8}}
       \node[vertex, align=center] (\name) at \pos {$\name$};
    \foreach \source/ \dest /\weight in {v_1/v_2/2, v_1/v_3/3, v_1/v_4/3, v_1/v_5/2, v_1/v_6/3, v_1/v_8/3,
                                         v_2/v_3/3, v_2/v_4/3, v_2/v_5/3, v_2/v_7/3, v_2/v_8/3,
                                         v_3/v_4/2, v_3/v_5/2, v_3/v_6/3, v_3/v_7/3, v_4/v_8/3, 
                                         v_4/v_6/3, v_4/v_7/3} 
       \path[edge] (\source) -- node[weight] {} (\dest);

    \foreach \vertex  in {}
        \path node[selected vertex] at (\vertex) {};

 \end{tikzpicture}\caption{}
\label{fig:small-3-trees-9}
\end{subfigure}
\caption{Some  3-trees}
\label{fig:small-3-trees}
\end{figure}
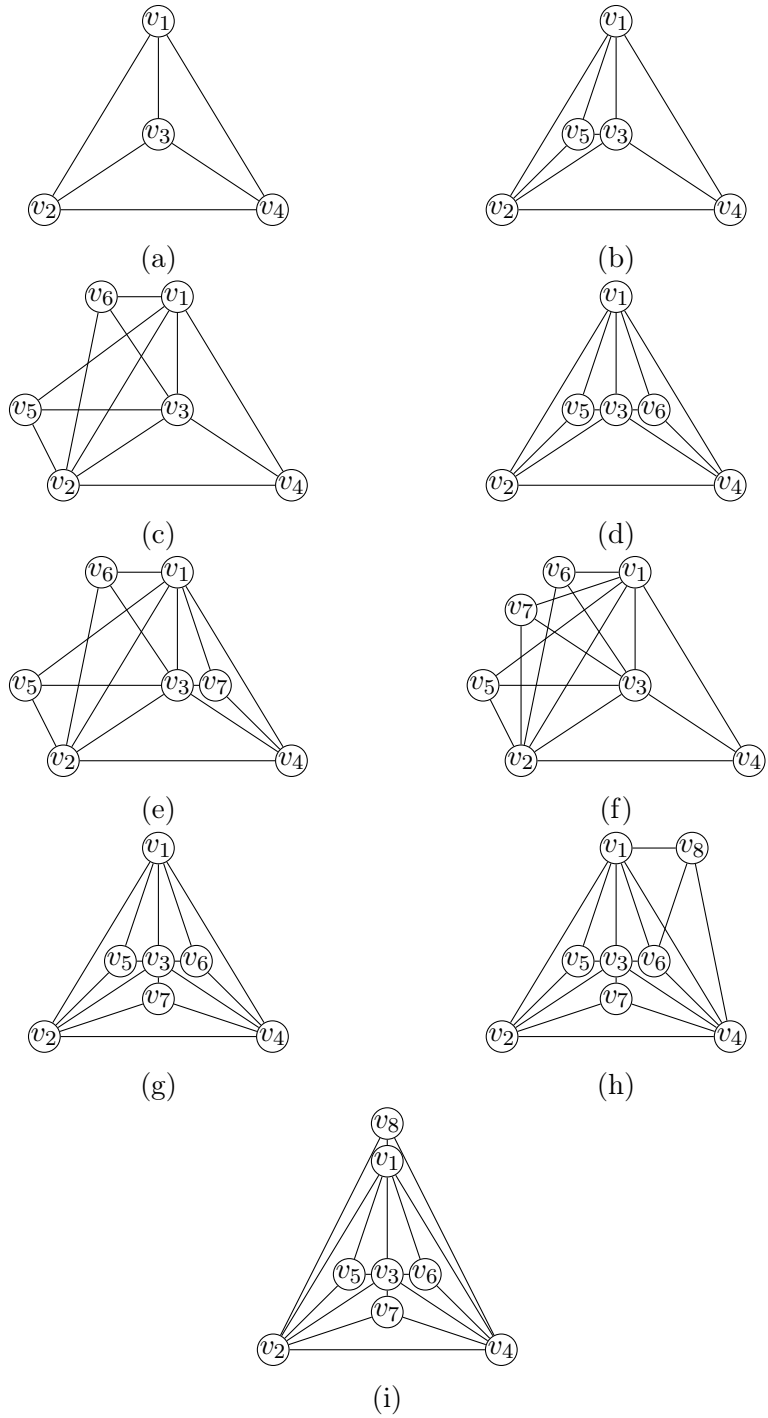

\begin{lemma}
\label{lem:small-3-trees}
 Let $G$ be a 3-tree of maximum degree at most 6. Then $G$ has path-width 3 or is isomorphic to one of 
 the graphs shown in Figure~\ref{fig:small-3-trees}.
\end{lemma}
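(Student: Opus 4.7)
The plan is to characterise exactly those 3-trees $G$ with $\Delta(G) \leq 6$ that fail to have path-width $3$, show that they all have at most $8$ vertices, and match them to graphs in Figure~\ref{fig:small-3-trees}. Since every 3-tree has tree-width $3$, ``not path-width $3$'' means path-width $\geq 4$. Note that several of the listed graphs---for instance Figure~\ref{fig:small-3-trees-1} through Figure~\ref{fig:small-3-trees-6}---actually do have path-width $3$, and are included only because they are treated individually in the colouring argument of Section~\ref{sec:results}; they do not need a separate argument here.

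The first step is to use the standard fact that a 3-tree $G$ has path-width $3$ if and only if its 4-cliques admit a linear arrangement in which consecutive 4-cliques share a triangle. When this fails, one can locate a 4-clique $C_0$ and three further 4-cliques $C_1, C_2, C_3$ such that each $T_i := C_0 \cap C_i$ is a triangle and the three triangles $T_1, T_2, T_3$ are pairwise distinct. The delicate point is handling the non-uniqueness of the clique tree: when several 4-cliques share a common triangle they form a ``fan'' that can always be laid out as a path, so the obstruction to path-width $3$ must come from a 4-clique with three genuinely different branching triangles.

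Next I would run a tight degree argument. Because the four triangles of a 4-clique are indexed by which vertex they omit, any three of them share exactly one vertex, so $T_1 \cap T_2 \cap T_3 = \{u\}$ for some unique $u \in C_0$. Writing $C_0 = \{u, a, b, c\}$ and $C_i = T_i \cup \{x_i\}$ with $x_i \notin C_0$, the six vertices $a, b, c, x_1, x_2, x_3$ are all neighbours of $u$, so $\deg(u) \geq 6$. The hypothesis $\Delta(G) \leq 6$ forces equality and pins down $N(u) = \{a, b, c, x_1, x_2, x_3\}$ exactly, so no further 4-clique of $G$ may contain $u$. Any additional vertex of $G$ therefore has to be attached (in the 3-tree construction) via a triangle of the current 7-vertex ``tripod'' not containing $u$, and the only such triangles are $\{a, b, c\}$ together with the three side triangles $\{a, b, x_1\}, \{a, c, x_2\}, \{b, c, x_3\}$.

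In the tripod $a, b, c$ already have degree $5$, so attaching one further vertex to any of those four triangles immediately raises two or three of these degrees to $6$, saturating them and blocking any second attachment under $\Delta \leq 6$. A short case distinction, combined with the symmetry group of the tripod permuting the three side triangles, yields exactly three graphs up to isomorphism: the bare tripod on $7$ vertices (Figure~\ref{fig:small-3-trees-7}), the tripod extended through $\{a, b, c\}$ on $8$ vertices (Figure~\ref{fig:small-3-trees-9}), and the tripod extended through a side triangle on $8$ vertices (Figure~\ref{fig:small-3-trees-8}). These are all the 3-trees with $\Delta \leq 6$ and path-width $\geq 4$, proving the lemma. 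The main obstacle is the first step: rigorously deriving the branching-4-clique obstruction from the failure of path-width $3$ in the face of non-unique clique trees. After that, everything reduces to tracking saturated degrees in the tripod and enumerating a handful of small cases.
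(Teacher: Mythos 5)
Your second half --- the degree-saturation analysis of the ``tripod'' and the enumeration of its one-vertex extensions, yielding the graphs of Figures~\ref{fig:small-3-trees-7}, \ref{fig:small-3-trees-8} and~\ref{fig:small-3-trees-9} --- is correct and essentially matches the paper. The gap is in your first step, which you yourself flag as the main obstacle but then assert rather than prove: the claim that a 3-tree of path-width at least $4$ must contain a single $4$-clique $C_0$ meeting three other $4$-cliques in three \emph{pairwise distinct} triangles. Your justification (``fans around a common triangle can always be laid out as a path, so the obstruction must be a branching $4$-clique'') is a local argument used to draw a global conclusion, and as a statement about general 3-trees it is false. Take $K_4$ on $\{a,b,c,x_1\}$, attach $x_2$ and $x_3$ to $\{a,b,c\}$, then attach $y_1$ to $\{a,b,x_1\}$, $y_2$ to $\{a,c,x_2\}$ and $y_3$ to $\{b,c,x_3\}$. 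Every $4$-clique of this 3-tree meets the other $4$-cliques in at most two distinct triangles, yet its path-width is at least $4$: in any width-$3$ path decomposition each $4$-clique must be a bag, and the intervals of bags containing $a$, $b$ and $c$ force each of $\{a,b,x_1,y_1\}$, $\{a,c,x_2,y_2\}$, $\{b,c,x_3,y_3\}$ to lie outside the interval spanned by the other five bags, which is impossible on a path. This example has $\Delta=7$, so it does not contradict the lemma; but your step~1 is stated and justified without any appeal to the degree bound, so the degree bound cannot rescue it as written.

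The reduction you want is true under $\Delta\le 6$, but proving it there essentially requires the case analysis you are trying to skip. The paper avoids the issue entirely: it fixes one smooth tree decomposition (all bags of size $4$, adjacent bags meeting in $3$ vertices); if its tree is a path, $G$ has path-width $3$ and we are done, and otherwise some bag $V_t$ has three neighbours $V_{t_1},V_{t_2},V_{t_3}$, and \emph{all three} coincidence patterns of the attachment triangles, $|V_{t_1}\cap V_{t_2}\cap V_{t_3}|=3$, $2$ or $1$, are worked out, yielding Figures~\ref{fig:small-3-trees-6}, \ref{fig:small-3-trees-5} and~\ref{fig:small-3-trees-7}; degree saturation then shows the first two cases terminate immediately and the third extends only to Figures~\ref{fig:small-3-trees-8} and~\ref{fig:small-3-trees-9}. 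Since the lemma is a disjunction, it is harmless that Figures~\ref{fig:small-3-trees-5} and~\ref{fig:small-3-trees-6} in fact have path-width $3$; the paper never needs, and you should not claim without proof, an exact characterisation of the 3-trees of path-width at least~$4$. To repair your argument, either prove your step~1 under the hypothesis $\Delta\le6$ (which will reintroduce the coincidence cases), or adopt the paper's framing of fixing a decomposition and branching on whether its tree is a path.
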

\begin{proof}
 We will construct 3-trees and see that the maximum degree grows quickly larger than 6. 
 We start with the $K_4$ in Figure~\ref{fig:small-3-trees-1}. If we add a vertex, we get the graph shown in Figure
~\ref{fig:small-3-trees-2}. Adding another vertex yields the graph in Figure~\ref{fig:small-3-trees-3} 
 or~\ref{fig:small-3-trees-4} by symmetry. As any triangle in the last two graphs contains a vertex of degree 5, adding any other
 vertex will raise the maximum degree to 6. Therefore the graphs shown in the Figures~\ref{fig:small-3-trees-1}--\ref{fig:small-3-trees-4} 
 are exactly the 3-trees of maximum degree at most 5. Now let $G$ be 3-tree of maximum degree 6 with a width 3 tree decomposition $(T,\mathcal{V})$,
 such that each bag has exactly four vertices and for all $tt' \in E(T)$ we have $|V_{t} \cap V_{tt'}| = 3$. 
 If $T$ is a path, we are done. Consequently 
 let $t \in V(T)$ be a vertex with neighbours $t_1,$ $t_2$ and 
 $t_3 \in V(T)$. Set $X = V_t \cup V_{t_1} \cup V_{t_2} \cup V_{t_3}$. 
 Then $|V_{t_1} \cap V_{t_2} \cap V_{t_3}| = 1,$ $2$ or $3$. In
 these cases the graph induced by $X,$ $G[X],$ is isomorphic to the graph shown in Figure~\ref{fig:small-3-trees-7},~\ref{fig:small-3-trees-5} 
 or~\ref{fig:small-3-trees-6}. 
 Observe that any triangle in the graphs of Figure~\ref{fig:small-3-trees-5} and~\ref{fig:small-3-trees-6}
 has at least one vertex of degree 6, which yields $G[X] = G$  as $\Delta(G) = 6$. 
 So let $G[X]$ be the graph shown in Figure~\ref{fig:small-3-trees-7} and assume that there is at least one more vertex 
 $v_8 \in V(G) \setminus X$. 
 Up to symmetry there are only two triangles in $G[X],$ which do not already contain a vertex of degree 6 and 
 to whose vertices another vertex $v_8$ can be
 connected without raising the maximum degree. This results in one of the graphs shown in Figures~\ref{fig:small-3-trees-8} and~\ref{fig:small-3-trees-9}. As 
 all triangles in these graphs contain at least one vertex of degree 6, the graphs 
 of~Figure~\ref{fig:small-3-trees-5}--\ref{fig:small-3-trees-9} cover all 3-trees of maximum degree at most 6 that may not have a width $3$ path decomposition. 
\end{proof}

\section{Colouring substructures}
\label{sec:instances}
 In this section we will solve the instances of list edge-colouring related to the substructures we have just found.
 We will generally assume that the size of a list is exactly the size of its respective lower bound. 
 We can always try to \emph{colour $G$ semi-greedily}, by iteratively colouring 
 an edge with a smallest list of remaining colours with an arbitrary available colour. 
  The following result has already been mentioned. We will apply it several times.
 \begin{theorem}[Galvin, 1994]
 \label{thm:galvin}
 Let $G$ be a bipartite graph; then ${\chi'(G)=\ch'(G)}$. 
 \end{theorem}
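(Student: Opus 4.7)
The plan is to invoke the kernel method, which reduces list edge-colouring of a bipartite graph to the existence of stable matchings in its subgraphs. First I would apply König's edge-colouring theorem to obtain a proper edge-colouring $c \colon E(G) \to \{1, \ldots, \Delta\}$ where $\Delta = \chi'(G)$. Writing $A \cup B$ for the bipartition of $G$, I would use $c$ to orient the line graph $L(G)$ by the rule: for two adjacent edges $e,f$ sharing a vertex $a \in A$, orient $e \to f$ whenever $c(e) < c(f)$; for two adjacent edges sharing a vertex $b \in B$, orient $e \to f$ whenever $c(e) > c(f)$. A short count shows that every vertex of this oriented $L(G)$ has out-degree exactly $\Delta - 1$.

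The main lever is the Bondy--Boppana--Siegel lemma: if $D$ is a digraph in which every induced subdigraph has a \emph{kernel} (an independent set that dominates all remaining vertices via out-arcs), and $L$ is a list assignment with $|L(v)| > d_D^+(v)$ for every $v$, then $D$ admits a proper $L$-colouring. Applied to the orientation above, this reduces the theorem to proving that every induced subdigraph of $L(G)$ has a kernel.

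For the kernel step, I would translate the condition into matching language. A kernel of the subdigraph induced on $S \subseteq E(G)$ is a matching $K \subseteq S$ such that every $e \in S \setminus K$ has an out-neighbour in $K$, i.e.\ there is $f \in K$ meeting $e$ at some endpoint where $f$ beats $e$ under the orientation rule. If we turn $S$ into a preference system where each $a \in A$ ranks its incident edges by higher colour first and each $b \in B$ ranks its incident edges by lower colour first, this is precisely the definition of a stable matching. Gale--Shapley guarantees that such a stable matching exists, so kernels exist in every induced subdigraph and the proof is complete.

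The conceptual hurdle is the dictionary between kernels of the oriented line graph and stable matchings in the bipartite subgraph; once the orientation is chosen so that the two preference orders (low-to-high on $A$ and high-to-low on $B$) are compatible, Gale--Shapley and the Bondy--Boppana--Siegel lemma combine essentially automatically. The fact that König's theorem gives us a \emph{proper} $\Delta$-edge-colouring to start from is what ensures the out-degree bound $\Delta - 1$ and thereby matches the list-size hypothesis $|L(e)| \geq \Delta$.
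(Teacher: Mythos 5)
This is Theorem~\ref{thm:galvin} in the paper, which is quoted from the literature (Galvin, and the short proof by Slivnik) and not proved here at all, so there is no in-paper argument to compare against. Your proposal is a faithful reconstruction of Galvin's actual proof: K\H{o}nig's theorem supplies a proper $\Delta$-edge-colouring, the colouring induces an orientation of the line graph, the Bondy--Boppana--Siegel kernel lemma reduces $L$-colourability to the existence of kernels in induced subdigraphs, and the kernel condition translates exactly into stability of a matching under the two opposite preference orders on the sides $A$ and $B$, which Gale--Shapley provides. The one small inaccuracy is the claim that every vertex of the oriented line graph has out-degree \emph{exactly} $\Delta-1$: for an edge of colour $i$ the out-degree is at most $(\Delta-i)+(i-1)=\Delta-1$, with equality only when both endpoints see all $\Delta$ colours (e.g.\ after embedding $G$ in a $\Delta$-regular bipartite graph). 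Since the kernel lemma only needs $|L(e)|>d^+(e)$, the bound ``at most $\Delta-1$'' already suffices and the argument goes through as written.
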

 For a graph $G$ with an assignment of lists $L$ to the edges of $G$ and ${e,f \in E(G)}$ we call two colours $c_1 \in L(e)$
 and $c_2 \in L(f)$ \emph{compatible} if $c_1 = c_2$ or for each edge $g$ that is adjacent to both $e$ and $f$ the list $L(g)$ contains
 at most one of the two colours $c_1$ and $c_2$.
 The following lemma turns out to be quite useful in order to solve instances of list edge-colouring with small graphs. The idea of the proof can be extracted from 
 \cite{Cariolaro_theedge-choosability}.
 \begin{lemma}
 \label{lem:cariolaro}
 Let $G$ be a graph with an assignment of lists $L$ to the edges of $G$ 
 and let $v_1v_2,$ $w_1w_2 \in E(G)$ be two non-adjacent edges. If 
  \begin{equation} \label{equ:cariolaro}
  |L({v_1v_2})||L({w_1w_2})| > \sum_{v_iw_j \in E(G)} \lfloor\frac{|L({v_iw_j})|}{2}\rfloor  \lceil\frac{|L({v_iw_j})|}{2}\rceil,
 \end{equation}
 then there are
  compatible colours $c_1 \in L({v_1v_2})$ and $c_2 \in L({w_1w_2})$.
\end{lemma}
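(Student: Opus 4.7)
My plan is to count the ``bad'' (non-compatible) ordered pairs $(c_1,c_2) \in L(v_1v_2) \times L(w_1w_2)$ and use \eqref{equ:cariolaro} to conclude that they cannot exhaust the whole rectangle. The argument splits naturally into two cases depending on whether $L(v_1v_2)$ and $L(w_1w_2)$ share a colour.

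If $L(v_1v_2) \cap L(w_1w_2) \neq \emptyset$, I pick any $c$ in the intersection; then $(c,c)$ is compatible by the very first clause of the definition, and we are done. So assume $L(v_1v_2) \cap L(w_1w_2) = \emptyset$. The edges $g$ that are adjacent to both $v_1v_2$ and $w_1w_2$ are precisely those of the form $g = v_iw_j \in E(G)$ with $i,j \in \{1,2\}$, which is exactly the index set of the sum in \eqref{equ:cariolaro}. A pair $(c_1,c_2)$ is bad iff some such $g$ contains both colours; under the disjointness assumption the requirement $c_1 \neq c_2$ is then automatic, so the pairs bad from $g$ form exactly $A_g \times B_g$, where $A_g := L(v_1v_2) \cap L(g)$ and $B_g := L(w_1w_2) \cap L(g)$.

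The key observation is that disjointness of $L(v_1v_2)$ and $L(w_1w_2)$ forces $A_g$ and $B_g$ to be disjoint subsets of $L(g)$, so $|A_g| + |B_g| \leq |L(g)|$. Combined with the elementary inequality $xy \leq \lfloor k/2 \rfloor \lceil k/2 \rceil$ whenever $x+y \leq k$, this yields $|A_g||B_g| \leq \lfloor |L(g)|/2 \rfloor \lceil |L(g)|/2 \rceil$. A union bound over $g$, together with hypothesis \eqref{equ:cariolaro}, then shows that the total number of bad pairs is strictly less than $|L(v_1v_2)||L(w_1w_2)|$, so at least one compatible pair survives.

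The only subtle point, which is the main thing to get right, is the case split: without the disjointness of $L(v_1v_2)$ and $L(w_1w_2)$ one cannot conclude $A_g \cap B_g = \emptyset$, and the bound $|A_g||B_g| \leq \lfloor |L(g)|/2 \rfloor \lceil |L(g)|/2 \rceil$ fails in general (if both lists equal $L(g)$ the number of off-diagonal pairs from $g$ is $|L(g)|(|L(g)|-1)$, far larger than $\lfloor |L(g)|/2\rfloor \lceil |L(g)|/2 \rceil$). Once one notices that the shared-colour case is disposed of for free by the $c_1=c_2$ clause, the remainder is essentially a one-line union bound.
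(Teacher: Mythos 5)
Your proof is correct and follows essentially the same route as the paper: dispose of the shared-colour case via the $c_1=c_2$ clause, then under disjointness count the bad pairs contributed by each of the at most four edges $v_iw_j$ and compare with \eqref{equ:cariolaro}. The only difference is that you spell out the justification (disjointness of $A_g$ and $B_g$ plus the inequality $xy\leq\lfloor k/2\rfloor\lceil k/2\rceil$ for $x+y\leq k$) for the counting bound that the paper merely asserts.
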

 \begin{proof}
  If the lists
  of the edges $v_1v_2$ and $w_1w_2$ share a colour $c$ we are done.
  Therefore assume that $L({v_1v_2}) \cap L({w_1w_2}) = \emptyset$. This yields that there are  $|L({v_1v_2})||L({w_1w_2})|$ pairs of
  distinct colours $(c,c)$ with $c \in L({v_1v_2})$ and $c' \in L({w_1w_2})$. But an edge $v_iw_j \in E(G)$ 
  for $1\leq i,j\leq 2$ can contain both colours of at most
  $\lfloor\frac{|L({v_iw_j})|}{2}\rfloor  \lceil\frac{|L({v_iw_j})|}{2}\rceil$ of those pairs.
  So if (\ref{equ:cariolaro}) holds the desired compatible colours
  $c_1 \in L({v_1v_2})$ and $c_2 \in L({w_1w_2})$ exist.
 \end{proof}
 Remark that (\ref{equ:cariolaro}) holds, if all involved lists have a size of exactly $k,$ where $k$ is an odd number. It also holds if all
 involved lists have the same size and at least one of the four edges $v_iw_j$ is missing from $G.$
 
\subsection{Small 3-trees}
 We now analyse the list chromatic index of the 3-trees, which we have found in Lemma~\ref{lem:small-3-trees}.
 \begin{lemma}
 \label{lem:small_graphs_of_tree-width_three}
 Let $G$ be one of the graphs shown in Figure~\ref{fig:small-3-trees}. Then ${\ch'(G) = \chi'(G)}.$
\end{lemma}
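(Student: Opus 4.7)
The proof proceeds by exhaustive case analysis over the nine graphs of Figure~\ref{fig:small-3-trees}. For each graph $G$ I would first compute $\chi'(G)$: cases (e)--(i) have $\Delta(G) = 6$, so Theorem~\ref{thm:nishizeki} gives $\chi'(G) = 6$; case (a) is $K_4$, with $\chi' = 3$; case (b) is $K_5$ minus an edge, which is overfull ($9 > 4 \cdot 2$) and hence Class~2 with $\chi' = 5$; for cases (c) (which is $K_6 - K_3$) and (d) (both with $\Delta = 5$) a direct $5$-edge-colouring can be exhibited.

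To establish $\ch'(G) \leq \chi'(G)$ the plan is a uniform peeling scheme. For each $G$ I pick a small set $F \subseteq E(G)$ of one or two edges such that the edges of $G - F$ partition into classes amenable to Galvin's theorem (Theorem~\ref{thm:galvin}) --- typically a bipartite subgraph. One colours $G - F$ using Galvin from the assigned lists and then extends to $F$ from the lists of remaining colours. For instance, in case (c) the edge set of $G$ decomposes as the triangle on $\{v_1,v_2,v_3\}$ together with a spanning $K_{3,3}$; one colours the triangle edges from their (size $5$) lists by Hall's theorem and then applies Galvin to the $K_{3,3}$, whose lists of remaining colours each retain size at least $3 = \chi'(K_{3,3})$. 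When $F$ consists of two non-adjacent edges, Lemma~\ref{lem:cariolaro} lets us reserve compatible colours in advance; when $F$ is a single edge we count to ensure its list of remaining colours is non-empty.

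The main obstacle is case (b). $K_5 - e$ is Class~2, so Galvin does not apply and no obvious greedy approach works. With lists of size exactly $5 = \Delta + 1$ on each of the nine edges, I would fix an arbitrary list assignment $L$ and argue directly: apply Lemma~\ref{lem:cariolaro} to a well-chosen pair of non-adjacent edges (for instance $v_1v_4$ and $v_2v_3$) to fix compatible colours there, and then complete the colouring semi-greedily. The single unit of slack provided by list size $5$ over $\Delta = 4$ is exactly what lets the last edge always find an available colour; verifying this without overlooking an obstruction caused by the triangle on $\{v_1, v_2, v_3\}$ is the delicate step.
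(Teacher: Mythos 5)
Your overall strategy --- compute $\chi'$ for each of the nine graphs, then colour case by case via Lemma~\ref{lem:cariolaro}, Theorem~\ref{thm:galvin} and greedy completion --- is the paper's, and your treatment of case (c) coincides with it exactly. But there is a genuine gap at the point you yourself flag, namely case (b) (Figure~\ref{fig:small-3-trees-2}), and your plan there differs from the paper's in a way that matters. After a single application of Lemma~\ref{lem:cariolaro} to $v_1v_4$ and $v_2v_3$, every one of the seven uncoloured edges is adjacent to at least one of the two coloured edges, so in the worst case each retains exactly $4$ available colours; moreover $v_1v_2$, $v_1v_3$, $v_1v_5$, $v_2v_5$ and $v_3v_5$ each have exactly $4$ uncoloured neighbours. ``Complete semi-greedily'' is therefore not justified: whichever of these edges comes last may see four distinct colours on its four neighbours, exhausting its list. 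The argument can be rescued --- the two edges at $v_4$ have only three remaining neighbours, so an explicit ordering (colour $v_1v_5$ first, leave $v_2v_4$ and $v_3v_4$ for last) makes every edge have strictly fewer previously coloured neighbours than available colours --- but that is precisely the verification you declare delicate and then omit. The paper sidesteps it by applying Lemma~\ref{lem:cariolaro} a \emph{second} time, to $v_1v_3$ and $v_2v_4$, after the first pair $v_1v_2$, $v_3v_5$; the five edges that then remain form a cycle in the line graph and each keeps at least three available colours, so any greedy order finishes.

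Two smaller points. Your ``uniform peeling scheme'' ($G-F$ bipartite for $|F|\leq 2$) cannot be meant literally for cases (e)--(i): these graphs contain the $K_4$ on $v_1,\dots,v_4$ together with further triangles through the remaining vertices, so deleting two edges never leaves a bipartite graph. The paper instead colours that $K_4$ first, by the case-(a) argument (Lemma~\ref{lem:cariolaro} on two independent edges, then Theorem~\ref{thm:galvin} on the remaining $C_4$), and only afterwards applies Galvin to the bipartite remainder, each of whose edges meets coloured edges at one end only and hence retains a list of size at least $6-3=3$; case (h) is handled by first treating $G-v_6v_8$ this way. Case (d) likewise admits no triangle-plus-bipartite decomposition, and the paper deals with it by three successive applications of Lemma~\ref{lem:cariolaro} followed by a semi-greedy finish; your proposal does not address it beyond asserting the value of $\chi'$.
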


\begin{proof}
 Let $G_a$ be the graph shown in Figure~\ref{fig:small-3-trees-1} with an assignment of lists $L_a$ to the edges of $G_a,$
 where each edge has list of size at least $\Delta(G_a) = 3.$ For two non-adjacent edges $e,$ $f \in G_a$, 
 use Lemma~\ref{lem:cariolaro} to pick two compatible colours $c_1 \in L_1(e)$ and $c_2 \in L_1(f)$
 and colour $e$ and $f$ with them. The rest of the graph forms a $K_{2,2}$ whose edges retain enough available colours to 
 apply Theorem~\ref{thm:galvin}.
 
 Let $G_b$ be the graph shown in Figure~\ref{fig:small-3-trees-2}.
 As it has 5 vertices, out of 3 edges at least 2 are adjacent. Since the total number of edges is 9, we have $\chi'(G_2) = \Delta +1 = 5.$ 
 So for a given assignment of lists $L_b$ to the edges of $G_b,$ where each list has a size of at least 5, pick two compatible colours 
 $c_1 \in L_b(v_1v_2)$ and
 $c_2 \in L_b(v_3v_5)$ with Lemma~\ref{lem:cariolaro} and colour these edges with them. We apply Lemma~\ref{lem:cariolaro} another time to
  pick two compatible and available colours $c_3 \in L_b(v_1v_3)$ and $c_4 \in L_b(v_2v_4)$ and colour these edges with them. 
 The rest of the
 graph can be coloured semi-greedily. 
 
 Let $G_c$ be the graph shown in Figure~\ref{fig:small-3-trees-3} with lists of colours $L_c,$ each of size at least $\Delta(G_c) = 5$,
 assigned to its edges. Colour the triangle $v_1,$ $v_2,$ $v_3$ semi-greedily and apply Theorem~\ref{thm:galvin} to the rest.
 
  Let $G_d$ be the graph shown in Figure~\ref{fig:small-3-trees-4} with lists $L_d,$ each of size at least $\Delta(G_d) = 5,$  
  assigned to its edges. Use Lemma~\ref{lem:cariolaro} to pick two compatible colours
 $c_1 \in L_d(v_1v_3)$ and $c_2 \in L_d(v_2v_4)$ and colour the respective edges with them. Apply Lemma~\ref{lem:cariolaro} again to pick
 two compatible and available colours $c_3 \in L_d(v_1v_2)$ and $c_4 \in L_d(v_3v_5).$ Use Lemma~\ref{lem:cariolaro} a last time to pick
 two compatible and available colours ${c_5 \in L_d(v_1v_4)}$ and $c_6 \in L_d(v_3v_6),$ colour the respective edges with these colours 
 and finish semi-greedily.
 
  Let $G$ be one of the graphs shown in Figure~\ref{fig:small-3-trees-5},~\ref{fig:small-3-trees-6},~\ref{fig:small-3-trees-7} 
  or~\ref{fig:small-3-trees-9} with lists $L,$ each of size at least $\Delta(G) = 6,$ 
  assigned to its edges. The vertices $v_1,$ $v_2,$ $v_3$ and $v_4$ induce a $K_4$ in $G.$ Colour its edges as shown in
  the case of Figure~\ref{fig:small-3-trees-1} and observe
  that the rest of the graph forms a bipartite graph whose edges are adjacent to already coloured edges on at most one end. 
  So the lists of remaining colours retain sizes big enough
  to apply Theorem~\ref{thm:galvin}. For the graph $G_h$ shown in Figure~\ref{fig:small-3-trees-8}, we can apply
  the same argument to colour $G_h - v_6v_8$ and then finish semi-greedily afterwards.
\end{proof}

\subsection{Bipartite graphs}
\label{sec:intances-bip}
In the following we will discuss some instances of list edge-colouring with bipartite graphs. Throughout this subsection
$G$ will be a bipartite graph with biparts $V,~W$ of the same size. Vertices denoted as $v,~v'$ etc. will be assumed to
lie in $V$, vertices denoted as $w,~w'$ will lie in $W$.
 We will apply the following refined version of Theorem~\ref{thm:galvin}. It can be found 
 in~\cite{journals/cpc/Slivnik96} and~\cite{journals/jct/BorodinKW97}.

 \begin{theorem}
 \label{thm:slivnik}
 Let $G$ be a bipartite graph with an edge-colouring $\mathcal{C}$
 and an assignment of lists $L$ to the edges of $G$ such that 
\begin{alignat}{3}
&|L({w'v'})|&&\geq|\{ wv' \in E(G) \text{ }; \text{ } \mathcal{C}(wv') > \mathcal{C}(w'v')  \}| \nonumber \\
& &&+ |\{ w'v \in E(G) \text{ }; \text{ }\mathcal{C}(w'v) < \mathcal{C}(w'v')\}| + 1 \nonumber
\end{alignat}
 for each edge $w'v' \in E(G).$ Then there is an $L$-edge-colouring of $G.$
 \end{theorem}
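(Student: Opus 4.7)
The plan is to apply Galvin's orientation-plus-kernel argument directly. I would first orient the line graph $L(G)$ of $G$ as a digraph $D$ by the following rule: for two edges $e, f \in E(G)$ sharing a vertex $v' \in V$ put an arc $e \to f$ iff $\mathcal{C}(f) > \mathcal{C}(e)$, and for two edges sharing a vertex $w' \in W$ put $e \to f$ iff $\mathcal{C}(f) < \mathcal{C}(e)$. Since $\mathcal{C}$ is a proper edge-colouring, the colours of any two adjacent edges differ, so this is a well-defined orientation of $L(G)$. The out-neighbours of a vertex $e = w'v'$ in $D$ are then exactly the edges $wv'$ with $\mathcal{C}(wv') > \mathcal{C}(w'v')$ together with the edges $w'v$ with $\mathcal{C}(w'v) < \mathcal{C}(w'v')$, so the hypothesis of the theorem reads $|L(e)| \geq d^+_D(e) + 1$ for every $e \in E(G)$.

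Next I would invoke the kernel method of Bondy, Boppana and Siegel (in the list form used by Alon): if $D$ is \emph{kernel-perfect}, meaning that every induced subdigraph has a kernel — an independent set $K$ such that every vertex outside $K$ has an out-neighbour in $K$ — and $|L(e)| \geq d^+_D(e) + 1$ for every vertex $e$, then $L(G)$ admits a proper $L$-colouring. Granted kernel-perfectness, this together with the previous paragraph finishes the proof.

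The core task is therefore to show that $D$ is kernel-perfect. An induced subdigraph of $D$ corresponds to some subgraph $H \subseteq G$ carrying the inherited orientation, and a kernel of this subdigraph corresponds to a matching $M$ of $H$ such that every edge of $H$ not in $M$ has an out-arc in $D$ into some edge of $M$. I would construct such an $M$ by running the Gale--Shapley stable matching algorithm on $H$, with each $v' \in V$ preferring incident edges of smaller $\mathcal{C}$-value and each $w' \in W$ preferring incident edges of larger $\mathcal{C}$-value. By construction, stability of $M$ states exactly that for every edge $w'v' \notin M$, either the $M$-edge at $v'$ has smaller $\mathcal{C}$-value than $\mathcal{C}(w'v')$ — giving an arc $w'v' \to M$-edge through $v'$ — or the $M$-edge at $w'$ has larger $\mathcal{C}$-value — giving such an arc through $w'$. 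Hence $M$ is a kernel of the induced subdigraph.

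The hard part is not mathematical depth but careful bookkeeping: fixing the orientation convention and the Gale--Shapley preferences so that the definition of stability lines up precisely with the direction of arcs in $D$, and keeping the strict inequalities consistent throughout. Once this is set up correctly, every remaining step is a standard application of the tools already in play.
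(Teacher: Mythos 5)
Your overall strategy is the right one and is in fact the canonical proof of this statement: the paper itself gives no argument for Theorem~\ref{thm:slivnik} but simply imports it from Slivnik and from Borodin, Kostochka and Woodall, and their proofs are exactly the orientation-plus-kernel argument you describe (orient the line graph so that $d^+$ matches the bound in the hypothesis, prove kernel-perfectness via stable matchings, and apply the Bondy--Boppana--Siegel lemma). The reduction of kernels to stable matchings and the existence of stable matchings in every subgraph (Gale--Shapley with incomplete preference lists) are all fine.

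However, there is a concrete error precisely at the step you flag as the delicate bookkeeping: your Gale--Shapley preferences do not line up with your orientation. With your orientation, the out-arcs of $e=w'v'$ through $v'$ go to edges of \emph{larger} colour, so for $M$ to be a kernel you need every unmatched edge $w'v'$ to see an $M$-edge of larger colour at $v'$ or an $M$-edge of smaller colour at $w'$. But you let $v'$ prefer \emph{smaller} colours and $w'$ prefer \emph{larger} ones; stability then guarantees that the $M$-edge at $v'$ has smaller colour or the $M$-edge at $w'$ has larger colour, which is not the condition for an out-arc into $M$. A two-edge path $w_1v_1w_2$ with $\mathcal{C}(w_1v_1)=1<2=\mathcal{C}(v_1w_2)$ shows the failure: $\{w_1v_1\}$ is stable under your preferences but is not a kernel, since $v_1w_2$ has no out-neighbour in it. The fix is simply to swap the preferences ($v'$ prefers larger colours, $w'$ prefers smaller); with that change the stability condition coincides with the kernel condition and the proof is complete.
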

 
 As a first application we get the following lemma. We will use it later in the proof of Lemma~\ref{lem:cherry-configuration-two-max-deg-7}.

 \begin{figure}
\centering
\tikzstyle{vertex}=[circle,draw,minimum size=14pt,inner sep=0pt]
\tikzstyle{edge} = [draw,-]
\tikzstyle{weight} = [font=\small,draw,fill           = white,
                                  text           = black]

\begin{subfigure}[b]{0.45\textwidth}\centering
 \begin{tikzpicture}[scale=1.1]
    \foreach \pos/\name in {{(1,1)/{u}}, {(-1,-1)/v_1}, {(3,-1)/v_3},
                            {(1,3)/v_2},{(-0.2,1)/w_1},{(2.2,1)/w_2},{(1,-0.2)/w_3}}
       \node[vertex, align=center] (\name) at \pos {$\name$};
    \foreach \source/ \dest /\weight in {v_2/w_1/3, v_2/w_2/2, v_3/w_3/2,
                                          v_1/w_1/2, v_1/w_3/2,
                                          {u}/w_1/4, {u}/w_2/4, {u}/w_3/3,
                                         v_3/w_2/2} 
       \path[edge] (\source) -- node[weight] {$\weight$} (\dest);
 \end{tikzpicture} 
 \caption{The integers indicate the minimum sizes of the respective lists.}
 \label{fig:cherry-configuration-bipartite-list-size}
\end{subfigure}
\begin{subfigure}[b]{0.45\textwidth}\centering
 \begin{tikzpicture}[scale=1.1]
    \foreach \pos/\name in {{(1,1)/u}, {(-1,-1)/v_1}, {(3,-1)/v_3},
                            {(1,3)/v_2},{(-0.2,1)/w_1},{(2.2,1)/w_2},{(1,-0.2)/w_3}}
       \node[vertex, align=center] (\name) at \pos {$\name$};
    \foreach \source/ \dest /\weight in {v_2/w_1/3, v_2/w_2/1, v_3/w_3/1,
                                          v_1/w_1/1, v_1/w_3/2,
                                          u/w_1/2, u/w_2/3, u/w_3/4,
                                         v_3/w_2/2} 
       \path[edge] (\source) -- node[weight] {$\weight$} (\dest);
 \end{tikzpicture} 
 \caption{The integers indicate the colours given to the edges}
 \label{fig:cherry-configuration-bipartite-proper colouring}
\end{subfigure}
\caption{}
\label{fig:cherry-configuration-bipartite}
\end{figure}

\begin{lemma}
\label{lem:cherry-configuration-bipartite}
 Let $G$ be the bipartite graph shown in Figure~\ref{fig:cherry-configuration-bipartite-list-size} with lists of colours assigned to 
 the edges, where the minimum sizes of 
 the lists are indicated by the integers on the edges. Then there is an
 $L$-edge-colouring of $G$. 
\end{lemma}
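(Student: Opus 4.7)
The plan is to apply Theorem~\ref{thm:slivnik} directly, using the proper edge-colouring $\mathcal{C}$ already displayed in Figure~\ref{fig:cherry-configuration-bipartite-proper colouring}. Since $G$ is bipartite with biparts $\{u,v_1,v_2,v_3\}$ and $\{w_1,w_2,w_3\}$, and $\mathcal{C}$ uses only colours from $\{1,2,3,4\}$, it suffices to check that for every edge $w'v' \in E(G)$ one has
\[
|L(w'v')| \;\geq\; |\{wv'\in E(G): \mathcal{C}(wv') > \mathcal{C}(w'v')\}| + |\{w'v\in E(G): \mathcal{C}(w'v) < \mathcal{C}(w'v')\}| + 1,
\]
and then Theorem~\ref{thm:slivnik} supplies the desired $L$-edge-colouring.

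First I would verify that the labelling in Figure~\ref{fig:cherry-configuration-bipartite-proper colouring} is a proper edge-colouring, by inspecting each of the seven vertices: at every $v_i$ the two incident edges receive distinct colours, at every $w_j$ the three incident edges (one each from $u$, from some $v_i$ and from some $v_{i'}$) receive distinct colours, and at $u$ the three edges $uw_1,uw_2,uw_3$ are coloured $2,3,4$. Then, for each of the nine edges in turn, I would count the two quantities appearing in Slivnik's inequality and check that the sum plus one does not exceed the list-size label on that edge in Figure~\ref{fig:cherry-configuration-bipartite-list-size}.

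Let me illustrate the type of verification: the edge $uw_1$ carries colour $2$; at $u$ the edges with strictly larger colour are $uw_2$ (colour $3$) and $uw_3$ (colour $4$), contributing $2$, while at $w_1$ the edges with strictly smaller colour are $v_1w_1$ (colour $1$), contributing $1$. Thus the bound required is $2+1+1=4$, which matches the list size $|L(uw_1)|\geq 4$. The same mechanical check must be made for the remaining eight edges; each produces a bound that is exactly met by the list size indicated in Figure~\ref{fig:cherry-configuration-bipartite-list-size}.

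The argument contains no genuine obstacle: the only work is the routine bookkeeping for nine edges, and the delicate point was already resolved when the edge-colouring in Figure~\ref{fig:cherry-configuration-bipartite-proper colouring} was chosen so that at each edge the ``greater-at-$V$-end plus smaller-at-$W$-end'' count is minimised. In particular, the highest colours $3$ and $4$ are used on edges incident to the high-demand vertex $u$, so that the strict-inequality counts at $u$ stay small, while at the $w_j$'s the colour $1$ is placed on a pendant-like neighbour (one of $v_1,v_2,v_3$) to ensure each remaining edge at $w_j$ has at most one smaller neighbour. Once these checks are written out, Theorem~\ref{thm:slivnik} immediately yields the claimed $L$-edge-colouring of~$G$.
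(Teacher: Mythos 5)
Your proposal is correct and is exactly the paper's proof: apply Theorem~\ref{thm:slivnik} with the proper edge-colouring displayed in Figure~\ref{fig:cherry-configuration-bipartite-proper colouring}. Your sample verification for $uw_1$ is accurate, and the remaining eight edges check out the same way (each meeting Slivnik's bound with equality or better).
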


 \begin{proof}
  Use Theorem~\ref{thm:slivnik} with the edge-colouring in Figure~\ref{fig:cherry-configuration-bipartite-proper colouring}.
 \end{proof}
 We call an assignment of lists $L$ to the edges of a bipartite graph $G$ \emph{$V$-dominated} if 
 $|L({wv})| \geq \deg(v)$ for each $v \in V.$ We say that $G$ is \emph{$V$-choosable}, if every $V$-dominated assignment of lists  $L$
 permits an $L$-edge-colouring. The next lemma can also be found in \cite{journals/jct/BorodinKW97}.

 \begin{lemma}
 \label{lem:Y-respecting}
  Let $G$ be a bipartite graph with an assignment of lists $L$ to the edges of $G$ such that 
  $L(wv) = \{1, \ldots, \deg(v) \}$ for $w \in W$ and $v \in V.$ Then $G$ is $V$-choosable if and only if it has an $L$-edge-colouring.
 \end{lemma}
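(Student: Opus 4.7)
The forward direction is immediate: the list assignment $L$ with $L(wv) = \{1, \ldots, \deg(v)\}$ has $|L(wv)| = \deg(v)$ for every edge $wv$, so it is itself $V$-dominated; $V$-choosability of $G$ then delivers an $L$-edge-colouring.

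For the converse, suppose $G$ admits an $L$-edge-colouring $\mathcal{C}$, and let $L'$ be an arbitrary $V$-dominated list assignment. The plan is to verify the hypothesis of Theorem~\ref{thm:slivnik} for the pair $(L', \mathcal{C})$ and thereby extract an $L'$-edge-colouring. The first step is the observation that for every $v \in V$ the $\deg(v)$ edges incident to $v$ carry pairwise distinct colours drawn from the $\deg(v)$-element list $\{1, \ldots, \deg(v)\}$, so their colour set is exactly $\{1, 2, \ldots, \deg(v)\}$.

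Now fix an edge $w'v' \in E(G)$ and set $k := \mathcal{C}(w'v')$. The observation above forces $|\{wv' \in E(G) : \mathcal{C}(wv') > k\}| = \deg(v') - k$ exactly. Separately, since $\mathcal{C}$ is a proper edge-colouring, the edges at $w'$ with colour strictly less than $k$ use distinct colours from $\{1, \ldots, k-1\}$, giving the trivial bound $|\{w'v \in E(G) : \mathcal{C}(w'v) < k\}| \leq k - 1$. Summing these with the $+1$ in Slivnik's inequality telescopes to $(\deg(v') - k) + (k - 1) + 1 = \deg(v') \leq |L'(w'v')|$, where the last inequality is the $V$-dominated hypothesis. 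Theorem~\ref{thm:slivnik} then produces the desired $L'$-edge-colouring.

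I do not anticipate a real obstacle; the whole argument rests on recognising that the asymmetry in Slivnik's inequality is tailor-made for this setting, with the $V$-side count being \emph{exact} because all of $\{1, \ldots, \deg(v')\}$ necessarily appears at $v'$, while on the $W$-side the trivial distinctness bound already suffices.
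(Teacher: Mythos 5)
Your proof is correct. The paper itself states this lemma without proof (citing Borodin, Kostochka and Woodall), but your argument via Theorem~\ref{thm:slivnik} --- using that the colours at each $v'\in V$ are exactly $\{1,\ldots,\deg(v')\}$, so the two counts telescope to $\deg(v')\leq|L'(w'v')|$ --- is precisely the standard argument the paper's setup anticipates.
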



\begin{lemma}
 \label{lem:2-regular}
  Let $G$ be a bipartite graph in which $\deg(w) \leq 3$ for each $w \in W.$ Then $G$ is $V$-choosable if
  it has a 2-regular spanning subgraph~$H.$ 
 \end{lemma}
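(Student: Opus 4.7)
My plan is to apply Lemma~\ref{lem:Y-respecting} to reduce to the canonical list assignment $L(wv) = \{1, \dots, \deg(v)\}$, and then to construct an explicit proper edge-colouring by using the two smallest colours on the edges of $H$ and the colours $\geq 3$ on the remaining edges $F := E(G)\sm E(H)$.

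The structural observation that makes this feasible is that the degree condition $\deg_G(w) \leq 3$ forces $\deg_F(w) = \deg_G(w)-2 \leq 1$ for every $w \in W$, so $F$ is essentially a matching on the $W$-side (more precisely, a disjoint union of stars centred at $V$-vertices). Moreover, since $G$ is bipartite and $H$ is $2$-regular, $H$ decomposes into a disjoint union of even cycles.

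The colouring I would build then has two layers. First, at each $v \in V$ I assign the $\deg_G(v)-2$ edges of $F$ incident to $v$ the colours $3, 4, \ldots, \deg_G(v)$ via an arbitrary bijection; conflicts at $W$-vertices are ruled out by the observation above since each $w \in W$ is incident to at most one $F$-edge. Second, on each even cycle of $H$ I alternate the colours $1$ and $2$, which is always feasible and which properly edge-colours $H$. Both palettes lie inside $L(wv)$ for every corresponding edge, and the palettes $\{1,2\}$ and $\{3,4,\ldots\}$ are disjoint, so the combined colouring is automatically proper and $L$-compatible.

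I do not foresee a serious obstacle: the whole argument is a clean structural decomposition, which becomes available as soon as one notices that the hypothesis $\deg(w) \leq 3$ makes $F$ almost degenerate on the $W$-side. The only point to keep an eye on is that $\deg_G(v) \geq 2$ whenever $v$ is incident to an $H$-edge (true because $H$ itself is $2$-regular), which guarantees that the colours $\{1,2\}$ actually lie in the prescribed list $\{1,\ldots,\deg_G(v)\}$.
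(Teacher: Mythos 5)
Your proof is correct and follows essentially the same route as the paper: reduce to the canonical lists via Lemma~\ref{lem:Y-respecting}, colour $H$ with the colours $\{1,2\}$, and colour the remaining star forest centred in $V$ with the colours $\geq 3$. The only cosmetic difference is that you $2$-colour the even cycles of $H$ explicitly, where the paper invokes Theorem~\ref{thm:galvin}, and that you make the semi-greedy star-colouring step explicit.
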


\begin{proof}
 By Lemma~\ref{lem:Y-respecting} it suffices to show that $G$ has an $L$-edge-colouring from the lists $L(wv) = \{1, \ldots, \deg(v)\}$ for $w \in W$ and $v \in V.$
 By Theorem~\ref{thm:galvin} we can colour the edges of the subgraph $H$ with colours 1 and 2. As the remaining edges form stars with their centres in $V$,
 we can finish semi-greedily. 
\end{proof}

\begin{lemma}
 \label{lem:K(3,3)_Y-choosable} 
  Let $G$ be a bipartite graph with biparts $V$ and $W$, of size $|V| \leq |W| = 3$. Then we can
  find a subset $W' \subset W$ such that the graph $G[W' \cup N(W')]$ is $N(W')$-choosable.
\end{lemma}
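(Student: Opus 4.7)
My plan is to identify $W'$ by a short case analysis on the degree sequence of $G$, arranging matters so that $H:=G[W'\cup N(W')]$ is either a single edge (trivially choosable), equals $K_{2,2}$, or contains a Hamilton cycle $C_6$; in the latter two cases the automatic bound $\deg_H(w)\le |N(W')|\le |V|\le 3$ allows me to invoke Lemma~\ref{lem:2-regular}.

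First I may delete isolated vertices from $G$, so that every remaining vertex has degree at least~$1$; the degenerate case $|V|=0$ is then trivial. If some $w_0\in W$ has $\deg_G(w_0)=1$, I take $W':=\{w_0\}$: then $H$ is a single edge $w_0v$, the $N(W')$-dominated condition reads only $|L(w_0v)|\ge \deg_H(v)=1$, and the colouring is immediate. Thus I may assume every $w\in W$ has degree at least~$2$ in $G$, which already forces $|V|\ge 2$.

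If $|V|=2$, the condition $\deg_G(w)\ge 2$ forces every $w$ to be adjacent to both vertices of $V$, so any choice $W':=\{w_1,w_2\}$ yields $H=K_{2,2}$, whose 2-regular spanning subgraph is $C_4$. If $|V|=3$ and some $v_0\in V$ has $\deg_G(v_0)=1$, let $w_0$ be its unique neighbour and take $W':=W\setminus\{w_0\}$: the two vertices of $W'$ are not adjacent to $v_0$, so their neighbourhoods lie in the two-element set $V\setminus\{v_0\}$, and $\deg_G(w)\ge 2$ forces each of them to be adjacent to both elements of $V\setminus\{v_0\}$; again $H=K_{2,2}$. In both of these situations Lemma~\ref{lem:2-regular} delivers the required $N(W')$-choosability.

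The main case is $|V|=3$ together with $\delta(G)\ge 2$. Here I take $W':=W$, so $N(W')=V$ and $H=G$, and claim that $G$ contains a Hamilton cycle $C_6$; given this, Lemma~\ref{lem:2-regular} applies with the $C_6$ as a 2-regular spanning subgraph. Establishing this claim is the only non-routine step of the proof: I would verify it by writing $G=K_{3,3}\setminus F$ and noting that any missing edge $w_1v_1\in F$, combined with $\delta(G)\ge 2$, forces $\deg_G(w_1)=\deg_G(v_1)=2$ and thereby pins down four specific edges as present; the remaining freedom in $F$ is then so constrained by $\delta(G)\ge 2$ on the other vertices that a short enumeration of the possible completions (essentially the cases $|E(G)|\in\{6,7,8,9\}$, with the $6$-edge case giving $G=C_6$ outright) exhibits an explicit Hamilton cycle in each subcase. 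This small combinatorial enumeration is the only real obstacle; everything else is bookkeeping.
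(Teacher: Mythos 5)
Your proposal is correct and follows essentially the same route as the paper: peel off degree-one vertices of $W$ and $V$, then in the minimum-degree-2 case exhibit a 2-regular spanning subgraph and invoke Lemma~\ref{lem:2-regular}. The only cosmetic difference is that you package the final case by observing that the complement of $G$ in $K_{3,3}$ is a matching and that $K_{3,3}$ minus a matching is Hamiltonian (which is easily checked: relabel so the missing edges lie in $\{v_iw_i\}$ and take $v_1w_2v_3w_1v_2w_3v_1$), whereas the paper runs through the cases $|E(G)|\in\{6,7,8,9\}$ and deletes edges at the degree-3 vertices to reach 2-regularity.
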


\begin{proof}
 We can assume that each $w \in W$ has
 $\deg(w) \geq 2.$ Otherwise $\{w\}$ would be $N(\{w\})$-choosable. If there is a vertex $v \in V$ of degree $1,$ say $vw \in W,$ we set $W' = W \setminus \{w\}$ 
 and apply Theorem~\ref{thm:galvin}.
 So assume that $G$ has minimum degree 2. 
 If $V$ has less than 3 vertices any two vertices of $W$ will work by Theorem~\ref{thm:galvin}.
 If $G$ has 6 edges it is a cycle and we can apply Theorem~\ref{thm:galvin} again.
 If $G$ has 7 edges, there are exactly two vertices $w\in W$ and $v \in V$ of degree 3 and $wv \in E(G).$ As $G - wv$ is 
 2-regular,
 we are done by Lemma~\ref{lem:2-regular}. If $G$ has 8 edges, there are four vertices $w_1,$ $w_2 \in W$ and
 $v_1,$ $v_2 \in V$ of degree 3 and $w_1v_1,$ $w_2v_2 \in E(G).$ As $G -w_1v_1-w_2v_2$ is 2-regular,
 we are done by Lemma~\ref{lem:2-regular}.
 If $G$ has 9 edges it is isomorphic to $K_{3,3}$ and we can apply Theorem~\ref{thm:galvin}.
\end{proof}

For the proof of the next lemma we define a \emph{$k$-vertex} to be a vertex of degree $k$ and a \emph{matching} to be a set of pairwise non-adjacent edges.

\begin{figure}
\centering
\tikzstyle{vertex}=[circle,draw,minimum size=14pt,inner sep=0pt]
\tikzstyle{edge} = [draw,-]
\tikzstyle{weight} = [font=\small]
 \begin{tikzpicture}[scale=1]
    \foreach \pos/\name in {{(0,0)/v_1}, {(1,0)/v_2}, {(2,0)/v_3},{(3,0)/v_4},
                            {(0,2)/w_1}, {(1,2)/w_2}, {(2,2)/w_3},{(3,2)/w_4}}
       \node[vertex, align=center] (\name) at \pos {$\name$};
    \foreach \source/ \dest /\weight in {v_1/w_1/2, v_1/w_2/2,
                                         v_2/w_1/2, v_2/w_3/2, 
                                         v_3/w_1/2, v_3/w_4/2,
                                         v_4/w_2/3, v_4/w_3/3, v_4/w_4/3}
       \path[edge] (\source) -- node[weight] {} (\dest);
 \end{tikzpicture}
\caption{}
\label{fig:K44_exception}
\end{figure}

\begin{lemma}
 \label{lem:K44-min-deg-2}
 Let $G$ be a bipartite graph of minimum degree 2 with biparts $V$ and $W$ such that $|V| = |W| = 4$ and $\deg(w) \leq 3$ for each $w\in W.$ Then $G$ is $V$-choosable, 
 except for the case where $G$ is isomorphic
 to the graph in Figure~\ref{fig:K44_exception}.
\end{lemma}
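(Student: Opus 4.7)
The plan is to combine Lemma~\ref{lem:Y-respecting} with Lemma~\ref{lem:2-regular}: since $\deg(w) \leq 3$ for every $w \in W$, exhibiting a $2$-regular spanning subgraph $H$ of $G$ will force $G$ to be $V$-choosable. So it suffices to prove the purely structural statement that every $G$ satisfying the hypotheses, other than the graph of Figure~\ref{fig:K44_exception}, contains a $2$-factor.

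I would proceed by case analysis on $m := |E(G)|$. The bounds $\deg(\cdot) \geq 2$ and $\deg(w) \leq 3$ for $w \in W$ force $m \in \{8,9,10,11,12\}$ and leave only a handful of possible degree sequences on each side. For $m=8$ the graph $G$ is itself $2$-regular. For $m=9$ both sides have degree sequence $(3,2,2,2)$; if the two degree-$3$ vertices are adjacent, removing the joining edge yields a $2$-factor, while otherwise the degree-$3$ vertex of $W$ is adjacent precisely to the three degree-$2$ vertices of $V$, and a short matching argument forces $G$ to be isomorphic to the graph of Figure~\ref{fig:K44_exception}. The cases $m=10,11,12$ are handled analogously: one tabulates the possible $V$-degree sequences (restricted by $|W|=4$) and in each subcase locates a set $F \subseteq E(G)$ of size $m-8$ with $\deg_F(v) = \deg(v)-2$ for $v \in V$ and $\deg_F(w) = \deg(w)-2$ for $w \in W$, whose removal yields the required $2$-factor. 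The existence of $F$ reduces to a small bipartite matching or $b$-matching problem, whose solvability follows from Hall's theorem via the degree bounds.

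The main obstacle is simply the volume of the case work and the care needed to confirm that Figure~\ref{fig:K44_exception} is the \emph{unique} obstruction. A more conceptual alternative is the Ore-type criterion for $2$-factors in bipartite graphs: $G$ has a $2$-factor if and only if
\[
\sum_{w \in W}\min\bigl(|N(w)\cap S|,\,2\bigr) \;\geq\; 2|S| \quad \text{for every } S \subseteq V.
\]
The hypotheses make this inequality immediate for $|S| \in \{0,1,2,4\}$; for $|S|=3$ it can fail only when the excluded vertex $v^* \in V$ has every neighbour of $W$-degree exactly $2$, and tracking the remaining edges then pins $G$ down uniquely to the exceptional graph of Figure~\ref{fig:K44_exception}. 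This approach replaces the bulk of the case analysis by a short counting argument.
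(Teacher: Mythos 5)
Your primary argument is essentially the paper's own proof: both reduce the lemma to exhibiting a $2$-regular spanning subgraph and then invoke Lemma~\ref{lem:2-regular} (via Lemma~\ref{lem:Y-respecting}), and your case split on $m=|E(G)|=8+k$ is the same as the paper's split on the number $k$ of $3$-vertices in $W$; in particular your $m=9$ subcase identifies Figure~\ref{fig:K44_exception} exactly as the paper does, and your $m\geq 10$ cases correspond to the paper's explicit edge-deletion constructions. Your alternative route via the deficiency condition $\sum_{w\in W}\min(|N(w)\cap S|,2)\geq 2|S|$ is a genuinely different and cleaner way to do the structural part: the inequality is automatic for $|S|\neq 3$, and for $|S|=3$ a short count (using $m=\sum_{v\in S}\deg(v)+\deg(v^*)$, $\deg(w)\geq 2$, and that a deficient $w$ must satisfy $N(w)=S$) forces $m=9$ and pins down the exceptional graph uniquely; this trades the case analysis for a counting argument, at the cost of importing a bipartite $2$-factor criterion the paper does not use.
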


\begin{proof} 
  We will show that $G$ has a 2-regular spanning subgraph, from which the result follows immediately by 
  Lemma~\ref{lem:2-regular}. 
  Write $W = \{w_1,w_2,w_3,w_4\}$ and $V = \{v_1,v_2,v_3,v_4\}.$ If there are no 3-vertices in $W$, we are done. Say the 3-vertices of $W$ are $w_1, \ldots, w_k$. We first
  assume that $k=1$ and the other 3-vertex is $v_4.$ If $w_1$ is
  adjacent to $v_4$ then $G-w_1v_4$ is a 2-regular spanning subgraph; otherwise, $N(w_1) = \{ v_1, v_2, v_3\},$ $N(v_4) = \{ w_2, w_3, w_4\},$ and the remaining
  three edges form a matching between $N(w_1)$ and $N(v_4),$ which results in the exceptional graph in Figure~\ref{fig:K44_exception}.
  
  Now assume that $k=2.$ If there is a 4-vertex $v_1 \in V$ then $G - w_1v_1 -w_2v_1$ is a 2-regular spanning subgraph; otherwise, we may assume that
  $v_1$ and $v_2$ are 3-vertices, and there is a matching $M$ of two edges between $\{ v_1,v_2\}$ and $\{ w_1,w_2\}$, since each of these vertices is 
  adjacent to at least one vertex in the other set; then $G-M$ is a 2-regular spanning subgraph.
  
  If $k = 3,$ let $v_i$ be a vertex of degree at least 3 that is adjacent to $w_3,$ and note that $G-w_3v_i$ satisfies the hypothesis of the lemma. 
  As in $G-w_3v_i$, $W$ has only two 3-vertices, we can find a 2-regular spanning subgraph as in the above lines. The case of $k = 4$ follows the same way.
\end{proof}

\begin{lemma}
 \label{lem:K(4,4)_almost_Y-choosable}
 Let $G$ be a bipartite graph with biparts  $V$ and $W$ such that $|V| \leq |W| = 4$ and $\deg(w) \leq 3$ for each $w\in W.$ Then we can
  find a subset $W' \subset W$ such that the graph $G[W' \cup N(W')]$ is $N(W')$-choosable, except for the case 
  where $G$ is isomorphic
 to the graph in Figure~\ref{fig:K44_exception}.
\end{lemma}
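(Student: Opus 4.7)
The plan is to reduce to Lemmas~\ref{lem:K(3,3)_Y-choosable} and~\ref{lem:K44-min-deg-2} via a case analysis on $|V|$ and on the minimum degree of $G$.

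First suppose $|V| \leq 3$. I will pick any $w_0 \in W$ and set $\widetilde W := W \sm \{w_0\}$. Then $G[\widetilde W \cup N(\widetilde W)]$ has bipartition classes $\widetilde W$ of size $3$ and $N(\widetilde W) \subseteq V$ of size at most $3$, so Lemma~\ref{lem:K(3,3)_Y-choosable} applied to this induced subgraph yields a subset $W' \subseteq \widetilde W \subseteq W$ for which $G[W' \cup N(W')]$ is $N(W')$-choosable. Note that the neighbourhood of $W'$ computed in the smaller graph agrees with $N_G(W')$, since neighbours of $W'$ already lie in $V$, so this really does deliver the required $W'$.

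From now on assume $|V| = 4$. If some vertex $w \in W$ has $\deg(w) \leq 1$, I will set $W' = \{w\}$: the induced subgraph $G[W' \cup N(W')]$ has at most one edge, which any $N(W')$-dominated list assignment can colour, since the relevant list has size at least~$1$. Otherwise every $w \in W$ has degree at least $2$. If in addition some $v \in V$ has $\deg(v) \leq 1$, I will let $w_0$ be the unique neighbour of $v$ in case $\deg(v) = 1$, and any vertex of $W$ otherwise, and set $\widetilde W := W \sm \{w_0\}$. Then $|\widetilde W|=3$ and $v \notin N(\widetilde W)$, so $|N(\widetilde W)| \leq 3$, and Lemma~\ref{lem:K(3,3)_Y-choosable} again produces the desired $W' \subseteq \widetilde W$.

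The only remaining case is $|V| = 4$ together with every vertex of $G$ having degree at least $2$. Here Lemma~\ref{lem:K44-min-deg-2} directly asserts that $G$ itself is $V$-choosable (so that $W' = W$ works), unless $G$ is isomorphic to the graph of Figure~\ref{fig:K44_exception}, which is precisely the stated exception. The heavy lifting is done by the two preceding lemmas, so I do not anticipate a real obstacle beyond the routine degree bookkeeping; the one point worth attention is that whenever a low-degree vertex exists (in $V$ or in $W$), one can use it either to shrink $W$ to $3$ vertices without enlarging $N(\widetilde W)$, or to take $W'$ to be just that single low-degree vertex.
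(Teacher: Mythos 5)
Your proposal is correct and follows essentially the same route as the paper: dispose of $|V|\leq 3$ and of low-degree vertices (in $W$ via a singleton $W'$, in $V$ by deleting the offending neighbour and invoking Lemma~\ref{lem:K(3,3)_Y-choosable}), then hand the remaining minimum-degree-2 case to Lemma~\ref{lem:K44-min-deg-2}. Your write-up is in fact a little more explicit than the paper's (e.g.\ the degree-0 cases and the remark that neighbourhoods computed in the induced subgraph agree with those in $G$), but the argument is the same.
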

\begin{proof}
  If $V$ has a size of less than 4, Lemma~\ref{lem:K(3,3)_Y-choosable} applies and we are done. So $|V| = 4$ and 
 we may assume that for each proper subset $W' \subset W$ we have ${|W'| < |N(W')|}$
 as we could apply Lemma~\ref{lem:K(3,3)_Y-choosable} otherwise. As before there is no vertex $w \in W$ of degree 1.
 If there is a vertex
 $v \in V$ of degree 1, say $wv \in E(G),$ the result holds with $W' = W \setminus \{w\}.$
 This yields
 that $G$ has minimum degree at least 2. By Lemma~\ref{lem:K44-min-deg-2} $G$ is $V$-choosable or isomorphic to the graph in Figure~\ref{fig:K44_exception}.
\end{proof}

\begin{lemma} 
\label{lem:k44-min-deg-3}
 Let $G$ be a bipartite graph with biparts $V$ and $W$ such that $|V| \leq |W| = 4$ and $\deg(w) \geq 3$ for each $w\in W.$ Further let
 there be one vertex ${u} \in V$ to which each vertex of $W$ is connected.
 Then we can
 find a subset $W' \subset W$ such that the graph $G[W' \cup N(W')]$ is $N(W')$-choosable.                                                                                              
\end{lemma}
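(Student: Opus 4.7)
I would split the proof into two cases depending on $|V|$.

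If $|V|=3$, the hypothesis $\deg(w) \geq 3$ forces each $w \in W$ to be adjacent to all three vertices of $V$, so $G \cong K_{3,4}$. Choosing any $W' \subset W$ with $|W'|=3$ yields $G[W' \cup V] \cong K_{3,3}$, in which every $v \in V$ has degree $3$. Galvin's theorem (Theorem~\ref{thm:galvin}) implies that $K_{3,3}$ is $3$-edge-choosable, and in particular $V$-choosable.

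If $|V|=4$, write $V_0 := V \setminus \{u\}$ and $G_0 := G - u$. Then $G_0$ is bipartite with $|V_0|=3$, $|W|=4$, and $\deg_{G_0}(w) = \deg_G(w) - 1 \in \{2,3\}$ for each $w \in W$. The hypotheses of Lemma~\ref{lem:K(4,4)_almost_Y-choosable} hold for $G_0$, and the exceptional graph of Figure~\ref{fig:K44_exception} (which requires $|V|=4$) is excluded; the lemma thus yields a set $W' \subset W$ such that $H_0 := G_0[W' \cup N_{G_0}(W')]$ is $N_{G_0}(W')$-choosable. I claim this same $W'$ witnesses the conclusion for $G$.

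To verify the claim, set $H := G[W' \cup N_G(W')] = H_0 \cup \{uw : w \in W'\}$, so that $\deg_H(u) = |W'|$ and $\deg_H(v) = \deg_{H_0}(v)$ for $v \in V_0 \cap N(W')$. By Lemma~\ref{lem:Y-respecting}, proving that $H$ is $N_G(W')$-choosable reduces to exhibiting a proper edge-colouring $c$ of $H$ with $c(wv) \in \{1, \ldots, \deg_H(v)\}$ for each edge. Applying Lemma~\ref{lem:Y-respecting} to $H_0$ and the $N_{G_0}(W')$-choosability provided above, there is a proper colouring $c_0$ of $H_0$ with $c_0(wv) \in \{1, \ldots, \deg_{H_0}(v)\}$. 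It then remains to select a bijection $\phi_u : W' \to \{1, \ldots, |W'|\}$ and define $c(uw) := \phi_u(w)$, in such a way that $\phi_u(w)$ avoids the forbidden set $\{c_0(wv) : v \in N_{H_0}(w)\}$ at every $w \in W'$.

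The main obstacle is producing the bijection $\phi_u$. This is a perfect matching problem in the bipartite graph joining each $w \in W'$ to those colours in $\{1, \ldots, |W'|\}$ not forbidden at $w$; each forbidden set has size $\deg_{H_0}(w) = \deg_G(w) - 1 \leq 3$, so Hall's condition holds immediately when $|W'|$ is sufficiently large. In the tight cases--for instance when several $w \in W'$ with $\deg_G(w)=4$ have coinciding forbidden sets in $\{1, \ldots, |W'|\}$--one exploits the freedom in the choice of $c_0$ (guaranteed by the $N_{G_0}(W')$-choosability of $H_0$) to separate those forbidden sets, or recognises the degenerate configuration $G \cong K_{4,4}$, in which $W' = W$ already works because $K_{4,4}$ is $4$-edge-choosable by Galvin's theorem and every $v \in V$ has degree $4$.
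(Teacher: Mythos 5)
Your $|V|=3$ case is fine, but the reduction in the $|V|=4$ case has a genuine gap: the set $W'$ supplied by Lemma~\ref{lem:K(4,4)_almost_Y-choosable} for $G_0=G-u$ need not witness the conclusion for $G$, and the extension step cannot in general be repaired. The obstruction is that a vertex $w\in W'$ may have $\deg_G(w)=4$ while every vertex of $N_G(W')$ has degree only $3$ in $H=G[W'\cup N_G(W')]$; then the $N_G(W')$-dominated assignment giving every edge the list $\{1,2,3\}$ is admissible but cannot properly colour the four edges at $w$. Concretely, let $G$ be $K_{4,4}$ minus one edge $v_1w_1$ with $v_1\neq u$; all hypotheses of the lemma hold. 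Applied to $G_0$, Lemma~\ref{lem:K(4,4)_almost_Y-choosable} may legitimately return $W'=\{w_2,w_3,w_4\}$, since $G_0[W'\cup V_0]\cong K_{3,3}$ is $V_0$-choosable by Galvin's theorem; your argument uses only this guaranteed property, so it must succeed for this $W'$. Yet $G[W'\cup N_G(W')]\cong K_{3,4}$ with every vertex of $N_G(W')=V$ of degree $3$ and every $w\in W'$ of degree $4$, which is not $N_G(W')$-choosable. This also defeats the fallback of ``exploiting the freedom in $c_0$'': here $\deg_{H_0}(w)=3$ and all colours of $c_0$ lie in $\{1,2,3\}$, so the forbidden set at each $w$ is forced to be all of $\{1,2,3\}$ whatever $c_0$ is, and no bijection $\phi_u$ into $\{1,\ldots,|W'|\}=\{1,2,3\}$ can exist. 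The degenerate configurations are thus not limited to $K_{4,4}$.

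The paper's proof takes a different route precisely to avoid this. After disposing of the case where some proper subset $W'\subset W$ has $|N(W')|\le|W'|$ (handled via Lemma~\ref{lem:K(3,3)_Y-choosable}), it keeps $W'=W$, counts edges to show that $V$ contains at least as many $4$-vertices as $W$ does, extracts a matching $M$ from the $4$-vertices of $V$ onto the highest-degree vertices of $W$ (possible because a $4$-vertex of $V$ is joined to all of $W$), colours $M$ with the colour $4$ available in the lists at those $V$-vertices, and only then reduces to the maximum-$W$-degree-$3$ situation of Lemma~\ref{lem:K44-min-deg-2} on $G-M$. Some such mechanism for spending the ``extra'' fourth colour at the degree-$4$ vertices of $W$ is exactly what your argument is missing.
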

\begin{proof}
  If $V$ has a size of less than 4, Lemma~\ref{lem:K(3,3)_Y-choosable} applies and we are done. So $|V| = 4$ and  as before
 we may assume that for each proper subset $W' \subset W$ we have $|W'| < |N(W')|$
 as we could apply Lemma~\ref{lem:K(3,3)_Y-choosable} otherwise. This implies that $G$ has minimum degree 2 as seen in the proof of Lemma~\ref{lem:K(3,3)_Y-choosable}.
 Denote by $k$ the number 4-vertices in $W.$ We have $|E(G)| = 4k+3(4-k) =12+k$ and therefore
 there are $8+k$ edges incident to the vertices of $V \setminus \{{u}\}.$ Thus there are at least as many 4-vertices in $V$ as there are in $W.$ Let $L$ be an
 assignment of lists to the edges $vw \in E(G)$ such that $L(vw) = \{1, \ldots, \deg(v)\}.$ We will show that $G$ has an $L$-edge-colouring and is hence 
 $V$-choosable by Lemma~\ref{lem:Y-respecting}. Denote by $X$ the 4-vertices of $V$. 
 Let $Y$ be the set of $|X|$ vertices with of largest degree in $W$.
 Note that each 4-vertex
 of $W$ is included in $Y.$ As every vertex of $X$
 is connected to every vertex of $Y,$ there is a matching $M$ of size $|X|$ between $X$ and $Y.$ Colour
 the edges of $M$ with colour 4. The graph $G  -M$ retains a minimum degree of at least~2 and has a maximum
 degree of at most 3. Thus we can use Lemma~\ref{lem:K44-min-deg-2} colour the edges of $G-M$ from the lists of remaining colours.
\end{proof}

\begin{figure}
\centering
\tikzstyle{vertex}=[circle,draw,minimum size=14pt,inner sep=0pt]
\tikzstyle{edge} = [draw,-]
\tikzstyle{weight} = [font=\small]
\begin{subfigure}[b]{0.3\textwidth}\centering
 \begin{tikzpicture}[scale=1]
    \foreach \pos/\name in {{(0,0)/v_1}, {(1,0)/v_2}, {(2,0)/v_3},{(3,0)/v_4},
                            {(0,2)/w_1}, {(1,2)/w_2}, {(2,2)/w_3},{(3,2)/w_4}}
       \node[vertex, align=center] (\name) at \pos {$\name$};
    \foreach \source/ \dest /\weight in {v_1/w_1/2, v_1/w_2/2, w_1/v_4/4,
                                         v_2/w_1/2, v_2/w_3/2, 
                                         v_3/w_1/2, v_3/w_4/2,
                                         v_4/w_2/43, v_4/w_3/4,v_4/w_4/4}
       \path[edge] (\source) -- node[weight] {} (\dest);
 \end{tikzpicture}
 \caption{}
 \label{fig:pw4-ins-1}
\end{subfigure}
\begin{subfigure}[b]{0.3\textwidth}\centering
 \begin{tikzpicture}[scale=1]
    \foreach \pos/\name in {{(0,0)/q_1}, {(1,0)/q_2}, {(2,0)/q_3},{(3,0)/q_4},
                            {(0,2)/p_1}, {(1,2)/p_2}}
       \node[vertex, align=center] (\name) at \pos {$\name$};
    \foreach \source/ \dest /\weight in {q_1/p_1/2, q_1/p_2/2,
                                         q_2/p_1/2, q_2/p_2/2, 
                                         q_3/p_1/2, q_3/p_2/2,
                                         q_4/p_1/2, q_4/p_2/2}
       \path[edge] (\source) -- node[weight] {} (\dest);
 \end{tikzpicture}
 \caption{}
 \label{fig:pw4-ins-2}
\end{subfigure}

\caption{}
\label{fig:pw4-ins}
\end{figure}

\begin{lemma}
 \label{lem:pw4-ins}
 Let $G$ be the graph shown in Figure~\ref{fig:pw4-ins-1} with an assignment of lists $L$ to the edges of $G$ such that the size of
 each list $L({w_iv_j}),$ is at least $\deg(v_i)$ for $1\leq i,j \leq 4.$ 
 There is an $L$-edge-colouring of $G,$ if there is an $L^1$-colouring $\mathcal{C}$ of the
 complete bipartite graph $K_{2,4}$ 
 with bipartition
 classes ${\{p_1, p_2\}}$ and $\{q_1, q_2, q_3, q_4 \},$ from the assignment of lists $L^1,$ 
 where $L^1({p_iq_j}) = L({v_jw_1})$ for $1 \leq i \leq 2$
 and $1 \leq j \leq 4$ (see Figure~\ref{fig:pw4-ins-2}).
\end{lemma}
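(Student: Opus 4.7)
The plan is to use the given $L^1$-edge-colouring $\mathcal{C}$ of $K_{2,4}$ as a blueprint. Set $a_j := \mathcal{C}(p_1 q_j)$ and $b_j := \mathcal{C}(p_2 q_j)$ for $j = 1, 2, 3, 4$. Since $\mathcal{C}$ is proper, both $(a_j)_{j=1}^{4}$ and $(b_j)_{j=1}^{4}$ are systems of distinct representatives (SDRs) of the family $\bigl(L(v_j w_1)\bigr)_{j=1}^{4}$, and $a_j \neq b_j$ for every $j$. Since $|L(v_j w_1)| = 2$ for $j \in \{1, 2, 3\}$, in those cases one has $\{a_j, b_j\} = L(v_j w_1)$.

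I first colour the star at $w_1$ by $w_1 v_j := a_j$; this is proper at $w_1$ and respects the lists. The task reduces to colouring the six remaining edges: the three edges $v_4 w_i$ incident to $v_4$ and the three pendant edges $e_i := w_i v_{i-1}$ for $i \in \{2, 3, 4\}$. They form a ``broom'' consisting of three internally disjoint length-$2$ paths $v_{i-1} - w_i - v_4$ meeting at $v_4$, with residual list sizes $|L(v_4 w_i) \setminus \{a_4\}| \geq 3$ at the edges incident to $v_4$ and $|L(e_i) \setminus \{a_{i-1}\}| \geq 1$ at the pendant edges.

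To extend, I pick pairwise distinct colours $c_i \in L(v_4 w_i) \setminus \{a_4\}$ for $i \in \{2, 3, 4\}$, subject to the further restriction $c_i \neq \gamma_i$ whenever $L(e_i) \setminus \{a_{i-1}\}$ is the singleton $\{\gamma_i\}$ (so that the pendant $e_i$ will retain an available colour). Each constrained set from which $c_i$ is chosen has size at least $2$, so Hall's theorem supplies the required SDR in every generic configuration. Once the $c_i$'s are fixed, each pendant edge is coloured from its nonempty residual list, completing the $L$-edge-colouring of $G$.

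The main obstacle is the single degenerate configuration in which Hall's condition fails here --- that is, when all three restricted sets collapse to a common $2$-element set. To handle it, I fall back on the alternative SDR $(b_j)$: re-colour $w_1 v_j := b_j$ for all $j$, which is still proper at $w_1$ and again respects the lists. This substitution changes the forced element $\gamma_i$ at each pendant $e_i$, because $\{a_j, b_j\} = L(v_j w_1)$ for $j \leq 3$ exhausts the list on the $v$-side. A short case analysis then verifies that the selection problem for $(a_j)$ and the selection problem for $(b_j)$ have complementary obstruction patterns, so at least one of the two SDRs yields a valid choice of $c_i$'s, completing the $L$-edge-colouring in all cases.
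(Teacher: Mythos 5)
Your proposal is correct and follows essentially the same strategy as the paper's proof: colour the star at $w_1$ with one row of $\mathcal{C}$, observe that the only obstruction to completing the colouring is a rigid configuration in which each $L(v_4w_i)$ consists of a fixed pair of colours together with $a_4$ and the pendant's forced colour, and escape that configuration by switching to the other row. The paper pre-colours the pendant edges with the second row and invokes Lemma~\ref{lem:trident} on the three edges at $v_4$, whereas you defer the pendants and phrase the same bottleneck as a Hall condition with an avoidance constraint; this is the same argument in slightly different clothing. The one step you assert rather than prove --- that the $a$-row and $b$-row selection problems cannot both be obstructed --- does check out: in your degenerate configuration, each $b$-row constrained set has size at least $2$ and, whenever the pendant $e_i$ is still restricted, contains both $a_4$ and $b_{i-1}$ (these survive the deletions because $b_4\neq a_4,b_{i-1}$ and $a_{i-1}\neq a_4,b_{i-1}$ by properness of $\mathcal{C}$), so the union of the three sets contains the three distinct colours $b_1,b_2,b_3$ and Hall's condition holds; if some pendant becomes unrestricted, the corresponding set already has size at least $3$. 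It would be better to include this short verification explicitly, since it is the only place where the distinctness of the $b_j$'s --- i.e.\ the full strength of the hypothesis on the auxiliary $K_{2,4}$ --- is actually used.
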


\begin{proof}
 If  $\mathcal{C}(p_1q_1) \notin L({\eb{1}{2}})$ colour the edges $\eb{i}{1}$ with 
 $\mathcal{C}(p_1q_i)$ for $1 \leq i \leq 4$ and finish semi-greedily. The case $\mathcal{C}(p_2q_1) \notin L({\eb{1}{2}})$
 can be handled the same way. So we can assume that $L({v_1w_1}) = L({v_{1}w_{2}})$ and by symmetry
 the same for $v_2$ and $v_3,$ hence:

 \begin{equation} \label{equ:K44_exception_1}
  L({v_iw_1}) = L({v_{i}w_{i+1}})
 \end{equation}
 for $1 \leq i \leq 3.$ 
 Now we colour the edges
\begin{itemize}
 \item $\eb{i}{1}$ with $\mathcal{C}(p_1q_i),$
 \item $\eb{4}{1}$ with $\mathcal{C}(p_1q_4)$ and
 \item $\eb{i}{i+1}$ with $\mathcal{C}(p_2q_i)$
\end{itemize}

 for $1 \leq i \leq 3.$ If we can not finish this edge-colouring, then by Lemma~\ref{lem:trident} there are colours $c_1$ and $c_2$ 
 such that

 \begin{equation}
     \label{equ:K44_exception_2}
     L({\eb{4}{i+1}}) = \{c_1,c_2,\mathcal{C}(p_1q_4), \mathcal{C}(p_2q_i)\}   
 \end{equation}
 for $1 \leq i \leq 3.$ By \eqref{equ:K44_exception_1} and \eqref{equ:K44_exception_2}, we can colour the edges as follows

\begin{itemize}
 \item $\eb{i}{1}$ with $\mathcal{C}(p_2q_i),$
 \item $\eb{4}{i+1}$ with $\mathcal{C}(p_2q_i),$
 \item $\eb{i}{i+1}$ with $\mathcal{C}(p_1q_i)$ and
 \item $\eb{4}{1}$ with $\mathcal{C}(p_2q_4)$
\end{itemize}

for $1 \leq i \leq 3$ to obtain an $L$-edge-colouring.
\end{proof}

Note that the instance shown in Figure~\ref{fig:K44_exception} can be solved in a very similar way.

\subsection{Other instances}
Now we will deal with some general instances that are related to the substructures that
 may appear within graphs of bounded tree-width, as it was shown in Section~\ref{sec:configurations}.
We start with a classic result that can be found in \cite{ErRuTa79}.
 
\begin{lemma}[Erd\H{o}s, Rubin and Taylor, 1979]
 \label{lem:cycle}
 Let $G$ be a cycle with an assignment of lists $L$ to the edges of $G,$ where the size of each list is at least 2. There is an $L$-edge-colouring of $G,$ 
 unless it is an odd cycle, all lists have size two and are identical.
\end{lemma}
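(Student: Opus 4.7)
The plan is to treat two cases according to whether the lists $L(e_i)$ of the edges $e_1,\ldots,e_n$ (taken in cyclic order) are all identical. In the first case I assume that two adjacent edges carry distinct lists; after a cyclic renumbering I may take these to be $e_1$ and $e_2$, and after possibly interchanging their roles I may further assume that $L(e_1)\setminus L(e_2)\neq\emptyset$. I then pick $c\in L(e_1)\setminus L(e_2)$, assign it to $e_1$, and colour the remaining edges in the order $e_n,e_{n-1},\ldots,e_3,e_2$. At each edge $e_k$ with $3\leq k\leq n$ only the cyclic successor $e_{k+1}$ (where $e_{n+1}:=e_1$) has already received a colour, so because $|L(e_k)|\geq 2$ a free colour remains. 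At the closing edge $e_2$ both neighbours $e_1$ and $e_3$ are coloured, but the colour $c$ on $e_1$ lies outside $L(e_2)$ by construction, leaving at most one effective forbidden colour; hence a valid choice still exists.

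In the second case I assume that all lists coincide with a common set $L$. If $|L|\geq 3$, I alternate two fixed elements of $L$ around the cycle and, if $n$ is odd, replace the colour of $e_n$ by a third element of $L$ to resolve the conflict with $e_1$. If $|L|=2$, an $L$-edge-colouring amounts to a proper $2$-colouring of $C_n$ in a prescribed pair of colours, which exists exactly when $n$ is even; the remaining situation, $n$ odd with all lists equal of size~$2$, is precisely the exceptional configuration excluded in the statement.

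The delicate point I expect is the ordering in the first case: the colour $c$ must be drawn from $L(e_1)\setminus L(e_2)$ rather than arbitrarily from $L(e_1)$, and the uncoloured path must be traversed away from $e_2$, so that the double constraint incurred at the loop-closing edge $e_2$ collapses to a single effective one. Once this is arranged, the rest is a routine greedy walk exploiting the uniform lower bound $|L(e_k)|\geq 2$.
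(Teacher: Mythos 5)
Your proof is correct, and it follows essentially the same route as the paper's: break the cycle at one edge whose chosen colour imposes no real constraint on one neighbour (either because adjacent lists differ or because a list has size at least $3$), then colour greedily along the resulting path, with the even alternation handling the remaining identical-size-two case. The only cosmetic difference is that you treat the ``all lists identical of size at least $3$'' situation as a separate explicit construction, while the paper folds it into the first branch via the condition $|L(f)\setminus\{c\}|\geq 2$.
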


\begin{lemma}
  \label{lem:trident}
  Let $G$ be the graph with vertices $V(G) = \{w_1, w_2, w_3, v\}$ and edges $E(G)= \{vw_1, vw_2, vw_3\}.$ 
   Let $L$ be an assignment of lists,
  where each list has a size of at least 2. Then there is an $L$-edge-colouring of $G$
 if either 
 \begin{itemize}
  \item one of the lists has size at least 3 or
  \item the lists are not pairwise identical.
 \end{itemize}
 \end{lemma}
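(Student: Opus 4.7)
The plan is to observe that, since the three edges $vw_1,vw_2,vw_3$ are pairwise adjacent at $v$, an $L$-edge-colouring of $G$ is exactly a system of distinct representatives (SDR) for the three lists $L(vw_1),L(vw_2),L(vw_3)$. So the lemma reduces to a purely set-theoretic claim: from three lists of size at least $2$, one can pick three pairwise distinct representatives whenever either some list has size $\geq 3$ or the three lists are not all identical.

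In the first case, if one list, say $L(vw_1)$, has size at least $3$, I would colour greedily in the order $vw_2,vw_3,vw_1$: first pick any $c_2\in L(vw_2)$, then any $c_3\in L(vw_3)\setminus\{c_2\}$ (available because $|L(vw_3)|\geq 2$), and finally any $c_1\in L(vw_1)\setminus\{c_2,c_3\}$ (available because $|L(vw_1)|\geq 3$). This yields the required colouring.

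In the second case, with all three lists of size exactly $2$ but not all equal, I would invoke Hall's marriage theorem. Hall's condition asks that $|\bigcup_{i\in S}L(vw_i)|\geq |S|$ for every $S\subseteq\{1,2,3\}$. For $|S|=1$ and $|S|=2$ this is automatic since each list already has size $2$. The only non-trivial instance is $S=\{1,2,3\}$, where one must check $|L(vw_1)\cup L(vw_2)\cup L(vw_3)|\geq 3$. If this union had size at most $2$, each of the three size-$2$ lists would have to equal the whole union, forcing all three to be identical, contradicting the hypothesis. Hence Hall's condition holds, an SDR exists, and choosing $c_i\in L(vw_i)$ accordingly produces the $L$-edge-colouring.

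The main (mild) obstacle is the second case, and the key observation is the very small one above: three distinct size-$2$ sets cannot all lie inside a universe of only two colours. Everything else is routine verification.
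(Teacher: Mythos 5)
Your proof is correct. It does, however, take a slightly different route from the paper's one-line argument, which observes that in either case one may (essentially) assume there is a colour $c \in L(vw_1)\setminus L(vw_2)$, colour $vw_1$ with $c$, and then finish semi-greedily (the edge with the smaller remaining list first); the point there is that colouring $vw_1$ with a colour absent from $L(vw_2)$ leaves $vw_2$ with two available colours and $vw_3$ with at least one. You instead reformulate the problem as finding a system of distinct representatives for the three lists --- which is exactly right, since the three edges are pairwise adjacent at $v$ --- handle the size-$\geq 3$ case by a direct greedy ordering, and settle the all-size-$2$ case by verifying Hall's condition, where the only non-trivial check is that three size-$2$ lists that are not all identical have union of size at least $3$. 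Invoking Hall's theorem is heavier machinery than needed for three sets, but it makes the case analysis cleaner and arguably more transparent than the paper's appeal to symmetry; in particular your argument cleanly covers the corner case where all three lists are identical of size at least $3$, which the paper's phrasing glosses over (there no colour lies in one list but not another, and one must fall back on the greedy argument anyway).
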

\begin{proof}
 In both cases we can assume that by symmetry there is a ${c \in L({vw_1}) \setminus L({vw_2})}.$ 
 Colour ${vw_1}$ with $c$ and finish semi-greedily.
\end{proof}
\begin{lemma}
\label{lem:ballon}
 Let $G$ be a cycle with edges $e_1,\ldots,e_n$ and an additional edge $f$ that is adjacent exactly to the vertex of $C$ that
 $e_1$ and $e_n$ share. Then there is an $L$-edge-colouring of $G$ for each
 assignment of lists $L,$ where the lists have a size of at least 2 and  $|L_{e_1}| \geq 3$.
\end{lemma}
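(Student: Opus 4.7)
The plan is to colour the edge $e_1$ first and then extend greedily along the remaining edges. Once $e_1$ is removed from $G$, the edges $e_2, e_3, \dots, e_n, f$ form a single path: in the cycle we lose the adjacency between $e_1$ and $e_2$, and the pendant $f$ stays attached only through $e_n$ at the common vertex of $e_1$, $e_n$ and $f$. Every list on this path has size at least $2$, so a left-to-right greedy colouring from $e_2$ through $e_n$ goes through without difficulty, since each edge along the way has only one previously coloured neighbour and a list of size $\geq 2$. The only real concern is the final edge $f$, which sees two already coloured neighbours in $G$, namely $e_n$ and $e_1$.

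To guarantee that $f$ always has an available colour, I would choose the colour of $e_1$, call it $c_1$, by a short case distinction. If $L(e_1) \setminus L(f) \neq \emptyset$, pick any $c_1 \in L(e_1) \setminus L(f)$; the restriction ``$f$ must avoid $c_1$'' is then vacuous because $c_1 \notin L(f)$, so $f$ only has to avoid the colour of $e_n$, and $|L(f) \setminus \{c(e_n)\}| \geq 2-1 = 1$. Otherwise $L(e_1) \subseteq L(f)$, and the hypothesis $|L(e_1)| \geq 3$ forces $|L(f)| \geq 3$; any choice $c_1 \in L(e_1)$ then works, since $|L(f) \setminus \{c_1, c(e_n)\}| \geq 3 - 2 = 1$.

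The remaining greedy step on the path $e_2, \ldots, e_n$ is routine: the first edge $e_2$ has the extra constraint of avoiding $c_1$, which still leaves $|L(e_2) \setminus \{c_1\}| \geq 1$, and every subsequent edge has exactly one previously coloured neighbour and a list of size at least $2$. I do not anticipate any real obstacle; the entire role of the hypothesis $|L(e_1)| \geq 3$ is precisely to buy the slack $|L(f)| \geq 3$ in the second case, enabling $f$ to absorb both adjacency constraints at once.
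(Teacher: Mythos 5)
There is a genuine gap in the greedy step. The edge $e_n$ is adjacent to $e_1$ in $G$: they share the very vertex at which $f$ is attached. So when your left-to-right greedy colouring reaches $e_n$, that edge has \emph{two} previously coloured neighbours, $e_{n-1}$ and $e_1$, not one, and a list of size $2$ need not survive both constraints. Concretely, take $n=3$ with $L(e_1)=\{1,2,3\}$, $L(e_2)=\{2,6\}$, $L(e_3)=\{1,2\}$ and $L(f)=\{4,5\}$. Your first case applies and permits $c_1=1$; the greedy step on $e_2$ may then choose the colour $2$, after which $e_3=e_n$ has no available colour (the instance itself is of course colourable, so it is the argument that fails, not the lemma). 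The choice of $c_1$ you make protects $f$ but not $e_n$, and it cannot in general be strengthened to avoid $L(f)\cup L(e_n)$ as well, since that union may have four elements while $|L(e_1)|$ is only guaranteed to be $3$.

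The paper's proof orders its case distinction precisely to dodge this. It first asks whether there is a colour $c\in L(e_n)\setminus L(f)$; if so it colours $e_n$ with $c$ \emph{first}, after which the uncoloured edges form a path (in the adjacency sense) $e_{n-1},\dots,e_2,e_1,f$ whose remaining list sizes are $1,2,\dots,2$, so a greedy pass starting at $e_{n-1}$ succeeds. Only when $L(e_n)\subseteq L(f)$ (hence, with lists truncated to their stated sizes, $L(f)=L(e_n)$) does it colour $e_1$ first, and then with a colour in $L(e_1)\setminus(L(f)\cup L(e_n))$, which exists because $|L(e_1)|\geq 3>|L(f)\cup L(e_n)|$ in that case; this single choice shields both $e_n$ and $f$ from the colour of $e_1$, and the greedy pass $e_2,\dots,e_n,f$ then goes through. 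Your argument needs an analogous device --- either colour $e_n$ early, or guarantee $c_1\notin L(e_n)$ in the cases where that is possible --- before the greedy step is sound.
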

\begin{proof}
 If there is a colour $c \in L({e_n}) \setminus L({f}),$ colour $e_n$ with $c$ and finish semi-greedily. This yields $L(f) = L({e_n})$ and
 hence there is a colour $c \in L({e_1}) \setminus (L(f) \cup L({e_n}))$. Colour $e_1$ with $c$  and finish semi-greedily.
\end{proof}

\begin{lemma}
\label{lem:eight}
 Let $G$ be a graph consisting of two cycles $V$ and $W$ with edges $E(V)= \{ g_1,\ldots, g_n \}$ and 
 $E(W)= \{ f_1, \ldots, f_m \}$ respectively, that share exactly one vertex $v \in g_1 \cap g_n \cap f_1 \cap f_m.$ 
 Then for any assignment of lists $L,$ where each list has a size of at least 2 and  
 and the size of $L({g_n})$
 and $L({f_m})$ is at least 4, there is an $L$-edge-colouring of $G.$
\end{lemma}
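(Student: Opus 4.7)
The plan is to colour the two large-list edges $g_n$ and $f_m$ first, and then to extend the colouring to $G - g_n - f_m$, which is a single path through $v$. Throughout, I assume, by shrinking lists, that $|L(g_n)| = |L(f_m)| = 4$ and that every other list has size exactly $2$.

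The central step is to choose distinct colours $c_n \in L(g_n)$ and $c_m \in L(f_m)$ such that
\begin{enumerate}[label=\emph{(\alph*)}]
  \item $L(g_1) \not\subseteq \{c_n, c_m\}$ and $L(f_1) \not\subseteq \{c_n, c_m\}$, so that $g_1$ and $f_1$ retain an available colour, and
  \item there exist distinct representatives $a \in L(g_1) \setminus \{c_n, c_m\}$ and $b \in L(f_1) \setminus \{c_n, c_m\}$, so that $g_1$ and $f_1$ can be coloured consistently (they are adjacent at $v$).
\end{enumerate}
Each condition rules out only a bounded number of ordered pairs (each forbidden configuration corresponds to forcing $\{c_n, c_m\}$ to equal one of the size-$2$ lists $L(g_1)$ or $L(f_1)$, or to forcing coincident singletons after reduction), while the total number of ordered pairs with $c_n \neq c_m$ is at least $4 \cdot 4 - 4 = 12$. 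A short pigeonhole argument, split according to whether $L(g_1)$ and $L(f_1)$ are disjoint, equal, or share a single element, yields a valid pair in every case.

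Having chosen such $c_n, c_m, a, b$, I colour $g_n := c_n$, $f_m := c_m$, $g_1 := a$ and $f_1 := b$. The edges still to be coloured form two disjoint paths $g_2, \ldots, g_{n-1}$ and $f_2, \ldots, f_{m-1}$; in each of them every interior edge retains a list of size $\geq 2$, and each of the two endpoints loses just one colour from its already-coloured neighbour, so its remaining list has size $\geq 1$. Each path is then coloured greedily from the end incident with the already-coloured edges.

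The main obstacle I anticipate is the boundary case $n = 3$ (and, symmetrically, $m = 3$), where the path $g_2, \ldots, g_{n-1}$ collapses to the single edge $g_2$, which is adjacent to both $g_1 = a$ and $g_n = c_n$; here one needs $L(g_2) \setminus \{a, c_n\} \neq \emptyset$. This is handled by strengthening the choice step with the requirement $c_n \notin L(g_2)$ (possible because $|L(g_n) \setminus L(g_2)| \geq 4 - 2 = 2$), and analogously $c_m \notin L(f_2)$ when $m = 3$. These extra constraints still leave enough ordered pairs $(c_n, c_m)$ by the same counting argument, so a valid choice of $(c_n, c_m)$ exists in every instance.
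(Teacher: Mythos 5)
There is a genuine gap in the final step. After you colour $g_n$, $f_m$, $g_1$ and $f_1$, the uncoloured edges of the cycle $V$ form the path $g_2,\ldots,g_{n-1}$, and \emph{both} of its end-edges are adjacent to already-coloured edges: $g_2$ to $g_1$ and $g_{n-1}$ to $g_n$. So you are trying to list-colour a path whose two extreme edges have only one remaining colour each and whose interior edges have two; "greedily from the end incident with the already-coloured edges" is not well defined (both ends are), and greedy from either end can die at the other. Concretely, take $n=4$ with $L(g_2)=\{a,x\}$ and $L(g_3)=\{c_n,x\}$: after your initial colouring both $g_2$ and $g_3$ are forced to the colour $x$, yet they are adjacent. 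Nothing in your conditions (a), (b), or the extra $n=3$ safeguard excludes this, and the same obstruction persists for every $n\geq 4$ (and symmetrically for the $f$-path); longer forcing chains such as remaining lists $\{x\},\{x,y\},\{y\}$ show it is not repaired by treating only a bounded boundary case. A secondary issue is that the "short pigeonhole argument" for choosing $(c_n,c_m,a,b)$ is only asserted, and with the added constraints it can itself fail in corner cases, e.g.\ $n=3$ with $L(g_n)\subseteq L(g_1)\cup L(g_2)$ and $L(g_1)=L(f_1)$.

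The paper avoids this trap by never leaving a path pinched between two precoloured edges. It first decouples the two cycles: if some $c\in L(g_1)\setminus L(f_1)$ exists, it colours $g_1$ with $c$, runs greedily along $g_2,\ldots,g_{n-1}$ (only one end constrained), and then the residual graph is the cycle $W$ plus the single pendant edge $g_n$ at $v$, which Lemma~\ref{lem:ballon} handles precisely because the large lists $L(g_n)$ and $L(f_m)$ are saved for last. The remaining case, where consecutive lists along each cycle coincide, is then dealt with directly. If you want to salvage your ordering, you would have to add, for each of the two paths, a condition guaranteeing that the path with both ends restricted is still colourable, and then redo the counting of admissible pairs $(c_n,c_m)$; as written, the proof does not go through.
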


\begin{proof}
       If there is a colour $c \in L(g_1) \setminus L(f_1),$ colour $g_1$ with $c,$ colour the edges ${g_2,~\ldots,~g_{m-1}}$ semi-greedily and 
       finish as shown in Lemma~\ref{lem:ballon}. 
       So we can assume that ${L(g_1) = L(f_1)}.$ Now suppose that there is a colour ${c \in L(g_i) \setminus L({g_{i+1}})}$ for some
       $1 \leq i \leq n-1.$ In that case we can colour $g_i$ with $c$ and finish semi-greedily. Therefore let $L(g_i) = L({g_{i+1}})$ for
       $1 \leq i \leq n-1$ and by symmetry $L({f_i}) = L({f_{i+1}})$ for  $1 \leq i \leq m-1.$ Bearing this in mind we can simply colour
       the graph semi-greedily.
\end{proof}

The next lemma will cover the instance related to the graph shown in Figure~\ref{fig:pw3md6-conf-2}. 

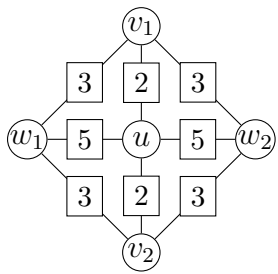
\begin{figure}
\centering
\tikzstyle{vertex}=[circle,draw,minimum size=14pt,inner sep=0pt]
\tikzstyle{edge} = [draw,-]
\tikzstyle{weight} = [font=\small,draw,fill           = white,
                                  text           = black]
 \begin{tikzpicture}[scale=1.5]
    \foreach \pos/\name in {{(1,3)/v_1}, {(1,2)/{u}}, {(2,2)/w_2},
                            {(0,2)/w_1},{(1,1)/v_2}}
       \node[vertex] (\name) at \pos {$\name$};
    \foreach \source/ \dest /\weight in {v_1/{u}/2, v_1/w_1/3,v_1/w_2/3,{u}/w_1/5
                                        , {u}/w_2/5, v_2/{u}/2  , v_2/w_1/3 , v_2/w_2/3 }
       \path[edge] (\source) -- node[weight] {$\weight$} (\dest);

    \foreach \vertex  in {}
        \path node[selected vertex] at (\vertex) {$\vertex$};

\end{tikzpicture} 
\caption{The integers indicate the minimum sizes of the respective lists.}
\label{fig:4-pyramid}
\end{figure}
\begin{lemma}
\label{lem:4-pyramid}
 Let $G$ be the graph shown in Figure~\ref{fig:4-pyramid} with lists of colours $L$ assigned to the edges, where the minimal sizes of 
the lists are indicated by the integers on the edges. Then there is an $L$-edge-colouring of $G.$ 
\end{lemma}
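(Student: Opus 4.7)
The plan is to colour the four edges of the outer cycle $C = v_1 w_1 v_2 w_2 v_1$ first, and then the remaining four edges incident to $u$. I begin by choosing $c_3 \in L(v_1 w_1) \setminus L(uv_1)$ and $c_6 \in L(v_2 w_2) \setminus L(uv_2)$; both difference sets are non-empty since $|L(v_i w_i)| \geq 3$ while $|L(uv_i)| \leq 2$. The edges $v_1 w_1$ and $v_2 w_2$ are non-adjacent, so colouring them with $c_3, c_6$ respectively is valid.

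Next I colour $v_1 w_2$ and $v_2 w_1$. Each has lost at most $\{c_3, c_6\}$ from adjacent coloured edges, so its residual list has size at least $1$; since $|L(v_i w_j)| \geq 3$, by Lemma~\ref{lem:cariolaro} (or directly, because the two edges are non-adjacent and their endpoint pairs have no uncoloured edge between them in the residual) I can pick colours $c_4, c_5$ satisfying additional constraints, in particular $c_4 \notin L(uv_1)$ and $c_5 \notin L(uv_2)$ whenever that is feasible. With $c_3 \notin L(uv_1)$ this leaves the residual list at $uv_1$ equal to $L(uv_1) \setminus \{c_4\}$, of size at least $1$, and similarly $|L^{\mathcal{C}}(uv_2)| \geq 1$. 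Colour $uv_1, uv_2$ with distinct colours from these residuals, and then colour $uw_1, uw_2$ from their residuals, which by this point have size at least $5 - 4 = 1$ each after losing the four adjacent colours at $w_j$ and at $u$.

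The main obstacle is that the residual lists at $uv_1, uv_2$ (or at $uw_1, uw_2$) might shrink to the same singleton, creating a conflict at $u$. Because the edges $uw_1, uw_2$ carry large initial lists (size at least $5$), there is slack to spend earlier to avert these clashes: whenever a forced-singleton conflict would arise in the semi-greedy finish, the choice of $c_3, c_4, c_5$ or $c_6$ can be refined to push a colliding colour out of the problematic list, using the $\geq 3 - 2 = 1$ extra capacity in each $L(v_i w_j)$ beyond $L(uv_i)$. A short case analysis, distinguishing according to whether $L(uv_1) = L(uv_2)$ and according to the intersections $L(v_i w_j) \cap L(uv_i)$, handles the few remaining tight configurations.
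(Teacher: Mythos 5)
Your outline defers the entire content of the lemma to the final sentence. Everything up to ``the main obstacle'' is routine counting, and the obstacle you then name --- the residual lists at $uv_1,uv_2$ (or at $uw_1,uw_2$) collapsing to the same singleton --- is exactly where the lemma is hard; asserting that ``a short case analysis handles the few remaining tight configurations'' is not a proof, and the configurations are neither few nor short. Concretely, take
$L(uv_1)=\{1,5\}$, $L(uv_2)=\{2,5\}$, $L(v_1w_1)=\{1,3,5\}$, $L(v_2w_2)=\{2,4,5\}$, $L(v_1w_2)=\{1,3,4\}$, $L(v_2w_1)=\{2,3,4\}$.
Your rule forces $c_3=3$ and $c_6=4$ (each difference set is a singleton), after which the residual lists of $v_1w_2$ and $v_2w_1$ are the forced singletons $\{1\}$ and $\{2\}$, so $c_4=1$, $c_5=2$, and both $uv_1$ and $uv_2$ are left with exactly the colour $5$: a dead end with no choices left to ``refine''. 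Any repair must abandon the constraint $c_3\notin L(uv_1)$ or $c_6\notin L(uv_2)$, and once you do that the repaired rim colouring can in turn force the two size-$5$ lists $L(uw_1),L(uw_2)$ down to a common singleton (note each of $uw_1,uw_2$ has \emph{five} neighbouring edges, so your count ``$5-4=1$'' only covers the first of them; the second needs a distinct representative). So the branching you would have to control involves all four rim choices plus the order and choices at the spokes, and you have not exhibited it.

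For comparison, the paper attacks the problem from the opposite end: it runs a sequence of reductions of the form ``if $L(e)\cap L(f)\neq\emptyset$ for this specific non-adjacent pair, colour both edges with a common colour and finish via Lemma~\ref{lem:ballon}, Lemma~\ref{lem:eight} or greedily'', thereby establishing the disjointness conditions \eqref{equ:4-pyramid-1}--\eqref{equ:4-pyramid-w_2}. Those conditions guarantee that after colouring the four spokes at $u$ semi-greedily, every rim edge still has two available colours, and the rim $4$-cycle is finished with Theorem~\ref{thm:galvin}. If you want to keep your rim-first order, you would need to prove an analogous set of structural conditions under which the four spokes always admit a system of distinct representatives; as it stands, the argument has a genuine gap at its only nontrivial step.
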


\begin{proof}

   If there is a colour $c \in \elgen{v_1}{{u}} \cap \elgen{v_2}{w_1},$ colour both edges with $c$ and finish semi-greedily. So we may assume
   \begin{equation}\label{equ:4-pyramid-1}
    \elgen{v_1}{{u}} \cap \elgen{v_2}{w_1}  = \emptyset.
   \end{equation}
   If there is a colour $c \in \elgen{v_2}{w_1} \cap \elgen{v_1}{w_2},$ colour both edges with $c,$ note that 
   $c \notin \elgen{v_1}{{u}}  \cup \elgen{{v_2}}{u}$
         by \eqref{equ:4-pyramid-1} (symmetry) and finish as in Lemma~\ref{lem:eight}. Therefore we have 
   \begin{equation}\label{equ:4-pyramid-2}
   \elgen{v_2}{w_1} \cap \elgen{v_1}{w_2} = \emptyset.
   \end{equation}
         If there is a colour $c \in \elgen{u}{{w_1}} \cap \elgen{v_1}{w_2},$ colour both edges with $c,$ note 
         that $c \notin \elgen{{v_2}}{u} \cup \elgen{v_2}{w_1}$
         by \eqref{equ:4-pyramid-1} and \eqref{equ:4-pyramid-2} (symmetry) and $c$ is on at most one of the lists of
         $\elgen{v_1}{w_1}$ and $\elgen{v_2}{w_2}$ by \eqref{equ:4-pyramid-2} (symmetry). In both cases we can we can 
         finish semi-greedily. So we have
   \begin{equation}\label{equ:4-pyramid-3}
   \elgen{u}{{w_1}} \cap \elgen{v_1}{w_2} = \emptyset
   \end{equation}
         If there is a colour $c \in \elgen{v_1}{w_2} \cap \elgen{v_1}{w_1},$ colour $\elgen{v_1}{w_2}$ with $c,$ note that 
         by \eqref{equ:4-pyramid-1}, \eqref{equ:4-pyramid-2} and \eqref{equ:4-pyramid-3} (symmetry) the only other edge that can have
         $c$ on its list is $\egen{v_1}{{u}}$ 
         and finish semi-greedily. Hence
      \begin{equation}\label{equ:4-pyramid-4}
   \elgen{v_1}{w_2} \cap \elgen{v_1}{w_1} = \emptyset.
   \end{equation}
         If there is a colour $c \in \elgen{v_1}{{u}} \cap \elgen{v_1}{w_2},$ colour $\elgen{v_1}{{u}}$ with $c,$ observe that by 
          \eqref{equ:4-pyramid-1}, \eqref{equ:4-pyramid-2}, \eqref{equ:4-pyramid-3} and \eqref{equ:4-pyramid-4} (symmetry) 
         the only edge that can have
         $c$ on its list is $\egen{{u}}{w_2}$ and finish semi-greedily. Consequently we have
   \begin{equation}\label{equ:4-pyramid-w_2}
   \elgen{{v_1}}{u} \cap \elgen{v_1}{w_2}= \emptyset.
   \end{equation}
  Colour the edges $\egen{v_1}{{u}},$ $\egen{{v_2}}{u},$ $\egen{{u}}{w_1}$ and $\egen{u}{{w_2}}$ semi-greedily. The remaining edges form a 4-cycle that retain at least
  two available colours by \eqref{equ:4-pyramid-w_2}. We finish by applying Theorem~\ref{thm:galvin}.
\end{proof}

 In the next two lemmas we will colour the instances related to the substructures shown in 
 Figure~\ref{fig:tw3md7-conf-2} and ~\ref{fig:tw3md7-conf-3}.

\begin{figure}
\centering
\tikzstyle{vertex}=[circle,draw, minimum size=14pt,inner sep=0pt]
\tikzstyle{edge} = [draw,-]
\tikzstyle{weight} = [font=\small,draw,fill           = white,
                                  text           = black]
\begin{subfigure}[b]{0.45\textwidth}\centering
 \begin{tikzpicture}[scale=1.1]
    \foreach \pos/\name in {{(1,1)/v_2}, {(-1,2)/v_1}, {(2,4)/v_3},
                            {(1,4)/{u}},{(-3,2)/w_1},{(2,1)/w_2},{(0,2)/w_3}}
       \node[vertex, align=center] (\name) at \pos {$\name$};
    \foreach \source/ \dest /\weight in {{u}/v_1/3, {u}/v_2/4, {u}/v_3/2, {u}/w_1/7, {u}/w_2/7, {u}/w_3/7,
                                          v_1/w_1/3, v_1/w_3/3, 
                                          v_2/w_1/4, v_2/w_2/4, v_2/w_3/4,
                                         v_3/w_2/2} 
       \path[edge] (\source) -- node[weight] {$\weight$} (\dest);
 \end{tikzpicture} \caption{}
\label{fig:cherry-configuration-one-max-deg-7}
\end{subfigure}
\begin{subfigure}[b]{0.45\textwidth}\centering
 \begin{tikzpicture}[scale=1.1]
    \foreach \pos/\name in {{(1,1)/{u}}, {(-1,-1)/v_1}, {(3,-1)/v_3},
                            {(1,3)/v_2},{(-0.2,1)/w_1},{(2.2,1)/w_2},{(1,-0.2)/w_3}}
       \node[vertex, align=center] (\name) at \pos {$\name$};
    \foreach \source/ \dest /\weight in {v_2/{u}/3, w_1/v_2/3, v_2/w_2/3, v_3/w_3/3,
                                          v_1/w_1/3, v_1/w_3/3, v_1/{u}/3, {u}/v_3/3,
                                          {u}/w_1/7, {u}/w_2/7, {u}/w_3/7,
                                         v_3/w_2/3} 
       \path[edge] (\source) -- node[weight] {$\weight$} (\dest);
 \end{tikzpicture} \caption{}
\label{fig:cherry-configuration-two-max-deg-7}
\end{subfigure}
\caption{The integers indicate the minimum sizes of the respective lists.}

\end{figure}
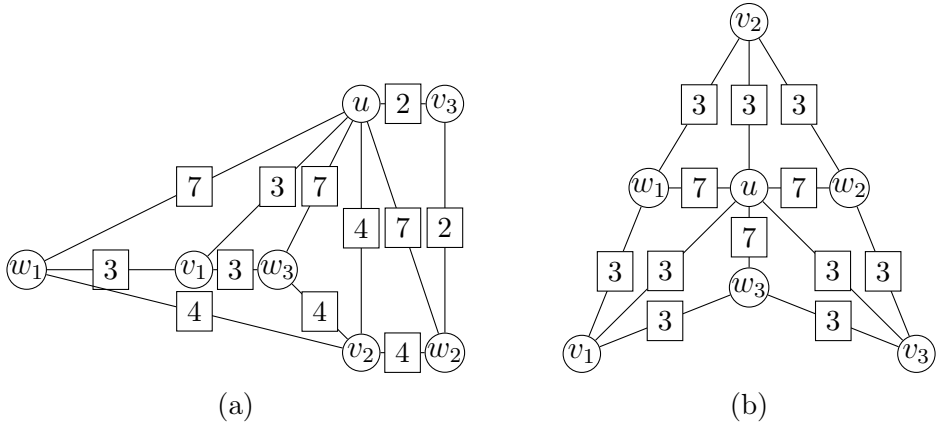

\begin{lemma}
\label{lem:cherry-configuration-one-max-deg-7}
 Let $G$ be the graph shown in Figure~\ref{fig:cherry-configuration-one-max-deg-7} with lists of colours assigned to 
 the edges, where the minimum sizes of the lists are indicated by the integers on the edges. Then there is an 
$L$-edge-colouring of $G$. 
\end{lemma}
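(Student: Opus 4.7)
The plan is to handle the three edges at $u$ that lie inside $V=\{u,v_1,v_2,v_3\}$, namely $uv_1$, $uv_2$ and $uv_3$, by a greedy pre-colouring, and then to apply Theorem~\ref{thm:slivnik} to the bipartite subgraph $B$ consisting of the remaining nine edges, with a carefully chosen proper edge colouring serving as the Slivnik ``priority'' function.

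First we would colour $uv_3$, $uv_2$ and $uv_1$ in that order.  Since they form a star at $u$, each successive edge loses at most one colour from its list per previously coloured neighbour, and a greedy step is always feasible: $|L(uv_3)|\geq 2$; $|L^{\mathcal{C}}(uv_2)|\geq 4-1=3$ after $uv_3$ is coloured; and $|L^{\mathcal{C}}(uv_1)|\geq 3-2=1$ after $uv_3$ and $uv_2$ are coloured. After this pre-colouring the remaining lists on the edges of $B$ satisfy $|L^{\mathcal{C}}(uw_i)|\geq 7-3=4$ for each $i\in\{1,2,3\}$, $|L^{\mathcal{C}}(v_1w_j)|\geq 3-1=2$ for $j\in\{1,3\}$, $|L^{\mathcal{C}}(v_2w_j)|\geq 4-1=3$ for $j\in\{1,2,3\}$, and $|L^{\mathcal{C}}(v_3w_2)|\geq 2-1=1$.

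Next we would introduce the proper edge colouring $\mathcal{D}$ of $B$ (with biparts $\{u,v_1,v_2,v_3\}$ and $\{w_1,w_2,w_3\}$) given by $\mathcal{D}(uw_1)=3$, $\mathcal{D}(uw_2)=2$, $\mathcal{D}(uw_3)=4$, $\mathcal{D}(v_1w_1)=1$, $\mathcal{D}(v_1w_3)=2$, $\mathcal{D}(v_2w_1)=2$, $\mathcal{D}(v_2w_2)=3$, $\mathcal{D}(v_2w_3)=1$, and $\mathcal{D}(v_3w_2)=1$, and verify by a direct calculation that for each edge $w'v'$ of $B$ the Slivnik cost $|\{wv'\in E(B):\mathcal{D}(wv')>\mathcal{D}(w'v')\}|+|\{w'v\in E(B):\mathcal{D}(w'v)<\mathcal{D}(w'v')\}|+1$ is bounded above by the corresponding reduced list size (for instance the cost at $v_2w_2$ is $0+2+1=3$, matching its list). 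Theorem~\ref{thm:slivnik} then yields an $L^{\mathcal{C}}$-edge-colouring of $B$, which combined with the pre-colouring of $uv_1,uv_2,uv_3$ produces the desired $L$-edge-colouring of $G$.

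The hard part will be choosing $\mathcal{D}$ so that the tight inequalities are all met simultaneously.  The edge $v_3w_2$, whose reduced list has size only $1$, forces $\mathcal{D}(v_3w_2)$ to be the minimum priority at $w_2$; the edges $v_1w_1$ and $v_1w_3$, with reduced list size $2$, leave essentially one admissible configuration (one of them is the minimum at its $w$-vertex, and the other is at most the second smallest).  Attempting this with only three priorities produces an unavoidable conflict at $u$, which is why $\mathcal{D}$ spends a fourth priority on $uw_3$; this is harmless because each $|L^{\mathcal{C}}(uw_i)|$ is still at least $4$.
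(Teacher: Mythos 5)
Your proof is correct, but it takes a genuinely different route from the paper. The paper first applies the counting argument of Lemma~\ref{lem:cariolaro} to the non-adjacent pair $v_3u$, $v_2w_2$ (whose only common neighbours are $v_2u$ and $v_3w_2$) to pick compatible colours, then colours $v_3w_2$ and $uw_2$ by hand and reduces the remaining eight edges to the ``pyramid'' instance of Lemma~\ref{lem:4-pyramid}, which carries its own five-step case analysis. You instead peel off the star $uv_3, uv_2, uv_1$ greedily (valid, since these three edges meet only at $u$ and their lists have sizes $2,4,3$) and hit the bipartite remainder with a single application of Theorem~\ref{thm:slivnik}; I checked your priority colouring $\mathcal{D}$ is proper and that all nine Slivnik inequalities hold against the reduced list sizes $4,4,4,2,2,3,3,3,1$ (the tight ones being $uw_1$, $uw_2$, $v_1w_1$, $v_1w_3$, $v_2w_1$, $v_2w_2$, $v_2w_3$ and $v_3w_2$). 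Your argument is shorter and more uniform --- it is exactly in the spirit of the paper's own Lemma~\ref{lem:cherry-configuration-bipartite} and avoids invoking Lemmas~\ref{lem:cariolaro} and~\ref{lem:4-pyramid} altogether; the paper's only real advantage is that Lemma~\ref{lem:4-pyramid} is needed elsewhere anyway (in Lemma~\ref{lem:path-width-3-delta-6}), so reusing it costs nothing globally.
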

\begin{proof}
 The same argument as in the proof of
  Lemma~\ref{lem:cariolaro} allows us to pick colours
  $c_1 \in \elgen{v_3}{u}$ and $c_2 \in \elgen{v_2}{w_2}$ such that either $c_1 = c_2$ or the lists 
  of ${}v_2u$ and $v_3w_2$ contain each at most one of the
  colours $c_1$ and $c_2.$ Colour $v_3u$ with $c_1$, $v_2w_2$ with $c_2$ and $v_3w_2$ semi-greedily. Colour the edge ${u}w_2$ with a colour 
  $c \in \elgen{{u}}{w_2} \setminus \elgen{{v_1}}{u}$ and
  apply Lemma~\ref{lem:4-pyramid} to the rest.
\end{proof}

\begin{lemma}
\label{lem:cherry-configuration-two-max-deg-7}
 Let $G$ be the graph shown in Figure~\ref{fig:cherry-configuration-two-max-deg-7} with lists of colours $L$ assigned to 
 the edges, where the minimum sizes of the lists are indicated by the integers on the edges. Then there is an 
 $L$-edge-colouring of $G$. 
\end{lemma}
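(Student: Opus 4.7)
The plan is to reduce the problem to Lemma~\ref{lem:cherry-configuration-bipartite}: the graph of Lemma~\ref{lem:cherry-configuration-two-max-deg-7} is obtained from the bipartite graph treated in Lemma~\ref{lem:cherry-configuration-bipartite} by adding the three extra edges $uv_1, uv_2, uv_3$, each carrying a list of size~$3$. First I would colour these three edges, which are pairwise adjacent at $u$; since each list has size at least~$3$, three distinct colours can be chosen by Lemma~\ref{lem:trident}. After this step every residual list $L^{\mathcal{C}}(uw_j)$ still has size at least $7-3=4$, and every $L^{\mathcal{C}}(v_iw_j)$ has size at least $3-1=2$, because $v_iw_j$ is adjacent to $uv_i$ only among the three newly coloured edges.

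To invoke Lemma~\ref{lem:cherry-configuration-bipartite} it additionally suffices that some edge $v_iw_j$ keeps its full list of size~$3$: by the dihedral symmetry of the bipartite subgraph (a $6$-cycle together with the hub~$u$), any such edge can be relabelled to play the role of $v_2w_1$, and the remaining residual sizes then satisfy the required bounds with slack. In the \emph{generic case} where $L(uv_i) \not\subseteq L(v_iw_j)$ for some $i$ and some edge $v_iw_j \in E(G)$, I would choose $c(uv_i) \in L(uv_i) \setminus L(v_iw_j)$ first and then complete the colouring of the remaining two edges $uv_{i'}$ with distinct colours avoiding $c(uv_i)$, each from its list of size~$3$; Lemma~\ref{lem:cherry-configuration-bipartite} then finishes.

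The hard part is the \emph{degenerate case}, in which $L(uv_i) = L(v_iw_{j_1}) = L(v_iw_{j_2}) =: T_i$ holds for every $v_i$, where $w_{j_1}, w_{j_2}$ are its two $W$-neighbours. Here the three edges incident to $v_i$ are forced to collectively use all three colours of~$T_i$, so the only remaining freedom is the choice of a distinct system of representatives $a_i \in T_i$ for $c(uv_i)$ and the orientation of the pair $T_i \setminus \{a_i\}$ on the two edges $v_iw_{j_1}, v_iw_{j_2}$. The distinctness condition at each $w_j$ yields a $2$-SAT instance on a triangle whose only unsatisfiable configuration is the one in which every pair $(v_i, v_{i'})$ satisfies $T_i \setminus \{a_i\} = T_{i'} \setminus \{a_{i'}\}$; a short case analysis on the intersection pattern of the $T_i$'s shows that the $a_i$'s can always be chosen to avoid this configuration. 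Once the $a_i$'s and orientations are fixed, each $uw_j$ is coloured from its list of size~$7$, avoiding the at most five colours forbidden in its neighbourhood. This degenerate case is the main technical obstacle, since it is where the rigidity imposed by the equal lists rules out the direct route through Lemma~\ref{lem:cherry-configuration-bipartite}.
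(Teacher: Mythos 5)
Your generic case is sound and coincides with the paper's first reduction: if some $c\in L(uv_i)\setminus L(v_iw_j)$ exists, colouring $uv_i$ with $c$ and the other two $uv$-edges greedily leaves the bipartite remainder in the shape required by Lemma~\ref{lem:cherry-configuration-bipartite}. The gap is in your degenerate case, at the very step you flag as the main obstacle. The edge $uw_j$ is adjacent to \emph{seven} edges: the three $uv_i$, the two $v\text{-}w_j$ edges, and the two other $uw_{j'}$ edges. Your count of ``at most five forbidden colours'' ignores the last two, so after colouring everything else each $uw_j$ is only guaranteed $7-5=2$ remaining colours, and the three edges $uw_1,uw_2,uw_3$ are pairwise adjacent. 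Three mutually adjacent edges with $2$-element residual lists need not be colourable (Lemma~\ref{lem:trident} requires either a list of size $3$ or non-identical lists), and nothing in your choice of the $a_i$ and of the orientations is shown to prevent the three residual lists from coinciding: with pairwise disjoint $T_i$ the adversary can take $L(uw_j)$ to consist of the five colours your scheme forbids at $w_j$ together with a fixed pair $\{x,y\}$ disjoint from $T_1\cup T_2\cup T_3$, which defeats the plan as written. (Your 2-SAT analysis is also slightly off --- the ``all pairs equal'' pattern is in fact satisfiable by alternating around the $6$-cycle --- but that error is harmless; the counting at the $uw_j$ is not.)

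The paper escapes exactly this trap differently. After the generic reduction it shows further that one may assume $L(v_iu)\subseteq L(uw_j)$ for all $i,j$ (otherwise a colour of $L(v_2u)\setminus L(uw_1)$ starts a completion via Lemma~\ref{lem:ballon}), and then, since three $3$-sets cannot be pairwise disjoint inside a $7$-set, finds a colour $c\in L(v_1u)\cap L(v_2u)=L(v_1u)\cap L(v_2w_1)$ and puts $c$ on the two \emph{non-adjacent} edges $v_1u$ and $v_2w_1$. This reuse means $uw_1$ sees only four distinct colours among its five non-$uw$ neighbours, so it retains three available colours while $uw_2,uw_3$ retain two, and the star at $u$ can then be finished semi-greedily. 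To repair your argument you would need an analogous device --- some guarantee that at least one $uw_j$ ends up with three residual colours, or that the three residual pairs are not identical --- and that is precisely the content your proposal is missing.
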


\begin{proof} 
 If there is a colour $c \in \elgen{v_2}{{u}} \setminus \elgen{v_2}{w_1},$ colour
 $\egen{v_2}{{u}}$ with $c$ and colour the edges $\egen{v_1}{{u}}$ and $\egen{v_3}{{u}}$ semi-greedily. We can apply 
 Lemma~\ref{lem:cherry-configuration-bipartite} to colour the rest and get an $L$-edge-colouring. This and symmetry yield
 \begin{equation}
  \label{equ:cherry-configuration-two-max-deg-7-1}
  \elgen{v_2}{{u}} = \elgen{v_2}{w_1} = \elgen{v_2}{w_2}.
 \end{equation}
 If there is a colour $c \in \elgen{v_2}{{u}}$ that is not on $\elgen{{u}}{w_1}$ use it 
 to colour $\egen{v_2}{{u}}.$ If $c \in \elgen{{v_3}}{u},$ then we have $c \in \elgen{v_3}{w_2}$
 by \eqref{equ:cherry-configuration-two-max-deg-7-1} and symmetry. Colour $\egen{v_3}{w_2}$ with that colour. If
 $c \notin \elgen{{v_3}}{u}$ colour
 $\egen{v_3}{w_2}$ semi-greedily. In both cases the list of ${v_3}u$ retains at least two available colours and we continue 
 by colouring the edges $\egen{v_2}{w_2}$ and $\egen{v_2}{w_1}$ semi-greedily. Apply Lemma~\ref{lem:ballon} to
 colour the edges $\egen{v_1}{w_1},$ $\egen{v_1}{{u}},$ $\egen{v_1}{w_3},$ $\egen{{v_3}}{u}$ and $\egen{v_3}{w_3}$ and finish semi-greedily
 to get an $L$-edge-colouring. By this and symmetry we have
\begin{equation}
  \label{equ:cherry-configuration-two-max-deg-7-2}
  \elgen{v_2}{{u}} \subset (\elgen{{w_1}}{u} \cap \elgen{{u}}{w_2} \cap \elgen{{u}}{w_3}).
 \end{equation}
 By the size of the lists, \eqref{equ:cherry-configuration-two-max-deg-7-2} and symmetry, there is a colour 
 $c \in \elgen{v_2}{{u}}$ that is also in $\elgen{v_3}{{u}}$ or $\elgen{v_1}{{u}}.$ By symmetry we can assume the latter.
 By \eqref{equ:cherry-configuration-two-max-deg-7-1} we have also $c \in \elgen{v_2}{w_1}.$ Colour $\egen{v_1}{{u}}$ and $\egen{v_2}{w_1}$ with $c$ and colour
 the edges $\egen{v_1}{w_1},$ $\egen{v_1}{w_3}$ semi-greedily. Apply Lemma~\ref{lem:ballon} to colour the edges $\egen{v_3}{w_3},$ $\egen{v_3}{{u}},$ $\egen{{v_2}}{u},$
 $\egen{v_2}{w_2}$ and $\egen{v_3}{w_2}$ and finish semi-greedily to get an $L$-edge-colouring of $G$.

\end{proof}

\section{Proofs of the main results}
\label{sec:results}
 In this section we will combine the results of the two previous sections in order to give proofs of Theorem~\ref{thm:simple-results}
 and~\ref{thm:3-trees}.
 The size of a graph $G$ is $|V(G)|+|E(G)|$. $H$ is smaller than $G$ if its size is less than the size of $G$. 
 For a subset of vertices $W \subset V(G),$ we denote by $G\langle W \rangle$ the graph with vertex set $W \cup N(W)$ and edge set 
 $E(G) \setminus E(G-W)$. 
 
 Let $G$ be a graph with an assignment of lists 
 $L$ such that for a fixed $l \in \mathbb{N}$ each list $L(vw)$ has a size of at least $\max(l,\deg_G(v),\deg_G(w))$. Suppose 
 that for some proper 
 subset of vertices
 $W \subset V(G),$ we can find an $L$-edge-colouring $\mathcal{C}$ of the graph $G - W$. In order
 to extend $\mathcal{C}$ to an $L$-edge-colouring of $G$ we need find an $L^\mathcal{C}$-colouring of $G \langle W \rangle$.
 For an edge $w_1w_2 \in E(G)$ with $w_1,$ $w_2 \in W$ we have 
 \begin{equation}
 \label{equ:remaining-colours-1}
  |L^{\mathcal{C}}(w_1w_2)| = |L(w_1w_2)|.
 \end{equation}
 Since $\max(\deg_G(v),\deg_G(w),l) \geq \deg_G(v) = \deg_{G-W}(v) + \deg_{G\langle W \rangle}(v),$ we have for  
 an edge $vw \in E(G)$ with $w \in W$ and $v \in V(G) \setminus W$
 \begin{equation}
 \label{equ:remaining-colours-2}
  |L^{\mathcal{C}}(vw)|\geq |L(vw)| - \deg_{G-W}(v) \geq \deg_{G\langle W \rangle}(v)
 \end{equation} 
 Our proofs will be based on minimality. In most
 cases we want to prove for some subgraph-closed family of graphs and a fixed $l \in \mathbb{N}$
 that for each member $G$ with lists of colours $L(vw)$
  assigned to the edges
 of size at least $|L(vw)|\geq \max(l,\deg_G(v),\deg_G(w))$ 
 there is always an $L$-edge-colouring of $G$. 
 Suppose this does not hold for a graph $G$, but for all graphs that are smaller than $G$ and
 let $L$ be an assignment of lists of above described sizes for which there is no $L$-edge-colouring of $G$. Let $W \subset V(G)$ be some
 non-empty subset of vertices.
  As $\ch'(H) \leq \ch'(G)$ for every subgraph $H \subset G$, $G-W$ is smaller than $G$ and
 for each $v \in V(G-W)$ it holds that $\deg_{G-W}(v) \leq \deg_G(W),$ we can find
 an $L$-edge-colouring $\mathcal{C}$ of $G -W$. This yields
 immediately that $G$ has no isolated vertices and more interestingly for every edge $vw$ we have 
 \begin{equation}
  \label{equ:edge-degree}
  \deg(v)+\deg(w) \geq \max(l,\deg_G(v),\deg_G(w))+2.
 \end{equation} 
 Otherwise colour $G-vw$ by minimality and observe that $L(vw)$ retains at least one available colour by
 \eqref{equ:remaining-colours-2}, which contradicts the assumptions on $G$. 

\subsection{Proofs}
The next three lemmas imply Theorem~\ref{thm:simple-results}.
\begin{lemma}
 \label{lem:path-width-3-delta-6}
  Let $G$ be a graph of path-width at most 3 with an assignment of lists $L$ 
  to the edges of $G,$ such that each list $L(vw)$ has a size of at least
  $\max(6,\deg(v),\deg(w))$. Then $G$ has an $L$-edge-colouring.
\end{lemma}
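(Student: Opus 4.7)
The plan is to proceed by minimality. Suppose $G$ is a counterexample of smallest possible size $|V(G)|+|E(G)|$. Since path-width at most $3$ is subgraph-closed and the list-size condition is inherited, every proper subgraph of $G$ is $L$-edge-colourable. The preamble to this subsection then gives, by the standard ``delete an edge'' argument, that every edge $vw$ of $G$ satisfies $\deg(v)+\deg(w)\geq\max(6,\deg(v),\deg(w))+2$. Lemma~\ref{lem:conf-pw3md6} thus yields a $(3,6)$-substructure $(V,W,u)$ of $G$, either of type (i) with $|W|\geq 3$, or of type (ii) with $|W|=2$ and $G$ containing the configuration of Figure~\ref{fig:pw3md6-conf-2}.

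In type (i), I would pick any $3$-subset $W^*\subseteq W$. Since $N(W^*)\subseteq N(W)\subseteq V$ and the proof of Lemma~\ref{lem:conf-pw3md6} establishes $|N(W)|\leq 3$, the bipartite graph $G\langle W^*\rangle$ has biparts of size at most $3$ each, so Lemma~\ref{lem:K(3,3)_Y-choosable} produces a subset $W'\subseteq W^*$ for which the bipartite graph $G\langle W'\rangle$ is $N(W')$-choosable. Colouring $G-W'$ by minimality, \eqref{equ:remaining-colours-2} shows that the residual lists on $G\langle W'\rangle$ form an $N(W')$-dominated assignment, so the $N(W')$-choosability extends the colouring to all of $G$, contradicting that $G$ was a counterexample.

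In type (ii), I would instead colour the proper subgraph $G-W-\{uv_1,uv_2\}$ by minimality; the eight edges still uncoloured then form exactly the graph of Figure~\ref{fig:4-pyramid}. Because $\deg(u)=5$ and among $u$'s incident edges only the single edge $uv'$ has been pre-coloured, a direct application of \eqref{equ:remaining-colours-2} gives residual lists of size at least $5$ on each $uw_j$, at least $3$ on each $v_iw_j$, and at least $2$ on each $uv_i$. These are precisely the hypotheses of Lemma~\ref{lem:4-pyramid}, which finishes the colouring.

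The main obstacle here is the list-size bookkeeping in type (ii): the edges $uv_1$ and $uv_2$ must be deliberately held back from the first colouring step, for otherwise $u$ would have three pre-coloured incident edges and the lists on the $uw_j$ would shrink to size $3$, insufficient for Lemma~\ref{lem:4-pyramid}. Everything else is routine once the right substructure is located and the right pair of edges is deferred.
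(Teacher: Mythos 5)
Your proof is correct and follows the paper's argument essentially verbatim: minimal counterexample, the edge-degree inequality, the substructure from Lemma~\ref{lem:conf-pw3md6}, then Lemma~\ref{lem:K(3,3)_Y-choosable} in case (i) and Lemma~\ref{lem:4-pyramid} in case (ii). The only (harmless) difference is in case (ii): the paper deletes the vertex set $W\cup\{u\}$ and colours $uv'$ semi-greedily before invoking Lemma~\ref{lem:4-pyramid}, whereas you keep $u$ and withhold the two edges $uv_1,uv_2$ instead — both yield the same residual list sizes.
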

 \begin{proof}
 We will assume that the lemma is wrong and obtain a contradiction. So let $G$ be a smallest counterexample to the lemma with lists $L(vw)$ of size at
 least $\max(6,\deg(v),\deg(w))$ assigned to the edges, such that there is no $L$-edge-colouring of $G$.
 By \eqref{equ:edge-degree} $G$, has minimum degree 2 and a $(3,6)$-substructure $(V,W,{u})$ as stated in Lemma~\ref{lem:conf-pw3md6}.
 If $G$ has the substructure of \ref{itm:conf-pw3md6-1}
 of Lemma~\ref{lem:conf-pw3md6} and
 hence $|W| \geq 3,$ choose a subset $W_1 \subset W$ of size 3 and an $L$-edge-colouring $\mathcal{C}_1$ of
 $G - W_1$ by minimality. Inequality \eqref{equ:remaining-colours-2} asserts that the lists of 
 remaining colours of the graph
 $G\langle W_1 \rangle$ retain sizes big enough to apply Lemma~\ref{lem:K(3,3)_Y-choosable}, to extend 
 $\mathcal{C}_1$ to an $L$-edge-colouring of $G$.
 Thus we can assume that $G$ has the substructure of Lemma~\ref{lem:conf-pw3md6}\ref{itm:conf-pw3md6-2} (see Figure~\ref{fig:pw3md6-conf-2}). 
 Set $W_2 = W \cup \{{u}\}$ and use as before 
 the minimality of $G$ to find an $L$-edge-colouring 
 $\mathcal{C}_2$ of the graph $G - W_2$. By \eqref{equ:remaining-colours-1} and \eqref{equ:remaining-colours-2}, the
 lists of remaining colours of the graph $G\langle W_2 \rangle$ retain sizes big enough to colour ${u}v_3$ semi-greedily
   and then
 apply Lemma~\ref{lem:4-pyramid} to extend $\mathcal{C}_2$ to an $L$-edge-colouring of $G$. A contradiction.
 \end{proof}
\begin{lemma}
 \label{lem:tree-width-3-delta-7}
  Let $G$ be a graph of tree-width at most 3 with an assignment of lists $L$ to the edges of $G,$ such that each list $L(vw)$ has a size of at least
  $\max(7,\deg(v),\deg(w))$. Then $G$ has an $L$-edge-colouring.
 \end{lemma}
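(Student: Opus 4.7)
The plan is a minimal-counterexample argument that parallels the proof of Lemma~\ref{lem:path-width-3-delta-6}. I assume there is a counterexample and let $G$ be one of minimum size. Minimality and \eqref{equ:remaining-colours-2} force the edge-degree inequality \eqref{equ:edge-degree}, which places $G$ within the scope of Lemma~\ref{lem:conf-tw3md7}. Thus $G$ has a $(3,7)$-substructure $(V,W,u)$, and I split the argument according to whether we are in case~(i) ($|W|\ge 4$) or case~(ii) (one of the three subgraphs displayed in Figure~\ref{fig:tw3md7-conf}).

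In case~(i), I would fix any $W_1\subseteq W$ with $|W_1|=4$ and obtain, by minimality, an $L$-edge-colouring $\mathcal{C}_1$ of $G-W_1$. The bipartite graph $G\langle W_1\rangle$ with biparts $W_1$ and $N(W_1)\subseteq V$ satisfies $\deg_{G\langle W_1\rangle}(w)\le 3$ for every $w\in W_1$ by property~(c), and by \eqref{equ:remaining-colours-2} the remaining lists $L^{\mathcal{C}_1}$ are $N(W_1)$-dominated. The key observation is that property~(b) forces $u\in N(W_1)$ with $\deg_{G\langle W_1\rangle}(u)=4$, whereas no vertex on the $V$-side of the exceptional graph in Figure~\ref{fig:K44_exception} has degree $4$. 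Hence $G\langle W_1\rangle$ is not isomorphic to that exception, and Lemma~\ref{lem:K(4,4)_almost_Y-choosable} (possibly combined with Lemma~\ref{lem:K(3,3)_Y-choosable} on a remaining proper subset of $W_1$) extends $\mathcal{C}_1$ to an $L$-edge-colouring of $G$, contradicting the choice of $G$.

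In case~(ii), $|W|=3$ and $G$ contains one of the subgraphs of Figure~\ref{fig:tw3md7-conf}. For Figure~\ref{fig:tw3md7-conf-1} I would set $W_2=W$; since each $w\in W$ is adjacent to exactly the three vertices $u,v_1,v_2$, the graph $G\langle W_2\rangle$ is a $K_{3,3}$ and by \eqref{equ:remaining-colours-2} each remaining list has size at least $3$, which suffices by Theorem~\ref{thm:galvin} (equivalently Lemma~\ref{lem:K(3,3)_Y-choosable}). For Figures~\ref{fig:tw3md7-conf-2} and~\ref{fig:tw3md7-conf-3} I would instead set $W_2=W\cup\{u\}$ and, after colouring $G-W_2$ by minimality, use \eqref{equ:remaining-colours-1} and \eqref{equ:remaining-colours-2} to verify that the remaining list sizes on $G\langle W_2\rangle$ meet the lower bounds in Figures~\ref{fig:cherry-configuration-one-max-deg-7} and~\ref{fig:cherry-configuration-two-max-deg-7}; Lemmas~\ref{lem:cherry-configuration-one-max-deg-7} and~\ref{lem:cherry-configuration-two-max-deg-7}, respectively, then complete the extension and again contradict minimality.

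The step I expect to be the main obstacle is case~(i): Lemma~\ref{lem:K(4,4)_almost_Y-choosable} fails on precisely one bipartite graph, so the whole argument hinges on using the structural property~(b) of the substructure to produce a degree-$4$ vertex on the $V$-side of $G\langle W_1\rangle$ and thereby rule the exception out. Once this observation is in place, the other pieces reduce to list-size bookkeeping against the instance lemmas from Section~\ref{sec:instances}.
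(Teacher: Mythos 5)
Your proposal is correct and follows essentially the same route as the paper: a minimal counterexample, the edge-degree inequality \eqref{equ:edge-degree}, the substructure from Lemma~\ref{lem:conf-tw3md7}, and then Lemma~\ref{lem:K(4,4)_almost_Y-choosable} in case~(i) and Theorem~\ref{thm:galvin} resp.\ Lemmas~\ref{lem:cherry-configuration-one-max-deg-7} and~\ref{lem:cherry-configuration-two-max-deg-7} in case~(ii). Your explicit observation that property~(b) gives $u$ degree $4$ in $G\langle W_1\rangle$ and hence excludes the exceptional graph of Figure~\ref{fig:K44_exception} is a detail the paper leaves implicit, and it is worth stating.
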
 
 \begin{proof}
  We will assume that the lemma is wrong and obtain a contradiction. So let $G$ be a smallest counterexample to the 
  lemma with lists $L(vw)$ of size at
  least $\max(7,\deg(v),\deg(w))$ assigned to the edges, such that there is no $L$-edge-colouring of $G$.
  We can assume that $G$ is connected and non-empty. 
  By \eqref{equ:edge-degree}, $G$ has minimum degree 2 and a $(3,7)$-substructure $(V,W,{u})$ as stated in Lemma~\ref{lem:conf-tw3md7}.
 If $G$ has the substructure of Lemma~\ref{lem:conf-tw3md7}\ref{itm:conf-tw3md7-1} and hence $|W| \geq 4,$ choose a subset $W_1 \subset W$ of 
 size 4. By minimality we can find an $L$-edge-colouring $\mathcal{C}_1$ of the graph
 $G - W_1$. Inequality \eqref{equ:remaining-colours-2} asserts that the size of the lists of remaining colours for the graphs
 $G\langle W_1 \rangle$ are big enough to apply Lemma~\ref{lem:K(4,4)_almost_Y-choosable}, to extend 
 $\mathcal{C}_1$ to an $L$-edge-colouring of $G$. 
 Thus we can assume that $G$ has one of the substructures of Lemma~\ref{lem:conf-tw3md7}\ref{itm:conf-tw3md7-1}.
 Set $W_2 = W \cup \{{u}\}$ and use as before the minimality of $G$ to find an $L$-edge-colouring 
 $\mathcal{C}_2$ of the graph $G - W_2$. By \eqref{equ:remaining-colours-1} and \eqref{equ:remaining-colours-2}, the
 lists of remaining colours of the graph $G\langle W_2 \rangle$ retain sizes big enough to
 apply Lemma~\ref{lem:K(3,3)_Y-choosable},~\ref{lem:cherry-configuration-one-max-deg-7} 
 or~\ref{lem:cherry-configuration-two-max-deg-7} respectively to extend $\mathcal{C}_2$ to an $L$-edge-colouring of $G$. A contradiction.
 \end{proof}

  We get Theorem~\ref{thm:3-trees} as a corollary.

\begin{proof}[Proof of Theorem~\ref{thm:3-trees}]
 Let $G$ be a 3-tree.
 If ${\Delta(G) \geq 7}$ 
 we have $\ch'(G) = \chi'(G)$ by Lemma~\ref{lem:tree-width-3-delta-7}.
 If $\Delta(G) \leq 6,$ then by Lemma~\ref{lem:small-3-trees} 
 $G$ has either path-width 3 or is isomorphic to one of the graphs shown in 
 Figure~\ref{fig:small-3-trees}. In the first case we can apply Lemma~\ref{lem:path-width-3-delta-6} and otherwise we are done by 
 Lemma~\ref{lem:small_graphs_of_tree-width_three}.
\end{proof}

\begin{lemma}
\label{lem:pw4md11}
  Let $G$ be a graph of tree-width at most 4 with an assignment of lists $L$ to the edges of $G,$ such that each list $L(vw)$ has a size of at least
  $\max(10,\deg(v),\deg(w))$. Then $G$ has an $L$-edge-colouring.
\end{lemma}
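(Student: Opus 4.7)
The plan is to argue by contradiction, taking a smallest counterexample $G$. The standard deletion argument leading to \eqref{equ:edge-degree} ensures every edge of $G$ satisfies the hypothesis of Lemma~\ref{lem:conf-pw4md10}, so $G$ contains a $(4,10)$-substructure $(V,W,u)$ together with a subset $W'\subset W$ of size $4$ with $|N(W')|\le 4$, falling into one of the four cases \emph{i)}--\emph{iv)} of that lemma.

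For cases \emph{i)}--\emph{iii)} the natural move is to delete $W'$, use minimality to obtain an $L$-edge-colouring $\mathcal{C}$ of $G-W'$, and extend $\mathcal{C}$ across the bipartite graph $G\langle W'\rangle$ using the lemmas of Section~\ref{sec:intances-bip}: inequalities \eqref{equ:remaining-colours-1}--\eqref{equ:remaining-colours-2} guarantee that the remaining lists have size at least $\deg_{G\langle W'\rangle}(v)$ at each edge, which is exactly what those lemmas demand. Concretely, case \emph{i)} reduces to Lemma~\ref{lem:k44-min-deg-3} (all vertices of $W'$ have degree at least $3$ and are joined to $u$), while cases \emph{ii)} and \emph{iii)} reduce to Lemma~\ref{lem:K(4,4)_almost_Y-choosable}. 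The latter lemma carries an exception when $G\langle W'\rangle$ is isomorphic to the graph of Figure~\ref{fig:K44_exception}; this is incompatible with case \emph{iii)} since the three degree-$2$ vertices of the exceptional graph have pairwise distinct neighbourhoods, and when it arises in case \emph{ii)} it is to be treated by the same auxiliary-graph technique I now outline for case \emph{iv)}.

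The main case is \emph{iv)}, where $G\langle W'\rangle$ is exactly the graph of Figure~\ref{fig:pw4-ins-1}. Here an arbitrary choice of $\mathcal{C}$ on $G-W'$ will not do: Lemma~\ref{lem:pw4-ins} asks in addition that the lists of remaining colours on the four edges $v_jw_1$ admit a list-colouring of an auxiliary $K_{2,4}$. My plan is to construct an auxiliary graph $G^*$ from $G-W'$ by adjoining two fresh vertices $p_1,p_2$, each joined to $v_1,v_2,v_3$ and $u$. Using the path-width decomposition produced by Lemma~\ref{lem:tree-width-k-configurations}, whose terminal bag contains $V$, I would verify that $G^*$ has tree-width $3$ and maximum degree at most~$\Delta$. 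Since $\Delta\ge 10 \ge 2\cdot 3$, Theorem~\ref{thm:nishizeki} provides a proper $\Delta$-edge-colouring $\mathcal{C}^*$ of $G^*$. Its restriction to $E(G-W')$ furnishes the $\mathcal{C}$ I need, while the colours on the eight new edges $p_iv_j$ and $p_iu$ supply, via the identification $L^1(p_iq_j)=L^{\mathcal{C}}(v_jw_1)$, the $K_{2,4}$-colouring required by Lemma~\ref{lem:pw4-ins}. That lemma then extends $\mathcal{C}$ to an $L$-edge-colouring of $G$, contradicting minimality.

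The hardest step to get right is the construction of $G^*$: one must simultaneously preserve tree-width $3$, keep every existing vertex at degree at most $\Delta$, and arrange the new edges so that the list identifications of Lemma~\ref{lem:pw4-ins} line up with what Theorem~\ref{thm:nishizeki} delivers. Verifying these three conditions at once, together with the parallel construction that disposes of the exceptional graph of Figure~\ref{fig:K44_exception} in case \emph{ii)}, is the technical heart of the argument; once it is in place, every other piece has already been prepared in Sections~\ref{sec:configurations} and~\ref{sec:instances}.
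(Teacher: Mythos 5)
Your skeleton matches the paper's (minimal counterexample, the four cases of Lemma~\ref{lem:conf-pw4md10}, and an auxiliary graph $G^*$ with two new vertices $p_1,p_2$ joined to $V$ for case \emph{iv)}), but the key step of case \emph{iv)} is wrong as written. Theorem~\ref{thm:nishizeki} only yields a \emph{proper} $\Delta$-edge-colouring of $G^*$ from an abstract palette $\{1,\dots,\Delta\}$; its restriction to $E(G-W')$ is not an $L$-edge-colouring (the colours need not lie in the lists $L(e)$), and the colours it places on the edges $p_iv_j$, $p_iu$ need not lie in, let alone remain available in, the lists $L(v_jw_1)$ that Lemma~\ref{lem:pw4-ins} requires via $L^1(p_iq_j)=L^{\mathcal C}(v_jw_1)$. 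What is actually needed is a \emph{list} colouring of $G^*$: one assigns lists $L^*$ to $G^*$ with $L^*(e)=L(e)$ on $E(G_1)$, $L^*(up_j)=L(uw_1)$ and $L^*(v_ip_j)=L(v_iw_1)$, checks that $G^*$ is strictly smaller than $G$, still has path-width at most $4$, that no degree increases and that $\deg_{G^*}(p_j)=4\le 10$, and then invokes the minimality of the counterexample to get an $L^*$-edge-colouring $\mathcal C^*$. That is the role of the induction here, not of Theorem~\ref{thm:nishizeki}. Your side claim that $G^*$ has tree-width $3$ is also unfounded: $G^*$ contains $G-W_1$, which can already have path-width $4$.

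Two smaller points. In case \emph{iii)} you cannot simply invoke Lemma~\ref{lem:K(4,4)_almost_Y-choosable}: that lemma requires $\deg(w)\le 3$ for \emph{every} $w\in W'$, and case \emph{iii)} only guarantees two degree-$2$ twins; the remaining vertices of $W'$ may have degree $4$. The clean treatment is to delete only the two twins $w_1,w_2$ and extend over the $4$-cycle $G\langle\{w_1,w_2\}\rangle$ by Theorem~\ref{thm:galvin}. Finally, your detour for the exceptional graph of Figure~\ref{fig:K44_exception} in case \emph{ii)} is unnecessary: by Definition~\ref{def:substructure}\ref{itm:2} every vertex of $W'$ is adjacent to $u$, whereas in the exceptional graph no vertex of $V$ is adjacent to all of $W$, so the exception cannot occur there.
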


\begin{proof}
 We will assume that the lemma is wrong and obtain a contradiction. So let $G$ be a 
smallest counterexample to the lemma with lists $L(vw)$ of size at
  least $\max(10,\deg(v),\deg(w))$ assigned to the edges, such that there is no $L$-edge-colouring of $G$.
  We can assume that $G$ is connected and non-empty. 
  By \eqref{equ:edge-degree} $G$ has minimum degree 2 and a $(4,10)$-substructure $(V,W,{u})$ with a dedicated subset $W_1 \subset W$ of size 4
  as stated in Lemma~\ref{lem:conf-tw3md7}.
  If $G$ has the substructure of Lemma~\ref{lem:conf-pw4md10}\ref{itm:pw4-1}  
  and thus each element of $W_1$ has a degree of at least 3,
 pick an $L$-edge-colouring of the graph $G \setminus W_1$ by minimality of $G$. By \eqref{equ:remaining-colours-1} the
 size of the lists of
 remaining colours retain sizes big enough to find an $L$-edge-colouring of the graph $G \langle W_1 \rangle$ 
 by applying Lemma~\ref{lem:k44-min-deg-3}. If $G$ has the substructure of Lemma~\ref{lem:conf-pw4md10}\ref{itm:pw4-2} and hence each
 vertex of $W_1$ has a degree of at most
 3, we can find an $L$-edge-colouring of $G$ as seen in the proof of Lemma~\ref{lem:tree-width-3-delta-7} with 
 Lemma~\ref{lem:K(4,4)_almost_Y-choosable}. If $G$ has the substructure of Lemma~\ref{lem:conf-pw4md10}\ref{itm:pw4-3} and therefore 
  two vertices $w_1,$ $w_2 \in W_1$ of
 degree 2 have the same neighbourhood, we can find an $L$-edge-colouring of $G- w_1-w_2$ by minimality and extend this to
 an $L$-edge-colouring of $G$ by applying Theorem~\ref{thm:galvin} to the graph $G \langle \{w_1,w_2\} \rangle$ with lists
 of remaining colours.
 So we can assume that $G$ has the substructure of Lemma~\ref{lem:conf-pw4md10}\ref{itm:pw4-4} and by consequence
 $W_1$ contains exactly one vertex $w_1$ of degree 4 and three vertices $w_2,$ $w_3$ and $w_4$ 
 of degree 2 with pairwise distinct neighbourhoods. Let $V = \{v_1,v_2,v_3,u\}$ as shown
 in Figure~\ref{fig:pw4-4222}.
 Denote by $G_1$ the graph $G - W_1$ and let $G^*$ be the graph obtained from $G_1$ by adding two new vertices $p_1,$ $p_2$
 and connecting both to each vertex of $V $. Observe that $\Delta(G^*) \leq \Delta(G)$ and 
 $|V(G^*)|+|E(G^*)|<|V(G)|+|E(G)|$. Further $G^*$ has path-width 4. Lemma~\ref{lem:conf-pw4md10} provides a width 
 4 path decomposition
 of $G_1$ where $V = V_t$ for some vertex $t$ of the associated tree. We can extend this to a width 4 path decomposition
 of $G^*$ as in the proof of Lemma~\ref{lem:tree-width-k-configurations}. Let $L^*$ be an assignment of lists to the edges of $G^*$ with
 \begin{itemize}
  \item $L^*(e) = L(e)$ if $e \in E(G_1),$
  \item $L^*(up_j) = L(uw_1)$ and
  \item $L^*(v_ip_j) = L(v_iw_1)$
 \end{itemize}
  for $1 \leq j \leq 2$ and $1 \leq i \leq 3$. By minimality there is an $L^*$-colouring $\mathcal{C}^*$ of $G^*$.\footnote{At this point 
 it is important that $\ch'(G^*) \leq ch'(G)$ as it was discussed briefly in Section~\ref{sec:introduction}.}
 We can extract an $L$-edge-colouring $\mathcal{C}$ 
 of the graph $G_1$ by setting $\mathcal{C}(e) = \mathcal{C}^*(e)$ for each edge $e \in E(G_1)$. As the
 graph $G \langle W_1 \rangle$ with the lists of remaining colours $L^{\mathcal{C}}$ fulfils the conditions of Lemma 
 ~\ref{lem:pw4-ins}, we can extend $\mathcal{C}$ to an $L$-edge-colouring of $G$. A contradiction.
\end{proof}

  \subsection{Remarks}
\begin{figure}
\centering
\tikzstyle{vertex}=[circle,draw,minimum size=15pt,inner sep=0pt]
\tikzstyle{edge} = [draw,-]
\tikzstyle{weight} = [font=\small]
\begin{subfigure}[b]{0.3\textwidth}\centering
 \begin{tikzpicture}[scale=1]
    \foreach \pos/\name in {{(0,0)/v_1}, {(1,0)/v_2}, {(2,0)/v_3},{(3,0)/v_4},
                            {(0,2)/w_1}, {(1,2)/w_2}, {(2,2)/w_3},{(3,2)/w_4}}
       \node[vertex, align=center] (\name) at \pos {$\name$};
    \foreach \source/ \dest /\weight in {v_1/w_1/2, v_1/w_2/2,  v_1/w_3/2,
                                         v_2/w_1/2, v_2/w_3/2, v_2/w_2/2,
                                         v_3/w_1/2, v_3/w_4/2, v_3/w_3/2, v_3/w_2/2,
                                         v_4/w_2/3, v_4/w_3/3,v_4/w_4/3, v_4/w_1/3}
       \path[edge] (\source) -- node[weight] {} (\dest);
 \end{tikzpicture}
 \caption{}
 \label{fig:pw4-concl-1}
\end{subfigure}
\begin{subfigure}[b]{0.3\textwidth}\centering
 \begin{tikzpicture}[scale=1]
    \foreach \pos/\name in {{(0,0)/v_1}, {(1,0)/v_2}, {(2,0)/v_3},{(3,0)/v_4},
                            {(0,2)/w_1}, {(1,2)/w_2}, {(2,2)/w_3},{(3,2)/w_4}}
       \node[vertex, align=center] (\name) at \pos {$\name$};
    \foreach \source/ \dest /\weight in {v_1/w_1/2, v_1/w_2/2, w_1/v_4/4,
                                         v_2/w_1/2, v_2/w_3/2, 
                                         v_3/w_1/2, v_3/w_4/2,
                                         v_4/w_2/43, v_4/w_3/4,v_4/w_4/4}C
       \path[edge] (\source) -- node[weight] {} (\dest);
 \end{tikzpicture}
 \caption{}
 \label{fig:pw4-concl-2}
\end{subfigure}
\caption{Two bipartite graph that are not $\{v_1, v_2, v_3, v_4\}$-choosable.}
\label{fig:pw4-problem}
\end{figure}
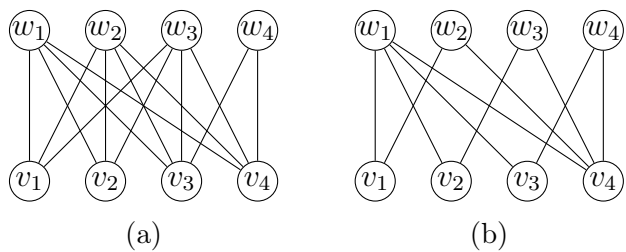

  The case of tree-width 3 and maximum degree 6 has been studied, but not resolved. Partial results can be found in~\cite{Lan12}.
  It would be nice to have
  more general versions of the lemmas concerning bipartite substructures in section~\ref{sec:intances-bip}. 
  These could be used in combination with Lemma~\ref{lem:tree-width-k-configurations}
  to colour graphs where the maximum degree is some linear function of the tree-width. Of course there may occur 
  substructures that do not permit edge-colouring from the lists of remaining colours. For
  example the graphs shown in Figure~\ref{fig:pw4-concl-1} and~\ref{fig:pw4-concl-2} are not $V$-choosable, while they do appear
  as substructures of graphs of path-width 4
  and maximum degree 10. But we can overcome these obstacles, by further analysis of the graph structure as seen in 
  Lemma~\ref{lem:conf-pw4md10} and using refined methods to colour the substructures as explained in 
  Lemma~\ref{lem:pw4-ins}.

\section*{Acknowledgement}
The author would like to thank Henning Bruhn for guidance and helpful remarks and Kristijan Radakovic for 
offering the gallery Nouvel Organon as work space.

\end{document}